\renewcommand{\div}{\operatorname{div}}
{\newtheorem{thm}{Theorem}[section]}
{\newtheorem{prop}[thm]{Proposition}}
{}
{\newtheorem{lemme}[thm]{Lemma}}
{\newtheorem{defi}[thm]{Definition}}
{\newtheorem{rem}{Remark}[section]}
\newcommand{\Lip}{\text{Lip}}
\newcommand{\LLip}{\text{Log-Lip}}
\newcommand{\supp}{\text{supp} }
\newcommand{\aux}{\text{aux}}
\providecommand{\R}{\mathbb{R}}
\providecommand{\N}{\mathbb{N}}
\renewcommand{\leq}{\leqslant}
\renewcommand{\geq}{\geqslant}
\renewcommand{\div}{\operatorname{div}}
\newcommand{\Id}{\operatorname{Id}}
\newcommand{\loc}{\operatorname{loc}}
\newcommand{\eps}{\epsilon}
\newcommand{\Addresses}{{
  \bigskip
  \footnotesize

\noindent  A.~Mecherbet, \textsc{Institut de Mathématiques de Jussieu-Paris Rive Gauche, Université Paris Cité,
    8~Place Aurélie Nemours,
F75205 Paris Cedex~13, France}\par\nopagebreak
\noindent  \textit{E-mail address: }\texttt{mecherbet@imj-prg.fr}

  \medskip

\noindent  F.~Sueur, \textsc{  Institut de Math\'ematiques de Bordeaux, UMR CNRS 5251,Universit\'e de Bordeaux, 351~cours
de la Lib\'eration, F33405 Talence Cedex, France   $\&$ Institut  Universitaire de France}\par\nopagebreak 
\noindent  \textit{E-mail address:} \texttt{Franck.Sueur@math.u-bordeaux.fr}

}}
\title{A few remarks on the transport-Stokes system}
\author{Amina Mecherbet and Franck Sueur}
\date{\today}
\begin{document}

\maketitle

\abstract{We consider the so-called transport-Stokes system which describes sedimentation of inertialess suspensions in a viscous flow and couples a transport equation and the steady Stokes equations in the full three-dimensional space. 
 First we present a global existence and uniqueness result for $L^1 \cap L^p$ initial densities where $p \geq 3$.
Secondly, we prove that, in the case where $p>3$, 
the flow map which describes the trajectories of these solutions is analytic with respect to time. 
Finally we establish the   small-time global  exact   controllability of    the transport-Stokes system.
These results  extend to the transport-Stokes system some results obtained for the incompressible Euler system respectively by Yudovich in \cite{Yudovich63}, by Chemin in \cite{Chemin92,Chemin} and by Coron, and Glass, in \cite{Coron96,G20}.  }

\tableofcontents

\section{Introduction and earlier results}
We consider the following transport-Stokes  system:
\begin{equation}
\label{TS}
\left\{
\begin{array}{rcll}
\partial_t \rho+{\div}( u \rho ) &=& 0\,,& \text{on $\mathbb{R}^+ \times \mathbb{R}^3,$}\\
- \Delta u+ \nabla q &=& - \rho e_3 \,,& \text{on $\mathbb{R}^+ \times \mathbb{R}^3$},\\
\div u & =&0\,,& \text{on $\mathbb{R}^+ \times \mathbb{R}^3$},\\
\underset{|x|\to \infty}{\lim}|u| &=& 0,& \text{on } \mathbb{R}^+,\\
\rho(0,\cdot) &= & \rho_0 \,,&\text{on $ \mathbb{R}^3$}.
\end{array}
\right.
\end{equation}
The system \eqref{TS} has been derived in \cite{Hofer, Mecherbet} as a model for the sedimentation of a cloud of rigid particles in a viscous fluid in the case where inertia of both fluid and particles is neglected. Let us also mention that, recently,  some systems coupling a transport equation and the Navier-Stokes equations have been obtained as 
hydrodynamic limits of the Vlasov-Navier-Stokes system in \cite{HKM}.

Existence and uniqueness of solutions to \eqref{TS} 
has been proved in \cite{Hofer} for regular initial data density, and extended to the case where the initial density is in $L^1(\R^3) \cap L^\infty(\R^3)$  with finite first moment  in   \cite{Mecherbet2}. A similar result, without the assumption on the finite first moment, has been proved in the parallel contribution \cite{Hofer&Schubert}. In the former paper, the authors also present a well-posedness result for a transport-Stokes like model taking into account the correction to the effective viscosity, the initial probability density is assumed to be $W^{1,\infty}(\R^3)\cap W^{1,1}(\R^3) $ in this case. In the paper \cite{Leblond}, the author proves global existence and uniqueness for $L^\infty(\R^3)$ initial densities in the case of bounded domain in $\R^3$ and $\R^2$ and also in the case of an infinite strip $\Omega= (0,1)\times \R$ with a flux condition.  We refer also to the recent paper \cite{GrayerII} where the author proves global existence and uniqueness in $\R^2$ for $L^1(\R^2)\cap L^\infty(\R^2)$ compactly supported initial densities as well as propagation of H\"older regularity of the boundary of the patch.

Let us finally mention the work in progress \cite{Cobb} regarding the 
wellposedness of the transport-fractional Stokes system. 

Let us emphasize that the transport-Stokes system can be seen as a particular case of a large family of transport equations with a velocity given as the convolution of the density $\rho$ with a kernel $K$. The main key in order to ensure uniqueness is to establish suitable stability estimates. The first result relying on the 1-Wasserstein distance was obtained by Dobrushin \cite{Dobrushin} in the case where $\rho \in \mathcal{P}_1(\R^3)$ and $K\in Lip(\R^3,\R^3)$. This result has been generalized since for more singular kernels but for which, in counterpart, the probability density $\rho$ lies in $L^\infty(\R^3)$; we refer to the stability result by Hauray \cite{Hauray} in the case where the singular kernel (at origin) satisfies a compatibility condition that mainly ensures local integrability of $K$, $\nabla K$. This result has been adapted from the 2-Wasserstein stability estimate obtained by Loeper \cite{Loeper} for the Vlasov-Poisson system with $L^\infty$ probability measures. We finally refer to the paper by Duerinckx and Serfaty \cite{Serfaty&Duerinckx} where $K$ is the gradient of Coulomb, logarithmic or Riesz kernel. Authors prove the mean field convergence for a large system of interacting particles by introducing a suitable distance built as Coulomb (or Riesz) metric which overcomes the stability issue encountered when considering the Coulomb case with Wasserstein metric.

\section{Statement of the main  results}

\subsection{Global existence and uniqueness result for \texorpdfstring{$L^1 \cap L^p$}{L1 n Lp} initial densities  where \texorpdfstring{$p \geq 3$}{p>=3}.}
In this section, we state the first main result of this paper regarding the global existence and uniqueness of solutions to the 
transport-stokes system with lower regularity than in the previous results, that is  for $L^1 \cap L^p$ initial densities where $p \geq 3$.

Let us fix a notation before to state precisely the statement. 
For $p\geq 1$, let  $E_p$ be the subspace of probability measures
in $ L^p (\R^3) $.
In addition to the global existence and uniqueness issue, next result also establishes the validity of some well-known  conservation laws and the existence of a flow map in a classical sense. 
\begin{thm} \label{thm0}
Let $p\geq 3$ and $\rho_0$ in $E_p$. Then there is a unique corresponding solution 
 $$(u,q,\rho)\in
 \left \{ 
 \begin{array}{lcl}
 C([0,+\infty),W^{2,p}  (\R^3) \times W^{1,p}  (\R^3)\times  E_p) & \text {if} & p>3 ,\\ C([0,+\infty),\dot{W}^{2,3}  (\R^3) \cap \dot{W}^{1,3}  (\R^3)\cap  \underset{q\in(3,+\infty]}{\bigcap} W^{1,q}(\R^3) \times W^{1,3}  (\R^3)\times  E_p) & \text {if} & p=3,
 \end{array}
 \right. $$ 
 of the transport-Stokes equation \eqref{TS} in the sense of distributions, the density function satisfies the following conservation law: for any $t\geq 0$,
\begin{eqnarray}\label{conserLp}
\|\rho(t)\|_{ L^1 (\R^3) } =\|\rho_0\|_{ L^1 (\R^3) }  \,   \text{  and } \, \|\rho(t)\|_{L^p} =\|\rho_0\|_{ L^p}  .
\end{eqnarray}
The velocity and pressure $(u,q)$ satisfy for $p>3$ and all $t\geq 0$
\begin{equation}\label{reg_u,p}
\|u(t)\|_{W^{2,p}(\R^3)}+ \|q(t)\|_{W^{1,p}(\R^3)} \leq C_p \| \rho(t) \|_{L^1(\R^3)\cap L^p(\R^3)},
\end{equation}
and for $p=3$, $q\in(3,+\infty]$
\begin{equation}\label{reg_u,p_bis}
\|\nabla u(t)\|_{W^{1,3}(\R^3)}+\|u(t)\|_{W^{1,q}(\R^3)}+ \|q(t)\|_{W^{1,3}(\R^3)} \leq C_q \| \rho(t) \|_{L^1(\R^3)\cap L^3(\R^3)}.
\end{equation}
Moreover,
\begin{equation}\label{ulipoullip}
    u \in 
     \begin{cases} 
C([0,+\infty),  C^{1,\mu} (\R^3),   &  \text{  if } \, p > 3,
      \\  C([0,+\infty), \LLip (\R^3) ),   &  \text{  if } \, p = 3.
\end{cases}
\end{equation}
where $\mu=1-3/p$,
see Definition \ref{def:LLspace} for more details on Log-Lipschitz spaces. Finally, for all $s\in[0,+\infty)$, there exists a unique 
\begin{equation}\label{uflox}
    X(\cdot,s,\cdot)-\Id \in 
     \begin{cases} 
C([0,+\infty),  C^{1,\mu} (\R^3) ),   &  \text{  if } \, p > 3,
      \\  {C([0,+\infty), C^{0,r_t} (\R^3) ),}   &  \text{  if } \, p =3, 
\end{cases}
\end{equation}
where $r_t = e^{-Ct} $, with $C>0$ a constant depending only on $\|\rho_0\|_{L^1(\R^3)\cap L^3(\R^3)}$,
such that 
\begin{equation} \label{cflow-def}
\left \{
\begin{array}{rcll}
\partial_t Xt,s,x) & = & u(t, X(t,s,x)),&\: \: \forall \, t,s\in[0,+\infty) , \\
X(s,s,x) & = & x,& \: \: \forall \, s \in [0,+\infty) ,
\end{array}
\right.
\end{equation}
and for any $t$ in $[0,+\infty)$, 
\begin{equation}
    \label{pousse-th}
    \rho(t,\cdot) =X(t,0,\cdot) \#\rho_0 .
    \end{equation}
 \end{thm}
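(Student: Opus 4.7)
The plan is to reconstruct the velocity $u$ from $\rho$ via the Oseen tensor, to establish the sharp regularity of this linear map in the $L^1\cap L^p$ scale, to build solutions by regularisation, and to use the associated flow map to transfer existence, conservation and uniqueness back and forth between $\rho$ and $X$.

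\textbf{Step 1: Velocity regularity.} Writing $u=\mathcal{U}\ast(-\rho e_3)$ with the Oseen tensor $\mathcal{U}(x)=\frac{1}{8\pi}\bigl(\frac{\Id}{|x|}+\frac{x\otimes x}{|x|^3}\bigr)$, the kernels $\nabla\mathcal{U}$ and $\nabla^2\mathcal{U}$ are respectively weakly singular of homogeneity $-2$ and of Calderón--Zygmund type. Combining Hardy--Littlewood--Sobolev, Calderón--Zygmund and Sobolev embeddings on $L^1\cap L^p$ yields \eqref{reg_u,p} for $p>3$ and \eqref{reg_u,p_bis} for $p=3$; Morrey's embedding then provides $u\in C^{1,1-3/p}$ in \eqref{ulipoullip} when $p>3$. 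The log-Lipschitz bound when $p=3$ follows from the classical dyadic splitting of $u(x)-u(y)$ into a near part controlled by $\|\nabla u\|_{L^q}$ for $q$ large and a far part controlled by $\|u\|_{L^\infty}$, with the cut-off optimised in terms of $|x-y|$, which turns the failure of the $L^\infty$ bound on $\nabla u$ into a logarithmic loss.

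\textbf{Step 2: Construction and conservation.} Mollify $\rho_0$ into $\rho_0^\epsilon\in C^\infty_c(\R^3)$ and apply the regular theory \cite{Hofer} to produce smooth solutions $(u^\epsilon,q^\epsilon,\rho^\epsilon)$ together with their classical flows $X^\epsilon$. Since $u^\epsilon$ is divergence-free, the transport equation preserves every $L^q$ norm, so that \eqref{conserLp} holds along the approximation. Combined with Step~1, this provides uniform-in-$\epsilon$ bounds on $u^\epsilon$ in the function spaces of the statement and a uniform Lipschitz (respectively log-Lipschitz) modulus of continuity. A standard compactness argument of Aubin--Lions type yields a limit $(u,q,\rho)$ satisfying the claimed regularity and conservation laws.

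\textbf{Step 3: Flow and uniqueness.} The Lipschitz ($p>3$) or log-Lipschitz ($p=3$) regularity of $u$ enables Cauchy--Lipschitz, respectively Osgood's theorem in the spirit of Chemin \cite{Chemin92}, to produce a unique flow $X$ solving \eqref{cflow-def} with the announced regularity; the Hölder exponent $r_t=e^{-Ct}$ in the critical case stems from the standard Osgood estimate on the rate of separation of trajectories driven by a log-Lipschitz field. The push-forward identity \eqref{pousse-th} then follows because both $\rho$ and $X(t,0,\cdot)\#\rho_0$ solve the linear transport equation associated with the same divergence-free velocity $u$ and agree at $t=0$. Finally, uniqueness of $(u,q,\rho)$ reduces to a stability estimate: given two solutions with the same initial datum, one bounds the $1$-Wasserstein distance between the densities using the continuity of $\rho\mapsto u$ supplied by Step~1 together with the (log-)Lipschitz modulus of $u$, and closes by Gronwall when $p>3$ or by Osgood when $p=3$. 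The main obstacle is precisely this critical case $p=3$, where the velocity is only log-Lipschitz, the flow map deteriorates in Hölder regularity as $t$ grows, and the Osgood argument must be substituted for Gronwall throughout, following the Yudovich--Chemin strategy adapted from the Euler vorticity formulation to the Oseen convolution kernel.
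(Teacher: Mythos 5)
Your proposal is correct in outline and reaches the same uniqueness argument as the paper (the $W_1$ stability estimate closed by Gronwall for $p>3$ and by Osgood for $p=3$, i.e.\ Theorem~\ref{thm}), and your Step~1 on the Stokes regularity coincides with Propositions~\ref{class-Stokes} and~\ref{prop_log_lip_u}. The genuine divergence is in the construction of the solution: you propose to mollify $\rho_0$, invoke the smooth theory of \cite{Hofer}, and pass to the limit by an Aubin--Lions type compactness argument, which is exactly the ``density argument'' the paper acknowledges as viable but deliberately sets aside in favour of a self-contained Picard iteration $(u^n,\rho^n)\mapsto(u^{n+1},\rho^{n+1})$ (solve Stokes with data $\rho^n$, then transport $\rho_0$ by $u^n$). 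The paper's iteration has the advantage of feeding the same Wasserstein stability estimates used for uniqueness directly into the proof that $(\rho^n)$ is Cauchy in $C([0,T],\mathcal{P}(\R^3))$, so existence and uniqueness are treated by a single mechanism and no external smooth well-posedness theory is needed; it also sidesteps the delicate point in your scheme of passing the conservation laws \eqref{conserLp} to the limit (mere weak-$*$ compactness would only give lower semicontinuity of $\|\rho(t)\|_{L^q}$ -- you must recover the equalities a posteriori through the push-forward identity \eqref{pousse-th}, as you do in Step~3). Conversely, your route is shorter modulo the cited regular theory, and its compactness structure is closer to the standard vanishing-viscosity/regularisation template. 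One small point to tighten in your Step~3: when $u$ is only log-Lipschitz ($p=3$), identifying $\rho$ with $X(t,0,\cdot)\#\rho_0$ by ``uniqueness of the linear transport equation'' needs justification beyond Cauchy--Lipschitz; the clean way, which the paper follows, is to prove the identity at the level of the approximations (where the flow is classical) and pass to the limit using the uniform H\"older bound $|X(t,s,x)-X(t,s,y)|\leq \bar C|x-y|^{e^{-CT}}$ to get strong local convergence of the flows via Ascoli.
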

 In \eqref{pousse-th} the notation 
$\#$ stands for the push-forward of the measure which follows the symbol  by the mapping which precedes the symbol.

 \begin{rem}\label{analog}
The critical case where  $p=3$ can be seen as a counterpart of the famous result by Yudovich in 
\cite{Yudovich63} on the existence and uniqueness of the weak solutions with bounded vorticity of the 2D  incompressible Euler equations. 
In particular, this result uses in a key manner that the velocity field is log-Lipschitz. A related result, with a conditional flavour, also holds in the case of the Vlasov-Poisson equations, see \cite{Loeper}. 
\end{rem}
The most delicate part of the proof of Theorem \ref{thm0} is the uniqueness issue. 
 Indeed Theorem \ref{thm0} is obtained by classical arguments from the following  stability estimates where  $W_1$  denotes the first Wasserstein distance defined for two  probability measures $\rho_1,\rho_2 $ 
by 
$$ W_1 (\rho_1,\rho_2) := 
\underset{\pi \in \Pi(\rho_1,\rho_2)} \inf \int |x-y| d\pi(x,y)  , $$
 where $\Pi(\rho_1,\rho_2)$ is the set of admissible transport plans $\pi $ in $\mathcal{P}(\mathbb{R}^3\times \mathbb{R}^3)$
 having $\rho_1$ as first marginal and $\rho_2$ as second marginal.
\begin{thm} \label{thm}
Let $p\geq 3$.
For any $R>0$, there is $C>0$ such that for any couple of initial densities 
  $ (\rho_1^0 , \rho_2^0 )$   in $E_p $ with 
  $ \max_{i=1,2} (\|\rho_i^0\|_{ L^1 (\R^3) },\|\rho_i^0\|_{ L^p (\R^3) }) \leq R$, if  $ (u_1,\rho_1) $ and $ (u_2 ,\rho_2)$ satisfy the transport-Stokes equation \eqref{TS} 
  for any $t\geq 0$, then 
     \begin{align}
         \label{sta-geq-3}
         W_1(\rho_1(t),\rho_2(t)) &\leq W_1(\rho_1^0,\rho_2^0)e^{C t}, \quad  \text{if } p > 3, 
              \\    \label{sta-geq-critik}
         W_1 (\rho_1(t),\rho_2(t)) &\leq W_1(\rho_1^0,\rho_2^0)^{e^{-C t}} , \quad  \text{if } p = 3 .
     \end{align}
 \end{thm}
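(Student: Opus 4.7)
The plan is to adopt the classical Dobrushin-type strategy, leveraging the Lagrangian representation from Theorem~\ref{thm0}. Let $\pi^0$ be an optimal transport plan between $\rho_1^0$ and $\rho_2^0$ for the $W_1$ cost. Set
\begin{equation*}
D(t) := \int_{\R^3 \times \R^3} |X_1(t,0,x) - X_2(t,0,y)|\,d\pi^0(x,y),
\end{equation*}
where $X_i$ denotes the flow of $u_i$ provided by Theorem~\ref{thm0}. Since $(X_1(t,0,\cdot), X_2(t,0,\cdot))\#\pi^0$ is an admissible transport plan between $\rho_1(t)$ and $\rho_2(t)$ by \eqref{pousse-th}, one has $W_1(\rho_1(t),\rho_2(t)) \le D(t)$, so it suffices to bound $D$. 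Differentiating via \eqref{cflow-def} and using the triangle inequality yields $D'(t) \le A(t) + B(t)$ with
\begin{equation*}
A(t) := \int |u_1(t,X_1) - u_1(t,X_2)|\,d\pi^0, \qquad B(t) := \int |u_1(t,z) - u_2(t,z)|\,d\rho_2(t,z),
\end{equation*}
where in $B$ I used that $\rho_2(t)$ is the second marginal of $(X_1,X_2)\#\pi^0$.

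The term $A(t)$ is controlled using the regularity of $u_1$ from \eqref{ulipoullip}. For $p > 3$, $u_1$ is uniformly Lipschitz with norm depending only on $R$ (by the conservation \eqref{conserLp}), hence $A(t) \le C D(t)$. For $p = 3$, $u_1$ is only log-Lipschitz, and Jensen's inequality applied to the concave function $\Phi(r) := r(1 - \log r)$ for $r \le 1$ (extended by $\Phi(r) := 1$ for $r \ge 1$) yields $A(t) \le C \Phi(D(t))$, which reads $C D(t)(1 - \log D(t))$ in the relevant regime $D(t) \le 1$. For $B(t)$, I use the explicit representation $u_i(t,z) = \int K(z-y)\rho_i(t,y)\,dy$, with $K$ built from the appropriate column of the Oseen tensor, satisfying $|K(x)| \le C|x|^{-1}$ and $|\nabla K(x)| \le C|x|^{-2}$. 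Writing $(u_1-u_2)(t,z) = \int\bigl(K(z-y_1)-K(z-y_2)\bigr)\,d\pi(t)(y_1,y_2)$ with $\pi(t) := (X_1,X_2)\#\pi^0$, Fubini produces
\begin{equation*}
B(t) \le \int\!\!\int |K(z-y_1)-K(z-y_2)|\,d\rho_2(t,z)\,d\pi(t)(y_1,y_2).
\end{equation*}
For fixed $(y_1,y_2)$ with $r := |y_1-y_2|$, I split the inner $z$-integral into the singular piece $\{|z-y_1|\le 2r \text{ or } |z-y_2|\le 2r\}$, where the size bound on $K$ combined with H\"older against $\rho_2 \in L^p$ is used, and its complement, where the mean value inequality gives $|K(z-y_1)-K(z-y_2)| \le Cr|z-y_1|^{-2}$ and a further split (a unit ball versus its complement) is handled by $\rho_2 \in L^p$ and $\rho_2 \in L^1$ respectively. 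The outcome is $\int|K(z-y_1)-K(z-y_2)|\,d\rho_2 \le Cr$ when $p > 3$, and $\le Cr(1-\log r)$ for $r\le 1$ when $p = 3$. A second Jensen step (with $\pi(t)$, a probability plan satisfying $\int r\,d\pi(t) = D(t)$) then yields $B(t) \le CD(t)$ for $p > 3$ and $B(t) \le CD(t)(1-\log D(t))$ for $p = 3$.

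Combining, one gets $D'(t) \le CD(t)$ in the case $p > 3$, integrating by Gronwall to \eqref{sta-geq-3}, and $D'(t) \le CD(t)(1-\log D(t))$ in the case $p = 3$, integrating by Osgood's lemma to \eqref{sta-geq-critik} on the regime $D(t)\le 1$ (the complementary regime being absorbed by enlarging $C$ and trivial monotonicity considerations). The main technical obstacle is the inner kernel estimate $\int|K(z-y_1)-K(z-y_2)|\,d\rho_2 \le Cr(1-\log r)$ in the critical case $p = 3$: one must carefully balance the $L^1$ and $L^3$ bounds on $\rho_2$ against the $|\cdot|^{-2}$ singularity of $\nabla K$ to extract this precise logarithmic behavior, as any further loss would degrade the double-exponential decay of \eqref{sta-geq-critik}.
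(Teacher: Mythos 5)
Your proposal is correct and follows essentially the same route as the paper: a Lagrangian representation of $W_1$ via the flows of Theorem~\ref{thm0}, a split of the resulting bound into a ``regularity of $u_1$'' term (Lipschitz for $p>3$, log-Lipschitz for $p=3$) and a ``difference $u_1-u_2$'' term handled by the Oseen-kernel bounds \eqref{B-prop}--\eqref{Lip-prop} together with H\"older's inequality against the $L^1\cap L^p$ norms, followed by Jensen for the concave Osgood modulus, and then Gronwall (resp.\ Osgood) for $p>3$ (resp.\ $p=3$); in particular your kernel estimate $\int|K(z-y_1)-K(z-y_2)|\,d\rho_2\le Cr(1+\ln_{-}r)$ is exactly the paper's Lemma~\ref{LLint}. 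The only cosmetic difference is that you work with an optimal transport plan $\pi^0$ and the quantity $D(t)$, whereas the paper phrases the argument with an optimal transport map $T_0$ and the quantity $Q(t)$, a substitution the paper explicitly notes (right after \eqref{W-infi}) is an equivalent variant.
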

The proof of  Theorem \ref{thm} is given in Section 
 \ref{sec-stabb}
 and 
Theorem \ref{thm0} is deduced from  Theorem \ref{thm} in Section 
\ref{sec-proof-princi}. 
  \begin{rem}\label{rem:TS_avec_terme_source}
  We emphasize that the existence and uniqueness result can be extented to the case of transport-Stokes system with an additional source term $f$ in the right hand side of the Stokes equation provided that $f \in  C([0,+\infty), L^1(\R^3) \cap L^p(\R^3))$.
  \end{rem}

\subsection{Analyticity of the flow map when \texorpdfstring{$p > 3$}{p>3}.}

Next, we prove that, in the case where $p>3$, 
the flow map which describes the trajectories of these solutions is analytic with respect to time. More precisely the following result holds true. 
\begin{thm}
\label{analytic}
Let $p>3$, $\rho_0$ in $E_p$ and 
$ (u,\rho)$ in $C([0,+\infty),W^{2,p} \times  E_p)$
the unique corresponding solution  of the transport-Stokes equation \eqref{TS}.
Let $\mu:=1-3/p$. 
Let $X$  the flow associated with $u$ by \eqref{uflox} and 
\eqref{cflow-def}. 
Then $ X$ is analytic from $[0,+\infty)$ to $ C^{1,\mu}(\R^3)$. 
\end{thm}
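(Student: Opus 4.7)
The starting point of my proof is to turn the flow equation into a closed autonomous ODE whose right-hand side depends on the flow alone. Writing the velocity via the Oseen tensor
$G(z) := \frac{1}{8\pi}\bigl(\frac{I}{|z|} + \frac{z \otimes z}{|z|^{3}}\bigr)$, one has $u(t,z) = -\int G(z-w)\, e_{3}\, \rho(t,w)\, dw$. Combining this with the push-forward identity \eqref{pousse-th} and setting $Y(t,\cdot) := X(t,0,\cdot)$, the equation $\partial_{t} Y(t,x) = u(t, Y(t,x))$ becomes
\begin{equation*}
\partial_{t} Y(t,x) = F[Y(t,\cdot)](x), \qquad F[Z](x) := -\int_{\R^{3}} G(Z(x)-Z(y))\, e_{3}\, \rho_{0}(y)\, dy.
\end{equation*}
The general case $s \neq 0$ will follow by restarting the argument at time $s$ with initial density $\rho(s)$, so it suffices to establish analyticity of $t \mapsto Y(t) - \Id$ with values in $C^{1,\mu}(\R^{3};\R^{3})$.

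\textbf{Analyticity of the functional $F$.}
The plan is then to show that $F$ is a real-analytic map from a neighborhood of $\Id$ in $\Id + C^{1,\mu}(\R^{3};\R^{3})$ into $C^{1,\mu}(\R^{3};\R^{3})$. Formally
\begin{equation*}
F[Z+h](x) - F[Z](x) = -\sum_{k \ge 1} \frac{1}{k!} \int \nabla^{k} G(Z(x)-Z(y))\cdot (h(x)-h(y))^{\otimes k}\, e_{3}\, \rho_{0}(y)\, dy.
\end{equation*}
Since $G$ is real-analytic and homogeneous of degree $-1$ away from the origin, Cauchy estimates give $|\nabla^{k} G(z)| \leq C_{0}^{k+1} k!/|z|^{1+k}$. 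When $\|Z-\Id\|_{W^{1,\infty}}$ is small, $Z$ is bi-Lipschitz with $|Z(x)-Z(y)| \geq \tfrac{1}{2}|x-y|$, and each factor $h(x)-h(y)$ absorbs one power of $|x-y|$ through $\|\nabla h\|_{L^{\infty}}$. The resulting pointwise bound on the $k$-th term is $C^{k+1} k!\, \|\nabla h\|_{L^{\infty}}^{k} \int \rho_{0}(y)/|x-y|\, dy$, the last integral being uniformly controlled by $\|\rho_{0}\|_{L^{1}\cap L^{p}}$ for $p>3$. Upgrading this $L^\infty$ estimate to the $C^{1,\mu}$ norm requires Schauder-type Hölder bounds on $\nabla_{x} F[Z+h]$ where the extra derivative produces a kernel of homogeneity $-2-k$, handled by the same Calderón--Zygmund machinery already underlying \eqref{reg_u,p}.

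\textbf{Abstract analytic ODE and globalization.}
Once $F$ is shown to extend holomorphically to a complex neighborhood of $\Id$ in the complexification of $C^{1,\mu}$, I invoke the classical holomorphic Cauchy--Lipschitz theorem in Banach spaces to produce a holomorphic local flow, yielding analyticity of $t \mapsto Y(t)-\Id$ on some interval $[0,T_{*}]$ with values in $C^{1,\mu}$. By \eqref{conserLp} the density $\rho(t_{0})$ has the same $L^{1}\cap L^{p}$ norms as $\rho_{0}$ for every $t_{0} \ge 0$, so restarting the local construction at $t_{0}$ produces a uniform time step depending only on $\|\rho_{0}\|_{L^{1}\cap L^{p}}$; uniqueness together with the semigroup identity $X(t,0,\cdot) = X(t,t_{0},\cdot) \circ X(t_{0},0,\cdot)$ then glue the pieces into a globally analytic map.

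\textbf{Main obstacle.}
The hardest step will be the $C^{1,\mu}$ estimate on the multilinear differentials $D^{k}F[Z]$ with constants growing at most geometrically in $k$. Differentiating $F[Z+h]$ in $x$ produces $\nabla^{k+1}G$ of homogeneity $-2-k$, which is only locally integrable for $k=0$ and requires genuine Hölder singular-integral arguments afterwards. Carefully combining the Lipschitz control on $h$, the Hölder regularity of $\nabla h$ and $\nabla Z$, and the $k!$ growth of the Cauchy estimates on the analytic kernel, without losing the required geometric rate in $k$, is the crux; once this is in hand, everything else reduces to a standard application of the abstract analytic ODE machinery.
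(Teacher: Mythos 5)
Your proposal follows a genuinely different route from the paper. The paper, following \cite{CVW}, proves Proposition~\ref{apana} directly: it bounds $\|\partial_t^n X(t,\cdot)\|_{1,\mu}$ by induction on $n$, using the multivariate Fa\`a di Bruno formula (Lemma~\ref{Lemme_FDB}), the factorial bounds on $\partial^\alpha\mathcal U$ (Lemma~\ref{lemme_oseen_rec}), and the delicate H\"older estimate of Lemma~\ref{lemme_Miot_general}; the binomial identities \eqref{formule_magique1}--\eqref{formule_magique2} from \cite{CVW} close the induction. You instead rewrite the flow equation as an autonomous Banach-space ODE $\partial_t Y = F[Y]$ and aim to invoke the holomorphic Cauchy--Lipschitz theorem after showing that $F$ is analytic on a neighbourhood of $\Id$ in $\Id + C^{1,\mu}$. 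This is the ``abstract analytic ODE'' strategy used for Euler by, e.g., Inci and Shnirelman (among the references the paper cites as alternative proofs of Chemin's theorem). Your Lagrangian closure is exactly the paper's formula~\eqref{mainformula}, your Cauchy estimate on $\nabla^k G$ matches Lemma~\ref{lemme_oseen_rec}, and your globalization via restarting at $t_0$ using \eqref{conserLp} and the semigroup identity is sound, matching the remark after Proposition~\ref{apana}.

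However, there is a genuine gap: the holomorphy of $F$ on a neighbourhood of $\Id$ in $C^{1,\mu}$ — equivalently, the bound $\|D^kF[Z]h^{\otimes k}\|_{1,\mu}\le C^{k+1}k!\,\|h\|_{1,\mu}^k$ uniformly for bi-Lipschitz $Z$ near $\Id$ — is not proved. You explicitly flag the H\"older part of this estimate as ``the crux,'' but then treat the remainder as a standard application of the analytic ODE machinery. In fact that estimate \emph{is} the whole content of the theorem: in the paper it occupies all of Section~\ref{sec-apana}, and in particular the H\"older bound you defer corresponds to Lemma~\ref{lemme_Miot_general}, whose proof requires the Lipschitz-path comparison \eqref{app1}--\eqref{app2} (from \cite[Lemma 3.15]{Hofer}) and the three-zone splitting \eqref{trich} into $J_1,J_2,J_3$, with the exponents $3/p'-2=\mu$ appearing from the $L^p$ assumption. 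The pointwise $|x-y|^{-2}$ bound you write down only gives the $L^\infty$ part of the $C^{1,\mu}$ norm; upgrading it to the $\mu$-seminorm is not a routine Calder\'on--Zygmund application, because $\rho_0$ is only in $L^1\cap L^p$ and the kernel $\nabla^{k+1}G$ must be estimated with constants growing no faster than geometrically in $k$, uniformly in the bi-Lipschitz deformation $Z$. Also, a minor but real subtlety with the complex-analytic ODE route: the Oseen tensor involves $|z|$, which is not a holomorphic function of $z\in\C^3$, so the complexification of $F$ has to be done with some care (this is why the paper and \cite{CVW} work entirely in the real category and prove analyticity in $t$ by direct factorial bounds). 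Until the uniform $C^{1,\mu}$ estimate on $D^kF$ is actually established, the proposal is a correct blueprint but not a proof.
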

This result  extends to the transport-Stokes system a result which was obtained for classical solutions of the incompressible Euler system by Chemin in \cite{Chemin}, and reproved by various other methods see  by, among others \cite{CVW,FZ,Gamblin,Inci,Kato,ogfstt,Serfati,Shnirelman}.

To prove Theorem \ref{analytic}, the key point is to prove the following result. 
\begin{prop} \label{apana}
There exists $T>0$ depending only on  $\|\rho_0\|_{L^1 \cap L^p}$ and two constants $C_1 $ and $C_0$ depending only on $\|\rho_0\|$, $T$ and $\mu$  such that for all $t\in[0,T]$ and for all $n\geq1$,
\begin{equation}\label{rec_formule}
\|\partial_t^{n} X(t,\cdot)\|_{1,\mu} \leq (-1)^{n-1} n! \begin{pmatrix}1/2 \\ n  \end{pmatrix}C_0^n C_1^{n-1}.
\end{equation}
\end{prop}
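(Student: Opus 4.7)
\emph{Proof plan.} Set $F_n(t) := \|\partial_t^n X(t,\cdot)\|_{C^{1,\mu}}$ and $b_n := (-1)^{n-1}\binom{1/2}{n}$, so the target \eqref{rec_formule} reads $F_n(t) \le n!\,b_n\,C_0^n\,C_1^{n-1}$. I would proceed by induction on $n$, fixing $T>0$ small enough in terms of $\|\rho_0\|_{L^1\cap L^p}$ that $X(t,\cdot)$ stays close to $\Id$ in $C^{1,\mu}$ on $[0,T]$ and is in particular bi-Lipschitz with constants close to~$1$. The base case $n=1$ is immediate from $\partial_t X = u(t,X)$ combined with the elliptic estimate \eqref{reg_u,p}, the Morrey embedding $W^{2,p}\hookrightarrow C^{1,\mu}$, the conservation law \eqref{conserLp}, and the composition inequality in $C^{1,\mu}$.

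For the inductive step the essential point is that the Stokes equation is linear and instantaneous in time, so the push-forward identity $\rho(t,\cdot) = X(t,\cdot)\#\rho_0$ (together with $\det\nabla X=1$) yields the closed representation
\[
\partial_t X(t,x) \;=\; u(t,X(t,x)) \;=\; -\int_{\R^3} \mathcal{O}\bigl(X(t,x)-X(t,z)\bigr)\,e_3\,\rho_0(z)\,dz,
\]
where $\mathcal{O}$ is the Oseen tensor, smooth away from $0$ with $|\nabla^j\mathcal{O}(y)|\lesssim|y|^{-1-j}$. Since all the $t$-dependence sits inside $X$ inside the kernel, one may differentiate indefinitely under the integral; Fa\`a di Bruno then yields
\[
\partial_t^{n+1} X(t,x) = -\sum_{j=1}^n\frac{1}{j!}\sum_{\substack{k_1+\cdots+k_j=n\\ k_i\ge 1}}\frac{n!}{\prod_i k_i!}\int_{\R^3} \nabla^j\mathcal{O}\bigl(X(x){-}X(z)\bigr)\bigotimes_i\bigl(\partial_t^{k_i}X(x){-}\partial_t^{k_i}X(z)\bigr)\,e_3\,\rho_0(z)\,dz.
\]

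I would then estimate each of these integrals in $C^{1,\mu}$. Each factor $|\partial_t^{k_i}X(x)-\partial_t^{k_i}X(z)|\le F_{k_i}(t)\,|x-z|$ provides one cancellation against the singular kernel; combining all $j$ of them with the bi-Lipschitz bound $|\nabla^j\mathcal{O}(X(x)-X(z))|\lesssim |x-z|^{-1-j}$ collapses the integrand to $|x-z|^{-1}\prod_i F_{k_i}(t)$. The remaining Riesz-type potential $\int|x-z|^{-1}\rho_0(z)\,dz$ is bounded by $C\|\rho_0\|_{L^1\cap L^p}$ for $p>3$. Controlling the H\"older seminorm of $\nabla_x$ of the same integral requires one more derivative on $\mathcal{O}$ and is handled by a standard symmetrization/commutator argument against the bi-Lipschitz flow, giving the same type of bound. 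Altogether this yields a Catalan-type recursion
\[
F_{n+1}(t) \;\le\; C_*\, n!\sum_{j=1}^n\frac{1}{j!}\sum_{\substack{k_1+\cdots+k_j=n\\ k_i\ge 1}}\frac{1}{\prod_i k_i!}\prod_i F_{k_i}(t),
\]
with $C_*$ depending only on $\|\rho_0\|_{L^1\cap L^p}$, $T$ and $\mu$.

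To close the induction I would introduce the generating function $g(x):=\sum_{k\ge 1} b_k x^k = 1-\sqrt{1-x}$. Substituting the ansatz $F_k\le k!\,b_k C_0^k C_1^{k-1}$ the factorials $k_i!$ cancel and the sum collapses to
\[
F_{n+1}(t) \;\le\; C_*\,n!\,C_0^n C_1^n\,[x^n]\bigl(e^{g(x)/C_1}-1\bigr).
\]
Since the target rewrites as $(n+1)!\,b_{n+1}\,C_0^{n+1}C_1^n = n!\,C_0^{n+1}C_1^n\,[x^n]\,g'(x) = n!\,C_0^{n+1}C_1^n\,[x^n]\,\tfrac{1}{2\sqrt{1-x}}$, it suffices to choose $C_1$ then $C_0$ large enough (in terms of $C_*$) that $\tfrac{C_0}{2C_*\sqrt{1-x}} - \bigl(e^{g(x)/C_1}-1\bigr)$ has nonnegative Taylor coefficients, which holds since $e^{g(x)/C_1}-1 = O\bigl(C_1^{-1}(1-\sqrt{1-x})\bigr)$ is dominated coefficient-wise by a constant multiple of $1/\sqrt{1-x}$. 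This is precisely the Catalan identity $\sum_{k=1}^{n-1} b_k b_{n-k} = 2 b_n$ (and its $j$-fold analogues produced by $g^j$) responsible for the factor $\binom{1/2}{n}$ in the final bound.

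The main obstacle I foresee is the $C^{1,\mu}$-control (rather than the mere $L^\infty$ one) of the Fa\`a di Bruno sum: differentiating once more in $x$ raises the order of the singular kernel, so one must symmetrize the integrand in $(x,z)$ and invoke commutator estimates against the bi-Lipschitz flow to recover the right $|x-z|^{-1}$ effective size. A secondary delicate point is aligning exactly the multinomial weights of Fa\`a di Bruno with the convolution identities for $(b_n)$ so as to obtain the precise factor $\binom{1/2}{n}$ rather than just the (weaker) analytic growth $\sim n!/n^{3/2}$.
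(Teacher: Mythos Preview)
Your overall plan is the same as the paper's (which itself adapts \cite{CVW}): closed Lagrangian representation via the Oseen kernel, Fa\`a di Bruno, and an inductive closure tied to the generating function $g(x)=1-\sqrt{1-x}$. Two points in your sketch deviate from what actually makes the argument close.

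\textbf{The kernel estimate needs a factorial.} You write $|\nabla^j\mathcal O(y)|\lesssim |y|^{-1-j}$, but the correct bound (proved as Lemma~\ref{lemme_oseen_rec} in the paper) is $|\partial^\alpha\mathcal U(y)|\le K^{|\alpha|}|\alpha|!\,|y|^{-1-|\alpha|}$. The extra $j!$ exactly cancels the $1/j!$ from the symmetry in Fa\`a di Bruno, so your recursion should produce a \emph{geometric} series in $j$, not an exponential one. In generating-function language, the object to control is $[x^n]\,\frac{(K/C_1)\,g(x)}{1-(K/C_1)\,g(x)}$ rather than $[x^n](e^{g(x)/C_1}-1)$; this is precisely how the paper closes the $L^\infty$ part, by taking $C_1$ large so that $K\lambda/C_1<1$ and then invoking the CVW identity \eqref{formule_magique1}, which is exactly the coefficient identity behind $g' = \tfrac12(1-x)^{-1/2}$. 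With the exponential you wrote, the coefficient comparison you sketch does not hold as stated.

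\textbf{The $C^{1,\mu}$ part is not a commutator argument, and it has an extra convolution layer.} For $\|\nabla\partial_t^{n+1}X\|_\infty$ and $[\nabla\partial_t^{n+1}X]_{0,\mu}$ the paper first applies Leibniz to separate out a factor $\partial_t^{n-k}\nabla X$ before Fa\`a di Bruno on the kernel, which produces a \emph{second} discrete convolution in $k$; this is closed by a different identity, \eqref{formule_magique2}, essentially $\sum_{k}(k{+}1)b_{k+1}b_{n-k}\le 4(n{+}1)b_{n+1}$. The H\"older seminorm of the kernel integral is not handled by symmetrization or commutators but by a near/far splitting of the integration domain (Lemma~\ref{lemme_Miot_general}, in the spirit of \cite{Miot}), using that $\int |x-z|^{-2}\rho_0(z)\,dz$ is finite precisely when $p>3$. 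Your plan to ``symmetrize in $(x,z)$'' does not produce the required bound here; you need this splitting together with the path estimate $|\nabla_u G_\alpha|\lesssim |u-z|^{-3}$ to get the $|x-y|^\mu$ factor.
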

 Above we have used, for any $n\geq 1$, the notation 
\begin{equation}\label{binomial_1/2}
    \begin{pmatrix}
    1/2\\
    n
    \end{pmatrix}= \frac{(1/2)(1/2-1) \cdots (1/2-n+1)}{n!};
\end{equation}
which is extended by convention by 
$\begin{pmatrix}
1/2\\0
\end{pmatrix}=-1$. 
With this definition we observe that 
$$
(-1)^{n-1}\begin{pmatrix}
1/2\\n
\end{pmatrix}\geq 0.
$$
Moreover, for all $n\geq 2$,
\begin{equation}\label{binomial_1/2_borne}
  (-1)^{n-1}  n!\begin{pmatrix}
1/2\\n
\end{pmatrix} \leq C \frac{n!}{2^n}.
\end{equation}
Therefore to deduce Theorem \ref{analytic} from  Proposition \ref{apana}, 
it is sufficient to combine \eqref{rec_formule} and \eqref{binomial_1/2_borne}
on any time interval of the form $[t,t+T] $. Indeed, since the ODE equation for the flow is autonomous, we have stated Proposition \ref{apana} in the case  where $t=0$ for simplicity.

 The proof of Proposition \ref{apana} is given in Section 
 \ref{sec-apana} and is based on the approach developed in \cite{CVW} where  authors prove analyticity of Lagrangian trajectories for several incompressible  inviscid models including the 2D surface quasi-geostrophic equation, the 2D incompressible porous medium equation, the 2D and the 3D incompressible Euler equations  and the 2D Boussinesq equations. It turns out that, for the transport-Stokes model, the  Oseen kernel benefits from better integrability properties, compared to the kernels considered in \cite{CVW}, which allows us to consider an initial density function $\rho_0$ which is less regular than the initial data considered in \cite{CVW}.

 \begin{rem}\label{G3}
In the case where $p=3$, it is likely that 
 the flow  map  $X$   still benefits from Gevrey regularity from $[0,+\infty)$ to $C^{r_t}(\R^3)$.
 Such a result is proved in the case of the $2D$ incompressible Euler equations with bounded vorticity in \cite{Gamblin} in the case where the fluid occupies the full space and in 
 \cite{Sueur} in the case where  the fluid occupies  a bounded domain. 
\end{rem}

\subsection{Exact controllability of  the transport-Stokes system}

Our third main result establishes the   small-time global  exact controllability of  the transport-Stokes system 
when distributed forces are allowed in an arbitrary non-empty open subset of $\R^3$, in the case where the initial and final densities are in  $L^p_c$ with $p \geq 3$.
Here the index $c$ means that these densities are compactly supported. 

\begin{thm}\label{TS-control}
Let $T>0$, $p\geq 3$. Let $\omega$ a non-empty open subset of $\R^3$. 
Let $\rho_0$ and $\rho_f$ in $L^p_c$.
There exists $ (u,\rho)$ in $C([0,T),W^{2,p}(\R^3) \times  L^p(\R^3))$ and analogously for $p=3$, $u \in C([0,T],\dot{W}^{1,3}(\R^3)\cap \dot{W}^{2,3}(\R^3) \cap \bigcap_{q>3} L^q(\R^3)$, there are $f \in C_c(0,T, L^\infty(\R^3))$ and $g \in C_c(0,T,L^1(\R^3)\cap L^p(\R^3)) $, both compactly supported in $(0,T)\times \omega $ such that 
\begin{equation*}
\left \{
\begin{array}{rcll}
  \partial_t \rho +\div(\rho u)&=& g,& \text{ on }  \R^3 ,\\
    -\Delta u+\nabla p &=& - \rho e_3+f ,& \text{ on } \R^3 ,\\
    \div u&=& 0,& \text{ on } \R^3 ,\\
    \rho(0)=\rho_0&,& \rho(T)=\rho_f,& \text{ on } \R^3 .
\end{array}    
    \right.
\end{equation*}
\end{thm}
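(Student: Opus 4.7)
I would approach this via a four-phase construction on a subdivision $0<T_1<T_2<T_3<T$, exploiting that $f$ generates the velocity through Stokes while $g$ adds or removes mass, both localized in $\omega$:
(A) on $[0,T_1]$, take $g\equiv 0$ and design $f$ so that the resulting transport-Stokes flow carries $\supp\rho_0$ into a compact set $K\subset\omega$;
(B) on $[T_1,T_2]$, set $f:=\rho e_3$ to enforce $u\equiv 0$, and use $g$ to smoothly deform $\rho(T_1,\cdot)$ down to the zero function;
(C) on $[T_2,T_3]$, keep $f=0$ (so $u\equiv 0$ since $\rho\equiv 0$) and use $g$ to grow from zero an auxiliary density $\tilde\rho$, compactly supported in $\omega$ and tailored so that a prescribed transport over $[T_3,T]$ will map it onto $\rho_f$;
(D) on $[T_3,T]$, take $g\equiv 0$ and design $f$ to transport $\tilde\rho$ onto $\rho_f$.

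The middle phases (B) and (C) are immediate: at each endpoint the density is compactly supported in $\omega$, so any smooth affine interpolation gives $g:=\partial_t\rho$ supported in $\omega$, while the required $f:=\rho e_3$ (or $f=0$) needed to maintain $u\equiv 0$ via Stokes is automatically supported in $\omega$ since $\rho$ is.

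Phases (A) and (D) are structurally the same (initial and final supports exchanged), so I describe (A). The idea is to fix a smooth isotopy $(\Psi_t)_{t\in[0,T_1]}$ of compactly supported volume-preserving diffeomorphisms of $\R^3$ with $\Psi_0=\Id$ and $\Psi_{T_1}(\supp\rho_0)\subset K$, and to set $U(t,x):=\partial_t\Psi_t\circ\Psi_t^{-1}(x)$ together with $\rho(t):=\Psi_t\#\rho_0$. Then $\rho$ satisfies the transport equation with velocity $U$ and source $g=0$, so the remaining task is to realize $U$ as the Stokes velocity associated to a forcing $-\rho e_3+f$ with $f$ supported in $\omega$.

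This last requirement is the principal obstacle: the freely prescribed isotopy generally produces $f:=-\Delta U+\nabla q+\rho e_3$ with support outside $\omega$, because Stokes couples the forcing to the velocity nonlocally. To overcome it, I would recast the construction as a fixed-point problem parametrized by $f\in C([0,T_1],L^\infty(\R^3))$ supported in $\omega$: solve Stokes with source $-\rho e_3+f$ to obtain the velocity $u=u_\rho^{\mathrm{nat}}+u_f$, integrate its flow starting from $\rho_0$, and look for $f$ so that the time-$T_1$ flow maps $\supp\rho_0$ into $K$. The Oseen tensor provides explicit control of $u_f$ in terms of $f$, and the reachable set of flows generated this way (Stokes-harmonic perturbations outside $\omega$ added to the sedimentation velocity) is rich enough to drive $\supp\rho_0$ into $K$ on a suitable time scale, in the spirit of the return method of Coron and Glass. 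The regularity estimates \eqref{reg_u,p}--\eqref{reg_u,p_bis} together with the well-posedness of the system with source term (Remark~\ref{rem:TS_avec_terme_source}) furnish the continuity needed to close the fixed-point. Concatenating the four phases, with smooth matching at the junctions $T_1,T_2,T_3$ (chosen so that the endpoint densities agree and $f,g$ vanish near these times), and invoking Theorem~\ref{thm0} then yields a solution with the regularity stated.
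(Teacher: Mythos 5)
There is a genuine gap in Phase (A), and it is fatal: you ask the flow to carry the \emph{entire} support of $\rho_0$ into a compact $K\subset\omega$ using only $f$ (no $g$). Since $\div u=0$, the flow $X(t,0,\cdot)$ is volume-preserving, so $|X(T_1,0,\supp\rho_0)|=|\supp\rho_0|$. If $|\supp\rho_0|>|\omega|$ — which is the generic situation, because $\omega$ is an arbitrary non-empty open set — no choice of $f$ can achieve $X(T_1,0,\supp\rho_0)\subset\omega$. The same obstruction kills Phase (D). This is not a technical detail that a fixed-point argument can absorb; it is a conservation-law obstruction to the very geometry of your four-phase plan.

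The paper avoids this by never attempting to move the whole support at once. It covers $\supp\rho_0$ by finitely many balls $B(x_i,r_i)$ and constructs, on a sequence of short sub-intervals, an auxiliary Stokes velocity $u_\aux$ whose flow sends \emph{one ball at a time} into $B(0,2\delta)\subset\omega$ and then returns to the identity (the ``return'' in the return method). While each ball is visiting $\omega$, the density there is multiplied by a cutoff $\chi_i$ via the source $g^n$, which annihilates that piece of mass before the flow returns; the next piece is then handled on the next sub-interval. Thus $g$ is active throughout the transport phase and is what makes the scheme compatible with volume conservation. Your proposal confines $g$ to the ``easy'' middle phases, which is precisely why it cannot work. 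A secondary issue: your fixed point is asked to do two jobs at once (find a controllability-suitable $f$, and compensate the self-induced sedimentation velocity of the moving density), whereas the paper separates them — an explicit construction via the Oseen tensor (Lemmas~\ref{lemme2}--\ref{lemme3}) gives the auxiliary flow, a small-time rescaling $u_\aux^\eps$ makes this auxiliary part dominate, and the fixed point in Proposition~\ref{propRn} only handles the residual nonlinear coupling. If you interleave the removal of mass with the transport, piece by piece, and adopt this separation of concerns, the argument can be repaired.
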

Theorem \ref{TS-control} adapts to the case of the transport-Stokes system  some earlier results obtained for the incompressible Euler equations in \cite{Coron96,G20,G}.
The proof follows closely a now well-known strategy, relying on Coron's return method  and the existence of peculiar vector fields, which solves the Stokes system in the  uncontrolled zone $\R^3 \setminus \omega$. For sake of completeness, we give a sketch of proof in Section 
\ref{sec-cont}. 
Let us also mention that results regarding the Lagrangian controllability could also be obtained, in the spirit of the  results   \cite{GH1,GH2,HK}  in the case of the Euler equations and the result \cite{GH3} in the case of the steady Stokes equations.
The proof of Theorem \ref{TS-control} is given in Section
\ref{sec-control}. 

\subsection{A few open questions on the transport-Stokes system}
\begin{itemize}
    \item 
It would be interesting to investigate whether the results obtained in this paper can be extended to the case of $\R^2$ and the case of domains with boundaries. We have already mentioned the recent paper \cite{GrayerII} for the case of compactly supported initial densities in $L^1(\R^2)\cap L^\infty(\R^2)$ and the paper \cite{Leblond} where existence and uniqueness is proved in the case of bounded domain in $\R^3$ and $\R^2$ and also in the case of an infinite strip $\Omega= (0,1)\times \R$ with a flux condition. In both cases the density is assumed to be in $ L^{\infty}$.  Regarding the issue of the regularity in time of the flow map, let us mention \cite{Kato,ogfstt,GS,Sueur} where the case 
of the incompressible Euler equations in  domains with boundaries is tackled.

 \item Another question is whether or not the well-posedness part of our results could be extended to lower regularity by making use of the theory of renormalized or Lagrangian solutions in the spirit, for example, of 
\cite{Lions,CCS}. One natural goal in this direction would be to be able to deal with densities  $\rho$  which are merely measures, with the aim to tackle 
 the mean field limit. 
\item  It has been shown in \cite{Mecherbet2} that if $\rho_0=1_{B_0}$ with $B_0$ the unit ball then the solution $\rho=1_{B}$ remains a spherical patch for all time, more precisely  there exists a constant $v\in \R^3$ such that $(\rho,u)$ satisfies
\begin{equation}\label{eq:hadamard_Rybczynski}
\begin{array}{rcl}
(u(t,x),\rho(t,x))&=&(u_0(x-vt),\rho_0(x-vt)),\\
(u-v)\cdot n& =& 0 \text{ on } \partial B ,
\end{array}
\end{equation}
where $n$ is the unit outer normal vector. A natural question is then to investigate the reciprocal property, that is, if there exists a constant velocity $v$ and a bounded domain $B_0$ for which \eqref{eq:hadamard_Rybczynski} is satisfied then $B_0$ is necessarily a ball. Note that this type of characterisation of the ball has been proved in any dimension by \cite[Theorem 1.1]{Fraenkel} in the case where $u_0$ is a Newtonian potential housed in a bounded open set $G$. The author shows that if $u_0$ is constant on $\partial G$ then necessarily $G$ is a ball.

 A related issue would be to investigate the spherical case in the two dimensional setting as well since well posedness of $2$d patchs were established in \cite{GrayerII}.

\end{itemize}


\section{A few preliminary reminders}

In this section we gather some classical material which is useful later on.

\subsection{Multivariable calculus tools}

We denote by $\mathbb{N}_0^3$ the set of three dimensional multi-indices $\alpha=(\alpha_1,\alpha_2,\alpha_3)$ endowed with the component-wise sum and difference and with a partial order 
$$
\alpha \leq \beta \Leftrightarrow \Big( \alpha_i \leq \beta_i ,\quad \forall 1 \leq i \leq 3 \Big).
$$
We introduce the following notations: for any $\alpha,\beta \in \mathbb{N}_0^3$, for any $y\in\R^3$, we set
\begin{gather*}
|\alpha|:= \alpha_1+\alpha_2+\alpha_3 ,\quad
\alpha! := \alpha_1! \alpha_2!\alpha_3! ,\quad
\partial^\alpha :=\partial_{x_1}^{\alpha_1}\partial_{x_2}^{\alpha_2} \partial^{\alpha_3} ,\quad
y^\alpha :=\left(y_1^{\alpha_1}\right) \left( y_2^{\alpha_2} \right) \left(y_3^{\alpha_3}\right) ,\\
\begin{pmatrix}
\alpha \\
\beta 
\end{pmatrix}= \begin{pmatrix}
\alpha_1 \\
\beta_1 
\end{pmatrix}\begin{pmatrix}
\alpha_2 \\
\beta_2 
\end{pmatrix}\begin{pmatrix}
\alpha_3 \\
\beta_3
\end{pmatrix}= \frac{\alpha!}{\beta!(\alpha-\beta)!},
\end{gather*}
and we recall the classical Leibniz formula 
$$
\partial^\alpha (fg) = \underset{\beta\leq\alpha}{\sum} (\partial^\beta f) (\partial^{\alpha-\beta} g).
$$
We introduce for all $n\geq1$,  $1\leq s \leq n$ and $\alpha \in \mathbb{N}_0^3$ with $1 \leq |\alpha|\leq n$ the set
$P_s(n,\alpha)$ of the 
$$(k_1,\cdots,k_s;l_1,\cdots,l_s)\in \N_0^3\times \cdots\times \N_0^3 \times \N \times \cdots \N, $$
such that 
$$0<|k_j|,\quad 0<l_1<\cdots<l_s,\quad \underset{j=1}{\overset{s}{\sum}}k_j=\alpha,\quad \underset{j=1}{\overset{s}{\sum}}|k_j| l_j=n .$$

We  now recall the following formula from \cite[Lemma 3.2]{CVW}, which is useful for the proof of Proposition \ref{apana}. 
\begin{lemme}[Multivariate Fa\`a di Bruno formula]\label{Lemme_FDB}
Let $g:\R \mapsto \R^3$ be a vector function $ C^\infty$ in the neighborhood of $x_0 \in \R$.
Let $h : \R^3 \to \R$ be a scalar function $ C^\infty$ in the neighborhood of $y_0=g(x_0)$. Define $f(x):=h(g(x)): \R \to \R$. Then
\begin{equation}\label{FDB_formula}
 f^{(n)}(x_0)=  n! \underset{1\leq |\alpha|\leq n}{\sum} (\partial^\alpha h)(g(x_0)) \underset{s=1}{\overset{n}{\sum}}\underset{P_s(n,\alpha)}{\sum}\underset{j=1}{\overset{s}{\prod}}\frac{(\left(\partial^{l_j} g)(x_0)\right)^{k_j}}{(k_j!) (l_j!)^{|k_j|}} ,
\end{equation}
holds for any $n\geq 1$, with the convention that $0^0:=1$.
\end{lemme}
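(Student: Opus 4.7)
The plan is to derive the formula by matching coefficients of $t^n$ in the formal Taylor expansion of $f(x_0+t) = h(g(x_0+t))$ around $x_0$; this converts the problem into a clean combinatorial identity rather than a tedious induction on $n$. Since $f^{(n)}(x_0)$ equals $n!$ times the coefficient of $t^n$ in this series, the entire task reduces to correctly enumerating its contributions.

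First I would expand $h$ at $y_0 = g(x_0)$ by the multivariate Taylor formula,
\[
h(y_0 + Y) \;=\; \sum_{\alpha \in \N_0^3} \frac{(\partial^\alpha h)(y_0)}{\alpha!}\, Y^\alpha,
\]
and apply it with $Y = G(t) := g(x_0+t) - g(x_0) = \sum_{l\geq 1} \tfrac{t^l}{l!}\, g^{(l)}(x_0)$. Because $G(t) = O(t)$ as $t \to 0$, any $|\alpha| > n$ contributes only at order $t^{|\alpha|}$ or higher and is irrelevant, while $\alpha = 0$ contributes only to the constant term; hence for $n\geq 1$ only indices with $1 \le |\alpha| \le n$ matter.

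Next I would expand $G(t)^\alpha = \prod_{i=1}^3 G_i(t)^{\alpha_i}$ componentwise via the multinomial theorem. After the factor $\alpha!/\prod_l k_l!$ arising from the multinomial coefficients cancels the $\alpha!$ in the denominator, and after the componentwise products reassemble into the multi-index notation $y^{k_l}$ introduced at the beginning of the section, one obtains
\[
\frac{G(t)^\alpha}{\alpha!} \;=\; \sum_{(k_l)_{l\ge 1}:\,\sum_l k_l = \alpha} \left( \prod_l \frac{(g^{(l)}(x_0))^{k_l}}{k_l!\,(l!)^{|k_l|}} \right) t^{\sum_l l |k_l|},
\]
where the sum runs over sequences of multi-indices with finitely many nonzero entries. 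Extracting the coefficient of $t^n$ forces $\sum_l l|k_l| = n$, and rewriting each such sequence by listing its nonzero positions $0 < l_1 < \cdots < l_s$ with values $k_j := k_{l_j}$ reproduces exactly the index set $P_s(n,\alpha)$ of the statement; the bound $s \le n$ follows from $s \le \sum_j l_j|k_j| = n$.

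The main, and essentially only, obstacle is the bookkeeping: one must verify that the passage from the componentwise sequences $(m_{i,l})_{i,l}$ used in the three multinomial expansions to the aggregated multi-index sequence $(k_l)_l$ introduces no stray combinatorial factor, and that labelling the nonzero entries in strictly increasing order $l_1 < \cdots < l_s$ accounts for every term exactly once. Once this is checked, multiplying the coefficient of $t^n$ by $n!$ yields the stated formula \eqref{FDB_formula}. No analytic difficulty arises, since one is merely comparing formal Taylor coefficients of a $C^\infty$ composition at a single point; equivalently the identity can be viewed as holding in the ring of formal power series.
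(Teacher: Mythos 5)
The paper does not actually supply a proof of this lemma: it cites \cite[Lemma~3.2]{CVW} and uses the formula as a black box, so there is no in-text argument to compare yours against. Your derivation by matching formal Taylor coefficients is nonetheless correct and self-contained. The multinomial expansion of $G(t)^\alpha=\prod_{i=1}^3 G_i(t)^{\alpha_i}$ does indeed produce exactly the factor $\alpha!/\prod_l k_l!$ that cancels the $\alpha!$ coming from the multivariate Taylor expansion of $h$; the reindexing from a finitely supported sequence $(k_l)_{l\geq 1}$ of multi-indices with $\sum_l k_l=\alpha$ and $\sum_l l|k_l|=n$ to an element of $\bigcup_{s=1}^n P_s(n,\alpha)$, obtained by listing the positions $l$ with $k_l\neq 0$ in strictly increasing order, is a bijection (the convention $0^0:=1$ is precisely what makes the factors with $k_l=0$ drop out silently); and the bound $s\leq n$ is immediate from $s\leq\sum_j l_j|k_j|=n$ since each $l_j\geq1$ and $|k_j|\geq1$. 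The only place you should be a touch more explicit is the passage from the formal power series identity to merely $C^\infty$ data: both sides of \eqref{FDB_formula} are universal polynomial expressions in the derivatives of $g$ and $h$ of order at most $n$, so it suffices to verify the identity after replacing $g$ and $h$ by their degree-$n$ Taylor polynomials at $x_0$ and $y_0$ respectively, where the power series manipulation is literally a finite algebraic computation. You gesture at this with the remark about formal power series, but a referee would want it spelled out in one sentence. With that added, the proof is complete and is an attractive, computation-free alternative to the induction or set-partition arguments usually given for Fa\`a di Bruno.
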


\subsection{H\"older,  Lipschitz and  log-Lipschitz spaces}
%
We introduce the following notations for Lipschitz, log-Lipschitz and H\"older spaces.
\begin{defi}[H\"older spaces]
Let $ \mu \in ]0,1]$ and $n\in \mathbb{N} $
$$
 C^{n,\mu}(\R^3):= \{ \psi \in  C^n(\mathbb{R}^3), \sum_{|\alpha|\leq n}\underset{x \in \R^3}{\sup}|\partial^\alpha \psi(x)|+
\sum_{|\alpha|= n}[\partial^\alpha \psi]_{0,\mu}<+\infty \},
$$
\end{defi}
where 
$$ [\psi]_{0,\mu} := \underset{x \neq y}{\sup}\, \frac{|\psi (x)-\psi (y)|}{|x-y|^\mu}.$$
The space $ C^{n,\mu}(\R^3)$ is endowed with the following norm
$$
\|\psi\|_{n,\mu}:= \underset{i=0}{\overset{n}{\sum}}\underset{x \in \R^3}{\sup}|\partial_i \psi(x)|+ [\partial_n \psi]_{0,\mu}.
$$
In the case of the space of Lipschitz functions $ C^{0,1}(\R^3) $ we use the notation 
$$
\Lip(\psi):= \underset{x \neq y}{\sup}\, \frac{|\psi(x)-\psi(y)|}{|x-y|},
$$
and set 
$$
\Lip(\R^3):= C^{0,1}(\R^3).
$$
We introduce the following real function $\ln_{-}$
\begin{equation}\label{def_ln-}
    \ln_{-}(s):=\max(0,-\ln(s))=\left\{
    \begin{array}{rcl}
    -\ln(s) & if & s\in[0,1], \\
    0 & if & s \geq 1 .
    \end{array}
    \right.
\end{equation}
\begin{defi}[Log-Lipschitz space]\label{def:LLspace}
We introduce the following notation for any function $\psi$

$$
\LLip(\psi):= \underset{x \neq y}{\sup} \frac{|\psi(x)-\psi(y)|}{|x-y|(1+\ln_{-}|x-y|)}.
$$
We say that a bounded continuous function $\psi$ is Log-Lipschitz if $\LLip(\psi)<+\infty$ and we set 
$$
\LLip(\R^3):=\{\psi \in  C(\R^3), \|\psi\|_\infty+ \LLip(\psi)<+\infty \}.
$$
\end{defi}

\subsection{Steady Stokes equations}
\label{SSE}
Let $(\mathcal{U},\mathcal{P})$ the Stokes Green function, also called the Oseen tensor,  given by 
\begin{equation}\label{Oseen}
\mathcal{U}(x)= \frac{1}{8\pi |x|} \left(Id + \frac{x \otimes x}{|x|^2} \right) \ \text{ and } \  \mathcal{P}(x)=- \frac{1}{4\pi} \frac{x}{|x|^2} .
\end{equation}
Let us observe that the Oseen tensor
$\mathcal{U}$ satisfies the following bound: 
\begin{equation}
    \label{B-prop}
    \left|\mathcal{U}(x) \right|\leq C  \frac{1}{|x|}, \: \: \forall x \in \R^3 \setminus \{0\} ,
\end{equation}
and  
\begin{equation}
    \label{Lip-prop}
    \left|\mathcal{U}(x)-\mathcal{U}(y) \right|\leq C |x-y| \left(\frac{1}{|x|^2}+ \frac{1}{|y|^2} \right), \: \: \forall x, y \in \R^3 \setminus \{0\} .
\end{equation}

Let us recall the following classical result on the Stokes equations, see for example  \cite[Theorem IV.2.1]{Galdi}.
\begin{prop}\label{class-Stokes}
Let $f $ in $L^p (\R^3) \cap L^1 (\R^3) $ with $p\geq  3$.
Then 
there is a unique solution of the Stokes equation
\begin{equation}
\label{cstokes}
\left\{
\begin{array}{rcll}
- \Delta u+ \nabla \pi &=& f \,,& \text{in $ \mathbb{R}^3$},\\
\div u & =&0\,,& \text{in $ \mathbb{R}^3$},\\
\underset{|x|\to \infty}{\lim}|u| &=& 0 .
\end{array}
\right.
\end{equation}
such that  $(u,\pi)\in W^{2,p} (\R^3)  \times W^{1,p} (\R^3) $ if $p>3$ and  $(\nabla u,\pi)\in W^{1,3} (\R^3)  \times W^{1,3} (\R^3) $, $u\in W^{1,q}(\R^3)$ for any $q>3$ in the case $p=3$.
Moreover,  
\begin{equation}
    \label{formu-conv}
    u=\mathcal{U} \star f \text{ and } \pi=\mathcal{P} \star f .
\end{equation}
\end{prop}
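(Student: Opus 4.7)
The plan is to recognize this as a classical result (essentially \cite[Theorem IV.2.1]{Galdi}) and indicate the key ingredients. For existence I would set $u := \mathcal{U} \star f$ and $\pi := \mathcal{P} \star f$ as in \eqref{formu-conv}, and first check that these convolutions are well-defined by splitting the integrand at $|x-y|=1$: on the near part, use \eqref{B-prop} together with $f\in L^p$ via Young's inequality, while on the far part, use the decay of the kernels together with $f\in L^1$. A direct distributional computation then shows that the pair $(u,\pi)$ solves \eqref{cstokes} and that $|u(x)|\to 0$ at infinity.

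The heart of the proof lies in the $L^p$ estimates, which I would organize according to the order of differentiation. For the top-order quantities $\nabla^2 u$ and $\nabla \pi$, the relevant kernels $\nabla^2 \mathcal{U}$ and $\nabla \mathcal{P}$ are of Calderón--Zygmund type (homogeneous of degree $-3$ with the appropriate cancellation property), so the corresponding convolution operators are bounded on $L^p(\R^3)$ for every $p\in(1,\infty)$, yielding
\[
\|\nabla^2 u\|_{L^p} + \|\nabla \pi\|_{L^p} \le C_p \|f\|_{L^p}.
\]
For the lower-order quantities $u$, $\nabla u$ and $\pi$, the kernels behave like Riesz potentials of order $2$, $1$ and $1$ respectively, so the Hardy--Littlewood--Sobolev inequality provides bounds of the form $\|u\|_{L^p}\le C\|f\|_{L^q}$ with $\tfrac{1}{q}=\tfrac{1}{p}+\tfrac{2}{3}$ (and analogous exponents for $\nabla u$ and $\pi$). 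When $p>3$, the dual exponent $q=3p/(p+2)$ lies in $(1,p)$, and interpolating $f\in L^1\cap L^p$ delivers \eqref{reg_u,p}.

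The critical case $p=3$ would be handled in the same spirit, except that the endpoint $q=1$ in the $u$--estimate is forbidden by Hardy--Littlewood--Sobolev: this is precisely why one only keeps the homogeneous controls $\nabla u \in W^{1,3}$ and $\pi\in W^{1,3}$ coming from Calderón--Zygmund, and then recovers $u\in W^{1,q}$ for every $q>3$ by applying Hardy--Littlewood--Sobolev with $f$ interpolated in the appropriate $L^s$, $s\in(1,3)$. Uniqueness I would establish by a Liouville argument: any distributional solution of the homogeneous Stokes system in $\R^3$ with velocity vanishing at infinity is, by interior elliptic regularity, polynomial in every component, and the decay condition forces this polynomial to be zero, giving $u\equiv 0$ and then $\pi\equiv 0$.

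The only mildly delicate points are verifying the Calderón--Zygmund cancellation condition for the singular kernels $\nabla^2 \mathcal{U}$ and $\nabla \mathcal{P}$, and carefully handling the endpoint in the critical case $p=3$; everything else is routine once the pointwise bounds \eqref{B-prop}--\eqref{Lip-prop} on the Oseen tensor are in hand.
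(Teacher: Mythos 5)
The paper does not prove Proposition \ref{class-Stokes}; it is recorded as a classical fact with a pointer to \cite[Theorem IV.2.1]{Galdi}. Your sketch --- defining $(u,\pi)$ by the Oseen convolution \eqref{formu-conv}, getting the top-order bounds $\|\nabla^2 u\|_{L^p}+\|\nabla\pi\|_{L^p}\le C\|f\|_{L^p}$ from Calderón--Zygmund theory for the homogeneous degree $-3$ kernels, recovering the lower-order norms through Hardy--Littlewood--Sobolev and interpolation in $L^1\cap L^p$, and concluding uniqueness by a Liouville argument --- is exactly the standard potential-theoretic proof underlying that reference, so your route is the right one. Two small cautions. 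First, solving $1/q = 1/p + 2/3$ gives $q = 3p/(3+2p)$, not $3p/(p+2)$; the qualitative conclusion you rely on (that $q\in(1,p)$ precisely when $p>3$, and that the forbidden HLS endpoint $q=1$ occurs at $p=3$) is nevertheless correct and is what drives the splitting into the cases $p>3$ and $p=3$. Second, the Liouville step should be phrased a bit more carefully: taking the divergence of the homogeneous momentum equation and using $\div u=0$ gives $\Delta\pi=0$, hence $\Delta^2 u = \nabla\Delta\pi = 0$, so $u$ is biharmonic; one then invokes a Liouville theorem for biharmonic fields with a growth bound, the decay at infinity supplying the bound. Merely saying ``polynomial by interior elliptic regularity'' skips the needed growth hypothesis, though the hypothesis is indeed available here.
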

%



\subsection{Sobolev embedding}
The reason why the value $p=3$ appears in our analysis is due to the following result, which establishes that the unique velocity solution  of the Stokes equation 
with data in $ L^1\cap L^p$ is Lipschitz when $p>3$ and  Log Lipschitz in 
 the critical case $p=3$.

\begin{prop}\label{prop_log_lip_u}
Let $\rho$ in $L^p (\R^3)  \cap L^1 (\R^3) $ with $p\geq 3$, and 
$(u,p) $  the  unique corresponding solution in $W^{2,p} (\R^3) \times W^{1,p} (\R^3) $ (resp. in $\dot{W}^{2,3} (\R^3)\cap \dot{W}^{1,3}(\R^3)\cap W^{1,q}(\R^3) \times W^{1,3} (\R^3) $ for $p=3$, $q\in(3,+\infty]$)
of the Stokes equation \eqref{cstokes}. 
Then 
\begin{equation}\label{uli}
    u \in 
     \begin{cases} 
\Lip (\R^3) ,   &  \text{  if } \, p > 3,
      \\  \LLip (\R^3) ,   &  \text{  if } \, p = 3,
\end{cases}
\end{equation}
with the following estimates: 
\begin{align} \label{esti-uli} 
    \Lip(u) \leq C \| \rho\|_{L^1 (\R^3)\cap L^p (\R^3)} ,   &  \text{  if } \, p > 3,
      \\  \label{esti-ulli} 
      \LLip (u) \leq C \| \rho\|_{L^1 (\R^3)\cap L^3 (\R^3)} &  \text{  if } \, p = 3 .
\end{align}
\end{prop}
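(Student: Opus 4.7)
The plan is to treat the two regimes separately. For the super-critical range $p>3$, Proposition~\ref{class-Stokes} produces $u\in W^{2,p}(\R^{3})$ with $\|u\|_{W^{2,p}}\leq C_{p}\|\rho\|_{L^{1}\cap L^{p}}$, and the Morrey embedding $W^{2,p}(\R^{3})\hookrightarrow C^{1,\mu}(\R^{3})$ with $\mu=1-3/p$ instantly furnishes $\|\nabla u\|_{L^{\infty}}\leq C_{p}\|\rho\|_{L^{1}\cap L^{p}}$, hence \eqref{esti-uli}. The same bound can alternatively be obtained from the convolution representation $u=\mathcal{U}\star(-\rho e_{3})$ together with the kernel bounds \eqref{B-prop}--\eqref{Lip-prop}, and the resulting argument is essentially the one I develop below for the critical case, but without the borderline logarithm.

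For the critical case $p=3$, the embedding into $C^{1}$ fails and I would rely on the convolution representation $u=\mathcal{U}\star(-\rho e_{3})$ from Proposition~\ref{class-Stokes}. Fix $x,y\in\R^{3}$ and set $r=|x-y|$. Starting from
\[
u(x)-u(y) \;=\; -\int_{\R^{3}}\bigl[\mathcal{U}(x-z)-\mathcal{U}(y-z)\bigr]\rho(z)\,e_{3}\,dz,
\]
I would split (for $r\leq 1/2$) the integration domain into a close zone $\{|x-z|\leq 2r\}\cup\{|y-z|\leq 2r\}$, an intermediate annulus $\{2r<|x-z|\leq 1\}$ on which $|y-z|\geq |x-z|/2$, and a far zone $\{|x-z|>1\}$. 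On the close zone, \eqref{B-prop} bounds each of $\mathcal{U}(x-z)$ and $\mathcal{U}(y-z)$ by $C|\cdot|^{-1}$; H\"older's inequality in the pair $(3,3/2)$ together with $\int_{|w|\leq 3r}|w|^{-3/2}\,dw\leq C r^{3/2}$ yields a contribution of size $Cr\|\rho\|_{L^{3}}$. On the far zone, bounding $1/|x-z|^{2}$ by $1$ inside \eqref{Lip-prop} and using the $L^{1}$ bound on $\rho$ yields $Cr\|\rho\|_{L^{1}}$.

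The decisive piece is the intermediate annulus, where \eqref{Lip-prop} produces the factor $r/|x-z|^{2}$ and H\"older with exponents $3$ and $3/2$ reduces matters to
\[
\Bigl(\int_{2r}^{1} 4\pi\, s^{2}\cdot s^{-3}\,ds\Bigr)^{2/3} \;=\; \bigl(C\ln(1/(2r))\bigr)^{2/3},
\]
so this piece is controlled by $C r\,(\ln(1/r))^{2/3}\|\rho\|_{L^{3}}\leq C r(1+\ln_{-}r)\|\rho\|_{L^{3}}$. For $r>1/2$ the intermediate annulus is empty and the close and far contributions alone deliver a linear-in-$r$ bound. Collecting the three pieces and dividing by $|x-y|(1+\ln_{-}|x-y|)$ yields \eqref{esti-ulli}. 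The only genuinely delicate point in the whole proof is precisely this critical logarithmic divergence in the intermediate zone: one has to verify that the fractional power $2/3$ of $\ln(1/r)$ produced by H\"older is indeed majorized by the log-Lipschitz modulus; everything else is a routine H\"older computation once the Oseen kernel bounds \eqref{B-prop}--\eqref{Lip-prop} are in hand.
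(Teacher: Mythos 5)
Your proof is correct and follows essentially the same route as the paper: for $p>3$ the Morrey/Sobolev embedding of $W^{2,p}(\R^3)$ into $W^{1,\infty}(\R^3)$, and for $p=3$ the convolution representation $u=\mathcal{U}\star(-\rho e_3)$ together with the kernel bounds \eqref{B-prop}--\eqref{Lip-prop}, a near/intermediate/far decomposition with threshold $|x-y|$, and H\"older's inequality with exponents $(3,3/2)$. The only point worth flagging is that to conclude $u\in\LLip(\R^3)$ (not just $\LLip(u)<\infty$) you also need $\|u\|_{L^\infty}<\infty$; the paper establishes this explicitly with a near/far split of the convolution, while in your write-up it is left implicit (it also follows from the hypothesis $u\in W^{1,q}(\R^3)$ for some $q>3$). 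Your intermediate-annulus bound with the factor $(\ln(1/r))^{2/3}$ is in fact slightly sharper than the paper's $(1+\ln_- r)$, and correctly absorbs into the log-Lipschitz modulus.
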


\begin{proof}
In the case where  $p>3$ we have the embedding $W^{2,p} (\mathbb{R}^3) \hookrightarrow W^{1,\infty} (\mathbb{R}^3)$, so that 
\begin{eqnarray} \label{cotestokes}
\|u\|_{W^{1,\infty} (\R^3)}\leq C\|u\|_{W^{2,p} (\R^3)} \leq C \| \rho\|_{L^1 (\R^3)\cap L^p (\R^3)}.
\end{eqnarray}

Let us now deal with the case where $p=3$.

We provide here a self-contained proof which we will adapt later on in the proof of 
Theorem \ref{thm}. 
First, by the convolution formula  \eqref{formu-conv}, we have 
\begin{align*}
   |u(x)| &\leq C \int_{B(x,1)} \frac{\rho(z)}{|x-z|} dz + \int_{B(x,1)^c}\frac{\rho(z)}{|x-z|} dz 
   \\  &\leq  \left(\int_{B(x,1)}\frac{dz}{|x-z|^2} \right)^{1/2} \|\rho\|_{L^2(\R^3)}+\|\rho\|_{L^1(\R^3)}
  \\  &\leq  C \|\rho\|_{L^1(\R^3) \cap L^3(\R^3)}, 
\end{align*}
where the notation $\cdot^c$ stands for the complementary set in $\R^3$.

Second,  by the convolution formula  \eqref{formu-conv},  we obtain that for
any  $x\neq x'$ in $\mathbb{R}^3$,
\begin{align*}
|u(x)-u(x') |&\leq 
I_1 +I_2 ,
\end{align*}
where
\begin{align*}
I_1 &:= \int_{B(x,\epsilon) \cup B(x',\epsilon) }  \left|\mathcal{U}(x-u)-\mathcal{U}(x'-u)\right|\rho(u)du , \\
I_2 &:= \int_{B(x,\epsilon)^c \cap B(x',\epsilon)^c} \left|\mathcal{U}(x-u)-\mathcal{U}(x'-u)\right|\rho(u)du ,
\end{align*}
with $\epsilon :=|x-x'|$. 

For $I_1$, by \eqref{B-prop}, 
we have 
\begin{align*}
I_1& \leq C \Big( \int_{B(x,\epsilon)} \frac{1}{|x-u|}\rho(u)du + \int_{B(x',\epsilon)} \frac{1}{|x-u|}\rho(u)du
\\ &\quad + \int_{B(x,\epsilon)} \frac{1}{|x'-u|}\rho(u)du + \int_{B(x',\epsilon)} \frac{1}{|x'-u|}\rho(u)du \Big) .
\end{align*}
Since  
$B(x,\eps) \subset B(x',2\eps) $ and $ B(x',\eps) \subset B(x,2\eps)$ by the triangle inequality and the definition of $\epsilon$, 
 we have   that 
$$
I_1 \leq C \left( \int_{B(x,2\epsilon) } \frac{1}{|x-u|}\rho(u)du + \int_{B(x',2\epsilon)} \frac{1}{|x'-u|}\rho(u)du \right) \leq C \epsilon \|\rho\|_{L^3 (\R^3)}  ,
$$
by H\"older's inequality.

For $I_2$, by \eqref{B-prop}, 
we have
\begin{align*}
I_2 &\leq C |x-x'| \int_{B(x,\epsilon)^c \cap B(x',\epsilon)^c}    \left(\frac{1}{|x-u|^2}+\frac{1}{|x'-u|^2} \right)\rho(u) du\\
&\leq C |x-x'|\left( \int_{B(x,\epsilon)^c} \frac{1}{|x-u|^2} \rho(u) du+ \int_{B(x',\epsilon)^c} \frac{1}{|x'-u|^2} \rho(u) du\right) .
\end{align*}
If $\epsilon=|x-x'|\geq 1$ we have 
$I_2\leq C |x-x'|\| \rho\|_1$ and this yields the desired result. 
On the other hand, if $\epsilon<1$, then 
\begin{align*}
 \int_{B(x,\epsilon)^c} \frac{1}{|x-u|^2} \rho(u) du &\leq \int_{B(x,2)\setminus B(x,\epsilon) }   \frac{1}{|x-u|^2} \rho(u) du +  \int_{B(x,2)^c} \frac{1}{|x-u|^2} \rho(u) du\\
 &\leq C\|\rho\|_{L^3 (\R^3)} | \log(\epsilon)|+ \|\rho\|_{L^1 (\R^3)} ,
\end{align*}
by H\"older's inequality.
By combining the estimates we conclude the proof of \eqref{esti-ulli}. 
\end{proof}


\subsection{Flow map and Osgood's lemma}
The following result on the existence and uniqueness of a classical flow in the case of Lipschitz or Log-Lipschitz vector fiels is quite classical, let us refer for example to \cite{BCD}. 
\begin{prop}\label{prop_flow}
Let   $u$ a vector field in \eqref{ulipoullip}. 
 Then there exists a unique corresponding flow
$X$ such as in \eqref{uflox} and satisfying \eqref{cflow}. 
\end{prop}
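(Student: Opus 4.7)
The plan is to treat separately the two regularity regimes described in \eqref{ulipoullip}, since the critical case $p=3$ requires Osgood's lemma in place of the classical Cauchy-Lipschitz theorem used when $p>3$.

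I would first establish existence and uniqueness of a global classical flow. When $p>3$, the bound \eqref{esti-uli} combined with the continuity of $t\mapsto u(t,\cdot)$ in $C^{1,\mu}(\R^3)$ ensures that $u$ is Lipschitz in space with a locally integrable (indeed continuous in time) Lipschitz constant. Picard iteration then yields a unique local flow, and the $L^\infty$ bound on $u$ inherited from the embedding $W^{2,p}\hookrightarrow W^{1,\infty}$ (see \eqref{cotestokes}) precludes finite-time blow-up, giving a global flow. When $p=3$, $u(t,\cdot)$ is only log-Lipschitz, with bound \eqref{esti-ulli}. The modulus $\omega(r):=r(1+\ln_{-}(r))$ is Osgood, i.e., $\int_0^1 dr/\omega(r)=+\infty$. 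Applying Osgood's lemma to $t\mapsto|X_1(t)-X_2(t)|$ for two putative solutions yields uniqueness; existence is then obtained by regularising $u$ by convolution, invoking Cauchy-Lipschitz for the smooth approximations, and extracting an Arzel\`a-Ascoli limit, uniqueness for the limit equation making this limit independent of the subsequence.

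It remains to propagate the required spatial H\"older regularity. When $p>3$, differentiating \eqref{cflow-def} in $x$ gives
$$\partial_t \nabla_x X(t,s,x)=\nabla u(t,X(t,s,x))\,\nabla_x X(t,s,x),\qquad \nabla_x X(s,s,x)=\Id,$$
and a Gr\"onwall argument, together with the $C^{0,\mu}$ regularity of $\nabla u$ and the Lipschitz regularity already obtained for $X$, yields $X-\Id\in C([0,+\infty),C^{1,\mu}(\R^3))$. When $p=3$, given $x\neq y$ and setting $\delta(t):=|X(t,0,x)-X(t,0,y)|$, the log-Lipschitz bound on $u$ yields, as long as $\delta(t)\leq 1$,
$$\delta'(t)\leq C\,\delta(t)\bigl(1-\ln \delta(t)\bigr).$$
Dividing by $\delta(t)(1-\ln\delta(t))$ and integrating produces $1-\ln\delta(t)\leq (1-\ln\delta(0))\,e^{Ct}$, that is, $\delta(t)\leq e\,\delta(0)^{e^{-Ct}}$, which is precisely the H\"older regularity with exponent $r_t=e^{-Ct}$.

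The main difficulty concentrates in the critical regime $p=3$: one cannot appeal to any bound on $\nabla u$ in $L^\infty$ — such a bound fails — and must instead exploit the Osgood character of the log-Lipschitz modulus twice, once to obtain uniqueness and once to quantify the deterioration of the H\"older exponent of the flow, which decays exponentially fast to $0$ with time.
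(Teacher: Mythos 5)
The paper does not actually prove Proposition~\ref{prop_flow}: it states it as a classical fact and refers to \cite{BCD}. Your proposal supplies the standard argument that such a reference would give (Picard iteration plus Gr\"onwall for $p>3$, Osgood's lemma plus regularisation and Arzel\`a--Ascoli for $p=3$), and this is also exactly the machinery the paper deploys implicitly in Section~5 when constructing the limit of the approximate flows $X^n$ --- compare in particular your Osgood computation for $\delta(t)$ with the derivation of \eqref{equicontinuite_x_3}. So the strategy is the same; you have just made explicit what the paper delegates to a citation.

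One sign slip in the $p=3$ H\"older step: integrating $\delta'/(\delta(1-\ln\delta))\leq C$ gives $\tfrac{d}{dt}\bigl[-\ln(1-\ln\delta)\bigr]\leq C$, hence $1-\ln\delta(t)\geq(1-\ln\delta(0))e^{-Ct}$, which is a \emph{lower} bound on $1-\ln\delta(t)$; this is the inequality that yields the upper bound $\delta(t)\leq e^{1-e^{-Ct}}\delta(0)^{e^{-Ct}}\leq e\,\delta(0)^{e^{-Ct}}$. The inequality you wrote, $1-\ln\delta(t)\leq(1-\ln\delta(0))e^{Ct}$, is also true (it comes from the lower bound $\delta'\geq -C\delta(1-\ln\delta)$) but controls $\delta$ from \emph{below}, so it does not imply the estimate you state after ``that is.'' Your final conclusion $\delta(t)\leq e\,\delta(0)^{e^{-Ct}}$ is nonetheless correct; just fix the intermediate inequality. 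Everything else, including the global-in-time continuation via the $L^\infty$ bound on $u$ and the propagation of $C^{1,\mu}$ regularity in the subcritical case, is sound.
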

In order to recall Osgood Lemma we introduce first the notion of Osgood modulus of continuity, see \cite[Definition 3.1]{BCD} for more details.
\begin{defi}
Let $a>0$. A modulus of continuity $ \eta: [0,a] \to [0,+\infty[$ is any continuous non-decreasing function vanishing at $0$ and continuous at $0$.
We say that $\eta$ is an Osgood modulus of continuity if in addition 
$$\int_0^a \frac{dz}{\eta(z)} =  \infty.
$$
\end{defi}
We recall Osgood lemma which will be useful for the proof of Theorems \ref{thm0} and \ref{thm} in the critical case $p=3$.
\begin{lemme}[Osgood Lemma]\label{Osgood}
Let $f$ a measurable function from $[t_0,T]$ to $[0,a]$, $v$ a locally integrable function from $[t_0,T]$ to $\R^+$ and $ \eta: [0,a] \to [0,+\infty[$ an Osgood modulus of continuity. Assume that there exists $c \geq0$ such that for a.e $t\in[t_0,T]$
$$
f(t) \leq c+\int_{t_0}^t v(s)\, \eta(f(s)) ds.
$$
\begin{enumerate}
\item If $c>0$ then we have 
$$
\mathcal{M}(c) \leq \mathcal{M}(f(t)) + \int_{t_0}^t v(s) ds \text{ with } \mathcal{M}(x)=\int_x^a \frac{dz}{\eta(z)}.
$$
\item If $c=0$ then $f=0$ a.e on $[t_0,T]$.
\end{enumerate}
\end{lemme}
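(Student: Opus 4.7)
The plan is to reduce the integral inequality to a differential one by working with the antiderivative on the right-hand side, and then integrate using the change of variables $\mathcal{M}$, whose logarithmic-type behaviour at $0$ (encoded by the Osgood condition) will handle the degenerate case $c=0$.

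First, for the case $c>0$, I would set
\begin{equation*}
F(t) := c + \int_{t_0}^t v(s)\, \eta(f(s))\, ds,
\end{equation*}
so that $F$ is absolutely continuous on $[t_0,T]$, nondecreasing (since $v\geq 0$ and $\eta\geq 0$), bounded below by $c>0$, and satisfies $f(t)\leq F(t)$ for a.e.\ $t$. Because $\eta$ is nondecreasing, monotonicity gives $\eta(f(t))\leq \eta(F(t))$ for a.e.\ $t$, so a.e.\
\begin{equation*}
F'(t) = v(t)\,\eta(f(t)) \leq v(t)\,\eta(F(t)).
\end{equation*}
Since $F(t)\geq c>0$ and $\eta$ is positive on $(0,a]$, I can divide by $\eta(F(t))$ and note that $\mathcal{M}'(x)=-1/\eta(x)$, so by the chain rule
\begin{equation*}
-\frac{d}{dt}\,\mathcal{M}(F(t)) = \frac{F'(t)}{\eta(F(t))} \leq v(t) \qquad \text{a.e.\ on }[t_0,T].
\end{equation*}
Integrating from $t_0$ to $t$ and using $F(t_0)=c$ yields $\mathcal{M}(c)-\mathcal{M}(F(t))\leq \int_{t_0}^t v(s)\,ds$. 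Finally, because $\mathcal{M}$ is nonincreasing on $(0,a]$ and $f(t)\leq F(t)$, we have $\mathcal{M}(F(t))\leq \mathcal{M}(f(t))$, which gives the desired inequality.

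For the case $c=0$, I would argue by the $\varepsilon$-perturbation trick. For any $\varepsilon>0$, the hypothesis still gives $f(t)\leq \varepsilon + \int_{t_0}^t v(s)\,\eta(f(s))\,ds$ for a.e.\ $t$, so the first part applied with $c=\varepsilon$ yields
\begin{equation*}
\mathcal{M}(\varepsilon) \leq \mathcal{M}(f(t)) + \int_{t_0}^t v(s)\,ds.
\end{equation*}
The Osgood condition $\int_0^a dz/\eta(z)=+\infty$ means precisely that $\mathcal{M}(\varepsilon)\to +\infty$ as $\varepsilon\to 0^+$, whereas $\int_{t_0}^t v(s)\,ds$ is finite by local integrability of $v$. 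Letting $\varepsilon\to 0^+$ therefore forces $\mathcal{M}(f(t))=+\infty$ for a.e.\ $t$. Since $\mathcal{M}$ is finite on $(0,a]$, this is possible only if $f(t)=0$ almost everywhere on $[t_0,T]$.

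The only mildly delicate point is the passage from the integral inequality to the differential one at the level of almost-everywhere identities: one must justify that $F$ is a.e.\ differentiable with $F'(t)=v(t)\eta(f(t))$, which follows from absolute continuity of $F$ and the fact that the integrand is locally integrable (Lebesgue differentiation). Everything else is a straightforward monotonicity argument together with the defining property of an Osgood modulus.
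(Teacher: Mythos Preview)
Your proof is correct and follows the standard route: introduce the absolutely continuous majorant $F$, differentiate, separate variables via $\mathcal{M}$, and handle $c=0$ by an $\varepsilon$-perturbation combined with $\mathcal{M}(\varepsilon)\to+\infty$. The paper does not supply its own proof of this lemma; it is stated as a classical result with a reference to \cite{BCD}, where exactly this argument appears. One small technical remark: as written, $\mathcal{M}$ is defined on $(0,a]$ while $F(t)$ could in principle exceed $a$; this is harmless since one may extend $\eta$ beyond $a$ by a constant without affecting the conclusion for $f$, which takes values in $[0,a]$.
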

%

\section{Proof of Theorem \ref{thm} }
\label{sec-stabb}

Let $p\geq 3$, $R>0$,  a couple of initial densities 
  $ (\rho_1^0 , \rho_2^0 )$   in $E_p $ with  $ \max_{i=1,2} (\|\rho_i^0\|_{ L^1 (\R^3) },\|\rho_i^0\|_{ L^p (\R^3) }) \leq R$, 
and $  (u_1,\rho_1) $ and $ (u_2 ,\rho_2) $ in $C([0,+\infty),W^{2,p} \times  E_p)$ 
satisfying the transport-Stokes equation \eqref{TS} 
  for any $t\geq 0$, with, respectively,  $ \rho_1^0 $ and $\rho_2^0 $ as initial condition.

 Let us recall that 
  if $\rho_2$ is absolutely continuous with respect to the Lebesgue measure then 
\begin{equation}
    \label{W-infi}
 W_1 (\rho_1,\rho_2)=  \underset{T\# \rho_2=\rho_1} \inf \int_{\R^3} |T(x)-x|d\rho_2(x) ,
  \end{equation}
where the infimum runs over all measurable transport maps $T : \mathbb{R}^3 \to \mathbb{R}^3$ and there exists an optimal transport map $T^*$ for which the infimum is reached, see \cite[Theorem 1.5]{Santambrogio}. We emphasize that we use transport maps in the whole analysis for the sake of lightness and that one can adapt all the arguments using only optimal transport plans which are known to exist, see \cite[Theorem 1.7]{Santambrogio_book}.

We introduce $T_0$ the optimal transport map such that
\begin{equation}
    \label{enplu}
    \rho_1^0=T_0\#\rho_2^0 ,
    \end{equation}
and 
\begin{equation}
    \label{azero}
W_1(\rho_1^0,\rho_2^0)=\int_{\R^3} |T_0(x)-x| \rho_2^0(x) dx. 
\end{equation}
 For  $i=1,2$, let 
   $X_i$ associated  the characteristic flow with $u_i$ by 
\begin{equation} \label{cflow}
\left \{
\begin{array}{rcll}
\partial_t X_i(t,s,x) & = & u_i(s, X_i(t,s,x)),&\: \: \forall \, t,s\in[0,+\infty) , \\
X_i(s,s,x) & = & x,& \: \: \forall \, s \in [0,+\infty) ,
\end{array}
\right.
\end{equation}
For  $i=1,2$,
for any $t$ in $[0,+\infty)$, 
\begin{equation}
    \label{pousse}
    \rho_i (t,\cdot) =X_i (t,0,\cdot) \#\rho_i^0 .
    \end{equation}
We set, for any $t\geq 0$, 
\begin{equation}
    \label{RG}
T_t :=X_1(t,0,\cdot) \circ T_0 \circ X_2(0,t,\cdot)  .
\end{equation}
Let us define, for any $t\geq 0$, 
\begin{equation*}
 Q(t) :=  \int_{\R^3} |T_t(x) -x| \rho_2^t(dx)  .
\end{equation*}
By   \eqref{azero}, we observe that 
\begin{equation}
    \label{prem-zero}
     W_1 (\rho_1^0,\rho_2^0) =Q(0) .
\end{equation}
In what follows we use the shortcut $\rho_i^t:=\rho_i(t,\cdot)$ for $t\geq 0 $ and $i=1,2$.
From  \eqref{pousse},   \eqref{enplu} and \eqref{RG}  we deduce that for any $t\geq 0$, 
\begin{equation}
    \label{enplusse}
  \rho_1^t=T_t \#\rho_2^t   .
    \end{equation}
 We therefore deduce from  \eqref{W-infi} and  \eqref{enplusse}
 that for any $t\geq 0$, 
\begin{equation}
    \label{prem}
     W_1 (\rho_1^t,\rho_2^t) \leq  Q(t) . 
\end{equation}
Now, using \eqref{pousse} (with $i=2$) and \eqref{RG} (which implies that for any $t\geq 0$, 
 $T_t \circ X_2(t,0,\cdot)=X_1(t,0,\cdot) \circ T_0 $), 
 we have for any $t\geq 0$, 
\begin{equation}
    \label{reexpQ}
Q(t) 
= \int_{\R^3} | X_1(t,0,T_0 (x)) -X_2(t,0,x)| \rho_2^0(dx).
\end{equation}
From \eqref{cflow}
we deduce that for any $x$ in $\R^3$, for any $t\geq 0$, 
\begin{align} \nonumber
| X_1(t,0,T_0 (x)) -X_2(t,0,x)| & \leq |T_0 (x)- x| + \int_0^t |u_1(s,{X}_1(s,0,T_0 (x))) -u_2(s,{X}_2(s,0,x))| ds 
\\ \nonumber & \leq |T_0 (x)- x| + \int_0^t |u_1(s,{X}_1(s,0,T_0 (x))) -u_1(s,{X}_2(s,0,x))| ds
\\ \label{esti-arep} 
&  \quad + \int_0^t |u_1(s,{X}_2(s,0,x)) -u_2(s,{X}_2(s,0,x))| ds .
\end{align}
To go further we need to distinguish the case where $p> 3$ and the case where $p=3$. 
\subsection{Case where \texorpdfstring{$p> 3$}{p>3}}
Let us first deal  with  the case where  $p> 3$. 
Then by Proposition \ref{prop_log_lip_u}, the velocity field $u_1$ is Lipshitz and therefore for any $t\geq 0$, 
\begin{align*}
| X_1(t,0,T_0 (x)) -X_2(t,0,x)| 
& \leq |T_0 (x)- x| + \int_0^t \Lip(u_1(s)) |{X}_1(s,0,T_0 (x)) -{X}_2(s,0,x)| ds\\ \quad 
&+ \int_0^t |u_1(s,{X}_2(s,0,x)) -u_2(s,{X}_2(s,0,x))| ds .
\end{align*}
By integration over $\R^3$ with respect to the measure $\rho_2^0(dx)$, we deduce by 
 using \eqref{pousse} and \eqref{reexpQ} that for any $t\geq 0$, 
\begin{equation} \label{ineq-Q}
Q(t) \leq Q(0) + \int_0^t \Lip(u_1(s)) Q(s) ds+ \int_0^t \int  |u_1(s,x) -u_2(s,x)| \rho_2^s(dx) ds .
\end{equation}
 Let us now estimate the last term in the right hand side of \eqref{ineq-Q}.
By using the convolution formula  \eqref{formu-conv}
 and   \eqref{enplusse}
we obtain that for any $s\geq 0$, 
\begin{equation} \label{demen}
\int_{\mathbb{R}^3} |u_1(s,x)-u_2(s,x)| \rho^s_2(x)dx  
=  \int_{\mathbb{R}^3} \left| \int_{\mathbb{R}^3}( \mathcal{U}( x-T_s (y)) - \mathcal{U}( x-y) )e_3\rho_2^s(y)dy  \right| \rho_2^s(dx) .
\end{equation}
Thus we deduce from  \eqref{demen} and \eqref{Lip-prop} that for any $s\geq 0$, 
\begin{align*}
 &\int_{\mathbb{R}^3} |u_1(s,x)-u_2(s,x)| \rho^s_2(x)dx  
 \\ &\quad \quad \leq C \int_{\mathbb{R}^3} \left| \int_{\mathbb{R}^3}|T_s (y)-y| \left(\frac{1}{|x-y|^2}+ \frac{1}{|x-T_s (y)|^2} \right) \rho_2^s(y)dy  \right| \rho_2^s(dx),
 \end{align*}
 Therefore, by Fubini's principle, for any $s\geq 0$, 
  \begin{align} \label{triple}
  & \int_{\mathbb{R}^3} |u_1(s,x)-u_2(s,x)| \rho_2^s(x)dx  \leq  I_1 (s) Q(s) , 
\end{align}
  where
 \begin{equation*}
I_1 (s):= \sup_y \int_{\mathbb{R}^3} \left(\frac{1}{|x-y|^2}+ \frac{1}{|x-T_s (y)|^2} \right) )\rho_2^s (x)dx .
\end{equation*}
The integral corresponding to the first term in  $I_1 (s)$ can be estimated for all $y$ in $\R^3$ and for all $s\geq 0$ as follows:
\begin{align*}
 \int_{\mathbb{R}^3} \frac{1}{|x-y|^2}\rho_2^s(x)dx &\leq \int_{B(y,1)}\frac{1}{|x-y|^2} \rho_2^s(x) dx  + \int_{B(y,1)^c} \rho_2^s(dx) \\
 & \leq \left(\int_{B(y,1)} \frac{1}{|x-y|^{2p'}}\right)^{1/p'} \| \rho_2^s\|_{L^p(\R^3)} + \|\rho_2^s\|_{L^1(\R^3)} ,
 \end{align*}
 by H\"older's inequality, with $p'= p/(p-1)<3/2$, so that $z\mapsto z^{2-2p'}$ is integrable near the origin, and therefore
\begin{align*}
 \int_{\mathbb{R}^3} \frac{1}{|x-y|^2}\rho_2^s(x)dx 
 &\leq C\| \rho_2^s\|_{L^p(\R^3)} \int_0^1 z^{2-2p'} dz + \|\rho_2^s\|_{L^1(\R^3)} \leq C(\| \rho_2^s\|_{L^p(\R^3)}+ \|\rho_2^s\|_{L^1(\R^3)}).
\end{align*}
Proceeding similarly for the second term in $I_1 (s)$, we arrive at
\begin{equation}
    \label{borneI1}
    I_1 (s)  \leq C (\| \rho_2^s\|_{L^p(\R^3)}+ \|\rho_2^s\|_{L^1(\R^3)}),
\end{equation}
for any $s\geq 0$.
Combining \eqref{ineq-Q}, \eqref{triple} and \eqref{borneI1}  we get that for any $t\geq 0$, 
\begin{align*}
Q(t) \leq Q(0) + \int_0^t \Lip(u_1(s)) Q(s) ds+ \int_0^t C (\| \rho_2^s\|_{L^p(\R^3)}+ \|\rho_2^s\|_{L^1(\R^3)})Q(s) ds .
\end{align*}
By using $\|\rho_i^t\|_{L^q(\R^3)}=\|\rho_i^0\|_{L^q(\R^3)}$ for $q=1,p$ and $i=1,2$, \eqref{cotestokes} and a Gronwall-type argument, and combining this with  \eqref{prem} and \eqref{prem-zero} we arrive at \eqref{sta-geq-3}.
This concludes the proof of 
Theorem \ref{thm} in the case where  $p> 3$.

\subsection{Case where \texorpdfstring{$p= 3$}{p=3}}
Let us now deal  with  the critical case where  $p= 3$. 
By using \eqref{esti-arep} and Proposition \ref{prop_log_lip_u}, 
 we obtain that for any $x$ in $\R^3$, for any $t\geq 0$, 
 \begin{align*}
    & |X_1(t,0,T_0 (x))-X_2(t,0,x)| \leq |T_0 (x)-x|
    \\ & \quad +   C  \int_0^t |X_1(s,0,T_0 (x)) - X_2(s,0,x) |(1+\ln_{-}(|X_1(s,0,T_0 (x)) - X_2(s,0,x) |) ds\\
   &\quad +
     \int_0^t |u_1(s,X_2(s,0,x) - u_2(s,X_2(s,0,x) | ds ,
 \end{align*}
where $C=C(\|\rho_1^0\|_{L^3 (\R^3) \cap L^1 (\R^3)} )>0$. 
By integration over $\R^3$ with respect to the measure $\rho_2^0(dx)$, we deduce by 
 using \eqref{pousse} and \eqref{reexpQ} that for any $t\geq 0$, 
 \begin{gather*}
Q(t) \leq Q(0) 
\\  +  C \int_0^t\int_{\mathbb{R}^3} |X_1(s,0,T_0 (x)) - X_2(s,0,x) |(1+\ln_{-}(|X_1(s,0,T_0 (x)) - X_2(s,0,x) |) \rho_2^0(dx) ds
\\  +    \int_0^t \int_{\mathbb{R}^3}|u_1(s,X_2(s,0,x) - u_2(s,X_2(s,0,x) |\rho_2^0(dx) ds    .
 \end{gather*}
 Since  $z\mapsto z(1+\ln_{-}z)$ is concave for $z\geq 0$, it follows from Jensen's inequality 
 and \eqref{reexpQ} that for any $s\geq 0$, 
 \begin{align*}
\int_{\mathbb{R}^3} |X_1(s,0,T_0 (x)) - X_2(s,0,x) |(1+\ln_{-}(|X_1(s,0,T_0 (x)) - X_2(s,0,x) |) \rho_2^0(dx)
\\ \quad  \leq  Q(s)(1+\ln_{-}Q(s))
 .
 \end{align*}
 Therefore, for any $t\geq 0$, 
 \begin{align}\label{ineq-Q_p=3}
Q(t)
&\leq Q(0)+ C \int_0^t Q(s)(1+\ln_{-}Q(s)) ds+    \int_0^t \int_{\mathbb{R}^3}|u_1(s,x) - u_2(s,x) |\rho_2^s(x)dx ds .
 \end{align}
To bound the last term above, we establish the following result.
\begin{lemme}\label{LLint}
 For any $t\geq 0$, 
\begin{equation}
    \label{esti-LLint}
    \int|u_1(t,x)-u_2(t,x)|\rho_2(t,x)dx  \leq C(\|\rho_2^t\|_{L^1\cap L^3}) Q(t) (1+\ln_{-} Q(t)).
\end{equation}

\end{lemme}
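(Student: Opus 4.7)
The plan is to mimic the near/far splitting used in the proof of Proposition \ref{prop_log_lip_u}, while keeping track of the pointwise displacement $\eps(y) := |T_s(y) - y|$ whose $\rho_2^s$-average is, directly from the definition of $Q$, equal to $Q(s)$. From the convolution formula \eqref{formu-conv} together with the identity $\rho_1^s = T_s \# \rho_2^s$ inherited from \eqref{enplusse}, I would first write
\begin{equation*}
u_1(s,x) - u_2(s,x) = -\int_{\R^3} \bigl(\mathcal{U}(x-T_s(y)) - \mathcal{U}(x-y)\bigr) e_3\, \rho_2^s(y)\,dy,
\end{equation*}
so that Fubini's theorem reduces the proof to establishing, for each fixed $y$, the pointwise-in-$y$ estimate
\begin{equation*}
K(y) := \int_{\R^3} \bigl|\mathcal{U}(x-T_s(y))-\mathcal{U}(x-y)\bigr|\rho_2^s(x)\,dx \leq C(\|\rho_2^s\|_{L^1\cap L^3})\,\eps(y)\bigl(1+\ln_{-}\eps(y)\bigr).
\end{equation*}

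To obtain this bound I would split the $x$-integration at the scale $\eps(y)$, in the same spirit as in the proof of Proposition \ref{prop_log_lip_u}. On the near region $B(y,\eps(y)) \cup B(T_s(y),\eps(y))$ the bound $|\mathcal{U}(z)| \leq C/|z|$ from \eqref{B-prop}, together with H\"older's inequality at the conjugate exponents $3$ and $3/2$ (using $\int_{B(y,\eps)}|x-y|^{-3/2}\,dx \leq C\eps^{3/2}$), produces a contribution of order $\eps(y)\|\rho_2^s\|_{L^3}$. On the far region the Lipschitz-type bound \eqref{Lip-prop} extracts a prefactor $\eps(y)$ and reduces the task to controlling $\int_{B(y,\eps(y))^c} |x-y|^{-2}\rho_2^s(x)\,dx$ and its counterpart centred at $T_s(y)$; splitting each of these into $B(y,2)\setminus B(y,\eps(y))$, where H\"older yields the logarithmic factor $\|\rho_2^s\|_{L^3}(1+\ln_{-}\eps(y))$, and $B(y,2)^c$, where $|x-y|^{-2}\leq 1/4$ gives $\|\rho_2^s\|_{L^1}$, furnishes the claimed modulus of continuity.

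Integrating in $y$ against $\rho_2^s$ I would then obtain
\begin{equation*}
\int_{\R^3} |u_1-u_2|(x)\rho_2^s(x)\,dx \leq C(\|\rho_2^s\|_{L^1\cap L^3})\int_{\R^3} \eps(y)\bigl(1+\ln_{-}\eps(y)\bigr)\rho_2^s(y)\,dy,
\end{equation*}
and since $\rho_2^s$ is a probability measure with $\int \eps(y)\rho_2^s(y)dy = \int |T_s(y)-y|\rho_2^s(y)dy = Q(s)$, Jensen's inequality applied to a concave majorant of $z \mapsto z(1+\ln_{-}z)$ (for instance $z(1+\log(1+1/z))$, which differs from it by only a bounded multiplicative factor) pulls the modulus outside the integral and yields \eqref{esti-LLint}.

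The main obstacle is the middle step: in the critical case $p=3$ the integral $\int |x-y|^{-2}\rho(x)\,dx$ with $\rho\in L^3$ is only barely divergent, and recovering the logarithmic factor $(1+\ln_{-}\eps(y))$ rather than a plain constant is exactly what distinguishes an Osgood-type stability estimate from a useless Lipschitz one. This is the reason the splitting at the fixed scale $2$ used in the proof of \eqref{esti-ulli} must be redone here with the additional scale $\eps(y)$ threaded through.
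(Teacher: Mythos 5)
Your proposal is correct and follows essentially the same route as the paper: Fubini to reduce to a $y$-by-$y$ bound, a near/far split at the displacement scale $\eps(y)=|T_s(y)-y|$ using \eqref{B-prop} on the near region and \eqref{Lip-prop} on the far region, a secondary split at a fixed radius with H\"older to produce the logarithmic modulus, and Jensen to pull the modulus outside the $y$-integral. The only place you deviate is the Jensen step, and it is in fact a small improvement: the paper asserts that $z\mapsto z(1+\ln_- z)$ is concave on $[0,\infty)$, but this fails at $z=1$ where the one-sided derivatives are $0$ and $1$; your replacement by the genuinely concave and comparable majorant $z(1+\log(1+1/z))$ closes that gap without changing the conclusion.
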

\begin{proof}
By  \eqref{demen} and Fubini's principle, we obtain that 
for any $t\geq 0$, 
 \begin{align*}
&\int|u_1(t,x)-u_2(t,x)|\rho_2(t,x)dx \leq  \int \Big( \int |\mathcal{U}(x-T_t (y))-\mathcal{U}(x-y)| \rho_2(t,x)  dx \Big)  \rho_2^t(y) dy .
 \end{align*}
 Now we split the inner integral into three parts, corresponding to the domains 
 $B(T_t (y),\epsilon_y^t)$, $B(T_t (y),\epsilon_y^t)$ and $B(y,\epsilon_y^t)^c \cap B(T_t (y),\epsilon_y^t)^c$, where   $\epsilon_y^t =|T_t (y) - y|$.
 Observing that the Oseen tensor 
$\mathcal{U}$ satisfies that there exists $C>0$ such that for all $x $ in $\R^3 \setminus \{ 0\}$,
  $ \left|\mathcal{U}(x) \right|\leq C\frac{1}{|x|}  $, 
and using  \eqref{Lip-prop} we deduce from  \eqref{demen}   that for any $t\geq 0$,
 \begin{align*}
&\int|u_1(t,x)-u_2(t,x)|\rho_2(t,x)dx 
\leq C \int \int_{B(T_t (y),\epsilon_y^t)} \left(\frac{1}{|x-T_t (y) |}+\frac{1}{|x-y|} \right)\rho_2(t,x) dx \rho_2^t(y) dy\\
&\quad + C \int \int_{B(y,\epsilon_y^t)} \left(\frac{1}{|x-T_t (y)|}+\frac{1}{|x-y|} \right)\rho_2(t,x) dx \rho_2^t(y) dy\\
&\quad +C \int \epsilon_y^t \int_{B(y,\epsilon_y^t)^c \cap B(T_t (y),\epsilon_y^t)^c} \left(\frac{1}{|x-T_t (y)|^2}+\frac{1}{|x-y|^2} \right)\rho_2(t,x) dx \rho_2^t(y) dy\\
&=: I_1+I_2+I_3 .
 \end{align*}
 Regarding $I_1$ and $I_2$ we can proceed in the same way as in the proof of Proposition  \ref{prop_log_lip_u} and obtain 
 \begin{equation*}
 I_1 +I_2\leq C(\|\rho_2\|_{L^3 (\R^3)}) Q(t) .
 \end{equation*}
 For $I_3$ we distinguish two cases. First if $\epsilon_y^t=|y-T_t (y)|\geq 1 $ we get $$
\int_{B(y,\epsilon_y^t)^c \cap B(T_t (y),\epsilon_y^t)^c} \left(\frac{1}{|x-T_t (y)|^2}+\frac{1}{|x-y|^2} \right)\rho_2(t,x) dx \leq \|\rho_2^t\|_1.
$$
On the other hand, if $\epsilon_y^t=|y-T_t (y)|\leq 1 $, then 
\begin{align*}
&\int_{B(y,\epsilon_y^t)^c \cap B(T_t (y),\epsilon_y^t)^c} \left(\frac{1}{|x-T_t (y)|^2}+\frac{1}{|x-y|^2} \right)\rho_2(t,x) dx\\
&\leq \int_{B(T_t (y),\epsilon_y^t)^c } \frac{1}{|x-T_t (y)|^2}\rho_2(t,x)dx + \int_{B(y,\epsilon_y^t)^c } \frac{1}{|x-y|^2}\rho_2(t,x)dx\\
& \leq \int_{B(T_t (y),2)\setminus B(T_t (y),\epsilon_y^t) } \frac{1}{|x-T_t (y)|^2}\rho_2(t,x)dx + \int_{B(y,2)\setminus B(y,\epsilon_y^t) } \frac{1}{|x-y|^2}\rho_2(t,x)dx+ \|\rho_2\|_1\\
&\leq C(\|\rho_2\|_{L^1\cap L^3})(1+ |\log|T_t (y) -y||) .
\end{align*}
Gathering all the estimates we get 
$$
\int|u_1(t,x)-u_2(t,x)|\rho_2(t,x)dx  \leq C(\|\rho_2\|_{L^1\cap L^3}) \int |T_t (y) - y |(1+ \ln_{-}|T_t (y) - y |) \rho_2^t(y) dy.
$$

Using the fact that $z\mapsto z(1+\ln_{-} z)$ is concave for $z\geq 0$ we get
 \eqref{esti-LLint} and this concludes the proof of Lemma \ref{LLint}.
\end{proof}
We have for any $t\geq 0$, 
\begin{equation*}
Q(t) \leq  Q(0)+ C(\|\rho_i\|_{L^\infty(0,t, L^1\cap L^3)})\int_0^t Q(s)(1+\ln_{-} Q(s)) ds.
\end{equation*}
Thanks to the Osgood theorem, see Lemma \ref{Osgood} with $\eta(z)=z(1+\ln_{-}(z)) $, we deduce \eqref{sta-geq-critik}.
This concludes the proof of  Theorem \ref{thm}.

\section{Proof of Theorem \ref{thm0}}
\label{sec-proof-princi}

The proof of existence can be done by a density argument using the well known results for the existence of a solution in the case of regular data, see \cite{Hofer} and \cite{Mecherbet2}, however we propose below a self-contained proof based on a fixed-point argument.
 
 \subsection{Iterative scheme}
Let $p\geq 3$, $\rho_0 \in E_p$ and consider the sequence $(u^n,\rho^n)$ where 
$$ u^n(t) \in
\left\{
\begin{array}{lcl}
W^{2,p}(\R^3) & \text{ if } & p>3 ,\\
\dot{W}^{2,3}(\R^3)\cap \dot{W}^{1,3}(\R^3) \cap \underset{q\in(3,+\infty]}{\bigcap} W^{1,q}(\R^3) & \text{ if } & p=3 ,
\end{array}
\right.
$$
the unique solution to the Stokes equation
\begin{equation*}
\begin{array}{c}
-\Delta u^n+ \nabla p^n= - \rho^n e_3, \:\div(u^n)=0, \text{ on } \R^3 ,\\
\underset{|x|\to \infty}{\lim} |u^n| = 0 ,
\end{array}
\end{equation*}
and $\rho^{n+1}$ the unique solution to the transport equation
\begin{equation*}
\left\{
\begin{array}{rcl}
\partial_t \rho^{n+1}+\div(\rho^{n+1} u^n)&=& 0, \text{ on } \R^3 ,\\
\rho^{n+1}(0,\cdot)&=& \rho_0,
\end{array}\right.
\end{equation*}
with $\rho^0=\rho_0$. Since $u^n$ is divergence free and the associated flow $Xn$ is well defined, classical considerations for the transport equation ensure that $\rho^n \in  C([0,+\infty), L^1(\R^3)\cap L^p(\R^3)) $ for each $n$ and we have for any $t$,
\begin{equation}\label{borne_rho_n}
\|\rho^{n+1}(t)\|_{L^1(\R^3)}=\|\rho_0\|_{L^1(\R^3)}, \: \|\rho^{n+1}(t)\|_{L^p(\R^3)}=\|\rho_0\|_{L^p(\R^3)},
\end{equation}
Hence, we get the following uniform bounds of the velocity:
\begin{equation}\label{borne_u_n}
\|u^n(t)\|_{W^{2,p}(\R^3)}\leq C \|\rho^n(t)\|_{L^1(\R^3)\cap L^p(\R^3)} \leq C \| \rho_0\|_{L^1(\R^3)\cap L^p(\R^3)}
\end{equation}
in the case where  $p>3$
and 
\begin{equation}\label{borne_u_n_bis}
\LLip(u^n(t))+ \|\nabla u^n(t)\|_{W^{1,3}(\R^3)}+ \|  u^n(t)\|_{W^{1,q}(\R^3)}\leq C \|\rho^n(t)\|_{L^1(\R^3)\cap L^3(\R^3)} \leq C \| \rho_0\|_{L^1(\R^3)\cap L^3(\R^3)} ,
\end{equation}
in the case $p=3$ with $q\in(3,+\infty]$.
Moreover for $p\geq3$, we have, for all $0 \leq t \leq t' \leq T$,
\begin{align}\label{continuite_rho_n}
W_1(\rho^{n+1}(t),\rho^{n+1}(t')) &\leq \int_{\R^3}|X^n(t',0,x)-X^n(t,0,x)| \rho_0(dx) \notag\\
&\leq (t'-t) \|u^n\|_{L^\infty(0,T;L^\infty(\R^3))} \|\rho_0\|_{L^1(\R^3)} .
\end{align}
This shows that 
$$\rho^{n} \in  C([0,T],\mathcal{P}(\R^3)\cap L^1(\R^3)\cap L^p(\R^3))  \text{ uniformly in n for any }T>0.$$
Using the same arguments as in \eqref{demen}, We emphasize the following estimates for any compact set $K$ of $\R^3$
\begin{eqnarray}
    \|u_n(t)-u_n(t')\|_{W^{1,\infty}(K)}\leq C_K W_1(\rho^n(t),\rho^n(t')),& \text{ if } p>3,\label{equiconinuity_u_n(t),p>3}\\
        \|u_n(t)-u_n(t')\|_{L^{\infty}(K)}\leq C_K W_1(\rho^n(t),\rho^n(t')),& \text{ if } p=3 \label{equiconinuity_u_n(t),p=3} .
\end{eqnarray}

\subsection{Convergence of \texorpdfstring{$(\rho^n)_{n}$}{(rhon)n}}
 We introduce the following space for $M>0$
\begin{equation*}
B_{T,M}=\{\rho \in  C([0,T], \mathcal{P}(\R^3)), \quad
\underset{[0,T]}{\sup}W_1(\rho(t),\rho_0)  \leq M \} .
\end{equation*}
which is complete for the metric $d(\rho_1,\rho_2):=\underset{[0,T]}{\sup}W_1(\rho_1(t),\rho_2(t)) $.

Since $\rho^{n+1}(t):=X_n(t,0,\cdot)\#\rho_0 $ we have
\begin{align*}
W_1(\rho^n(t),\rho_0) \leq \int_{\R}|X^n(t,0,x)-x|\rho_0(dy) \leq T \|\rho_0\|_{L^1} \|u^n \|_{L^\infty(0,T;L^\infty)} ,
\end{align*}
which shows that $(\rho_n)_n$ lies in $B_{T,M}$ for an adequate choice of $M>0$. We aim to show that $(\rho_n)$ is a Cauchy sequence in $B_{T,M}$ for $T>0$ small enough. We present below the argument by distinguishing the case $p=3$ from the case $p>3$.

  \subsection*{Case where $p>3$.}
 For $p>3$, the stability estimates \eqref{ineq-Q} ensure that 
$$
W_1(\rho^n(t),\rho^{n+1}(t))\leq \left(\int_0^t \int_{\R^3}|u^{n-1}(s,x)-u^{n}(s,x)|\rho^{n}(s,x)dx ds \right)e^{M_n t} ,
$$
with $M_n=\left(\underset{0 \leq s \leq t }{\sup}\Lip(u^n(s))\right) $. On the other hand, performing an estimate as in \eqref{triple} yields
$$
\int_{\R^3}|u^{n-1}(s,x)-u^{n}(s,x)|\rho^{n}(s,x)dx\leq C(\|\rho^n\|_{L^1(\R^3)\cap L^p(\R^3)})W_1(\rho^{n-1}(s),\rho^n(s)) .
$$
Hence for all $t\in[0,T]$ we get using \eqref{borne_rho_n} and $M_n=\underset{0 \leq s \leq T }{\sup}\Lip(u^n(s))\leq M$ thanks to \eqref{borne_u_n}
\begin{equation*} 
\underset{t\in[0,T]}{\sup} W_1(\rho^n,\rho^{n+1})\leq C Te^{MT} \underset{t\in[0,T]}{\sup} W_1(\rho^n,\rho^{n-1}) .
\end{equation*}
This shows that for $T>0$ small enough, $\rho_n$ is a Cauchy sequence.

\subsection*{Case where $p=3$.}
For $p=3$ we use estimates \eqref{ineq-Q_p=3} to get for all $n,k\in \mathbb{N}$
\begin{multline}\label{eq:stab1_n,k}
W_1(\rho^{n+k+1}(t),\rho^{k+1}(t)) \leq C^{n,k} \int_0^t \eta \left(W_1(\rho^{n+k+1}(s),\rho^{k+1}(s)) \right)  ds\\
+ \int_0^t \int_{\R^3} |u^{n+k}(s,x)-u^{k}(s,x)|\rho^{n+k}(s,x) dx ds ,
\end{multline}
with $\eta(z)=z(1+\ln_{-}(z)) $ for $z\geq 0$ and $$C^{n,k}=\underset{s\in[0,T]}{\sup}\LLip(u^{n+k}(s)) \leq C \underset{s\in[0,T]}{\sup}\| \rho^{n+k}(s)\|_{L^1(\R^3)\cap L^3(\R^3)} \leq C ,$$ thanks to \eqref{esti-uli} and the uniform bounds on $(\rho^n)_{n}$.

On the one hand, using the uniform bounds of $(u_n)_n$ in $ C([0,T],L^\infty)$  and $(\rho_n)_n$ in $ C([0,T],L^{1})$ and the fact that $\eta(z)\leq \max(z,1) $ we have from $\eqref{eq:stab1_n,k} $
$$
\underset{0 \leq s \leq t}{\sup} W_1(\rho^{n+k+1}(t),\rho^{k+1}(t)) \leq C t \left(1+ \underset{0 \leq s \leq t}{\sup}W_1(\rho^{n+k+1}(t),\rho^{k+1}(t)) \right) + t C ,
$$
 which shows that, for $t$ small enough, the sequence $\left(\underset{0 \leq s \leq t}{\sup}W_1(\rho^{n+k}(t),\rho^{k}(t))\right)_{n,k}$ is uniformly bounded.

On the other hand, thanks to Lemma \ref{LLint} we have
\begin{equation}\label{eq:stab2_n,k}
\int_0^t \int_{\R^3} |u^{n+k}(s,x)-u^{k}(s,x)|\rho^{n+k}(s,x) dx ds\leq C \int_0^t  \eta \left(W_1(\rho^{n+k}(s),\rho^{k}(s)) \right)ds,
\end{equation}
where we used again \eqref{borne_rho_n}, \eqref{borne_u_n} to get a constant independent of $n,k$. Hence gathering inequalities \eqref{eq:stab1_n,k} and \eqref{eq:stab2_n,k} and setting $$f_{k}(t):=\underset{n}{\sup} \underset{0 \leq s \leq t}{\sup} W_1(\rho^{n+k}(s),\rho^{k}(s)) , $$ 
we get for all $T\geq 0$
$$
f_{k+1}(T) \leq C \int_0^T \eta(f_{k+1}(s))ds+ C\int_0^T \eta(f_k(s)) ds,
$$
where we used the fact that $z \mapsto \eta(z)$ is a non-decreasing function. By setting $\tilde{f}(t):=\underset{k\to \infty}{\limsup }f_k(t) $ we have 
$$
\tilde{f}(T) \leq 2 C \int_0^T \eta(\tilde{f}(s)) ds.
$$
We conclude thanks to Lemma \ref{Osgood} that $\tilde{f}=0 $ a.e on $[0,T]$, which shows that $ \rho^n$ is a Cauchy sequence.

\subsection*{Conclusion.}
Hence for $p\geq 3$, there exists a limit measure $\rho \in B_{T,M}\subset   C([0,T],\mathcal{P}(\R^3))$ such that $$\underset{t\in[0,T]}{\sup} W_1(\rho^n,\rho)\to 0.  $$

\subsection{Convergence of \texorpdfstring{$(u^n)_n$}{(un)n} and \texorpdfstring{$(X^n)_n$}{(Xn)n}} The second step is to extract a converging subsequence of $(u^n)_n$ by distinguishing again the case where $p>3$ from the case where $p=3$.

\subsection*{Case where $p>3$.}

Since for each compact set $K$ of $\R^3$ the sequence $ t \mapsto u_n(t)$ is equicontinuous in $W^{1,1}(K) $ thanks to \eqref{equiconinuity_u_n(t),p>3}, \eqref{continuite_rho_n} and $x\mapsto u_n(t,x)$ relatively compact in $ C^1(K) $  (using Ascoli with the equicontinuity of $x\mapsto u_n(t,x)$ in $ C^1(K)$), we get using again Ascoli the existence of $u\in C([0,T],  C^1(\R^3))$ such that $u_n \to u $ in $C([0,T],  C^1(K))$ up to a subsequence for each compact $K$.

Regarding the flow $X^n$, one can show that for each $0 \leq s\leq T $, $X^n(\cdot,s,\cdot) $ is uniformly bounded in $ C([0,T], C^{1}(K))$ on each compact set $K\in \R^3$, equicontinuous in $ C([0,T], C^{1}(\R^3))$ and that for each $t\in[0,T]$, $X^n(t,s,\cdot)$ is relatively compact in $ C^{1}(K)$. Indeed we have the following bounds for each $x,y \in \R^3 $, $t,t'\in[0,T]$, $|t-t'|<1$.
  \begin{gather}\label{borne_uniforme_x_t}
    |X_n(t,s,x)| \leq |x|+ |t-s| \|u^n\|_{ C(0,T; C(\R^3))},\: |\nabla X_n(t,s,x)|\leq e^{L(t-s)} ,
   \\ \label{equicontinuite_x}
|\nabla X^n(t,s,x)-\nabla X^n(t,s,y)| \leq |x-y|^\mu e^{L |t-s|} ,
\\ \label{equicontinuite_t}
    \|X^n(t,s,\cdot)-X^n(t',s,\cdot)\|_{ C^{1}(\R^3)} \leq Me^{LT}|t-t'|^\mu .
    \end{gather}
    From \eqref{equicontinuite_t} we get that $t\mapsto X^n(t,s,\cdot) $ is equicontinuous with values in $ C^{1,\mu}$,\eqref{borne_uniforme_x_t}  and  \eqref{equicontinuite_x} ensure that for each $t$, $X^n(t,s,\cdot) $  is relatively compact in $ C^1(K)$. Hence, since $X^n$ is uniformly bounded in $  C([0,T], C^{1,\mu}(K))$ thanks to \eqref{borne_uniforme_x_t}, then using the Ascoli theorem, there exists $X(\cdot,s,\cdot)\in  C([0,T], C^{1,\mu}(\R^3)) $ such that $X^n(\cdot,s,\cdot) \to X(\cdot,s,\cdot) $ in $  C([0,T], C^1(K)) $ for each compact $K$. In particular passing in the limit we have for all $s,t\in[0,T]$, 
    $X(t,s,\cdot) \circ X(s,t,\cdot)= Id$, 
    and using the convergence of $u^n$ to $u$ in $ C([0,T], C^1(K))$ for each compact $K$ we get that $X$ satisfies \eqref{cflow-def}.
    
    \subsection*{Case where $p=3$.}
  We proceed analogously using \eqref{equiconinuity_u_n(t),p=3} and show that there exists $u\in C([0,T], C(\R^3)) $ such that for each compact set $K$ we have $u^n\to u$ in $ C([0,T]; C(K))$.

Regarding the flow $X^n$,  
$X^n(\cdot,s,\cdot)-Id$ is uniformly bounded in $ C([0,T], C(\R^3))$ and in particular in $ C([0,T], C(K))$ for each compact set. Now, let $x,y \in \R^3$, $|x-y|\leq 1/2$ and $T^n$ the maximal time such that $|X^n(t,s,x)-X^n(t,s,y)|<3/4 $ 
for $t,s \in[0,T]$, we have 
\begin{align*}
|X^n(t,s,x)-X^n(t,s,y)|&\leq |x-y|+\left(\underset{\tau\in[0,T]}{\sup}\LLip(u^n(\tau)) \right) \\
&\times \int_s^t |X^n(t,s,x)-X^n(t,s,y)|(1+\ln_{-} |X^n(t,s,x)-X^n(t,s,y)|),\end{align*} 
where $\left(\underset{\tau\in[0,T]}{\sup}\LLip(u^n(\tau)) \right) $ is uniformly bounded thanks to \eqref{borne_u_n_bis}. Using Lemma \ref{Osgood} with $\eta(z):=z(1+\ln_{-}(z))$, we get for some constant $\bar C$ for any $t\leq T^n$, $|x-y|\leq1/2$
\begin{equation}\label{equicontinuite_x_3}
|X^n(t,s,x)-X^n(t,s,y)| \leq \bar{C}|x-y|^{e^{-CT}},
\end{equation}
which shows that the maximal time $T^n$ can be taken independent of $n$ and $T^n=T$ for $T$ small enough. hence, $X_n(t,s,\cdot)$ is relatively compact in $ C(K)$ for each $t,s \in [0,T]$. Moreover
$$
\|X^n(t,s,\cdot)-X^n(t',s,\cdot)\|_{ C(K)}\leq |t-t'| \| u^n\|_{ C(0,T;L^\infty(\R^3))},
$$
which ensures that $t\mapsto X^n(t,s,\cdot) $ is equicontinuous for each $s\in[0,T]$. Using again the Ascoli theorem this allows us to extract a converging subsequence in $ C([0,T], C(K))$. This allows us to construct $X\in  C([0,T], C(\R^3))$ and by passing in the limit in \eqref{equicontinuite_x_3} pointwisely we get for all $x,y \in \R^3$, for all  $t,s$ in $[0,T]$, 
$$
|X(t,s,x)-X(t,s,y)| \leq \bar{C}|x-y|^{e^{-CT}},
$$
and analogously to the case $p>3$ we have 
for $s,t\in[0,T]$, 
    $
    X(t,s,\cdot) \circ X(s,t,\cdot)= Id.
    $

\subsection{Existence and uniqueness of the solution}
Convergence of $\rho^n$ and $u^n$ (up to a subsequence) allows to pass in the limit in both the Stokes and transport equations and show that $(u,\rho)$ satisfies the transport-Stokes system weakly. Moreover, using the fact that $X^n $ satisfies
$$
\rho^{n+1}(t,\cdot)= X^n(t,0,\cdot)\#\rho_0,
$$
one can pass in the limit in both sides thanks to the strong convergence of $X^n$ up to a subsequence and the convergence of $\rho^n$  using the Wasserstein metric. Indeed, we get for all  $\psi\in  C_b(\R^3)$ 
$$
\int \psi(x) \rho(t,x) dx =\int \psi(X(0,t,x)) \rho_0(x) dx,
$$
which means
$$
\rho(t,\cdot)= X(t,0,\cdot)\# \rho_0,
$$
and thanks to the fact that  $X\in C([0,T], C(\R^3))$  we recover that $\rho \in  C([0,T], L^p(\R^3) \cap L^1(\R^3)) $. 
 From this it follows that $u \in  C([0,T], W^{2,p}(\R^3)) $ with the bounds \eqref{reg_u,p} (resp. $\nabla u \in  C([0,T], W^{1,3}(\R^3))$, $u\in  C([0,T],W^{1,q}(\R^3)) $ for $q\in]3,+\infty]$ with the bounds \eqref{reg_u,p_bis} ). 

Global in time existence is ensured thanks to the uniform bounds of $\rho(t)$ in $L^p(\R^3) \cap L^1(\R^3)$ while uniqueness is ensured thanks to Theorem \ref{thm}. This concludes the proof of  Theorem \ref{thm0}.

\section{Proof of Proposition \ref{apana}}\label{sec-apana}

This section is devoted to the proof of Proposition \ref{apana}.
In the course of the proof we shall use a technical lemma, see Lemma \ref{lemme_Miot_general}, which is proved in the next section. 

\subsection{A first few basic estimates}
We gather below some useful estimates. 
First, by \eqref{cflow-def}, for all $x$ and $y$ in $\R^3$, 
\begin{equation}
    \label{coince}
  |x-y|e^{-\int_0^t \Lip(u(s,\cdot))ds}\leq |X(t,x)-X(t,y)| \leq |x-y| e^{\int_0^t \Lip(u(s,\cdot))ds}.
\end{equation}
Thus, for $T>0$ small enough, 
this implies that there exists 
 $\lambda \in (1,3/2]$ such that
 \begin{equation}\label{aveclambda}
\frac{1}{\lambda} \leq  \frac{|X(t,x)-X(t,y)}{|x-y|} \leq \lambda,\: \forall \, x \neq y,\: t\in[0,T] .
 \end{equation}
Next, by splitting the integral into two parts, distinguishing the cases where $|x-z|\leq 1$ and where $|x-z|> 1$, and using H\"older's inequality for the first case with the observation that $p>3$, we obtain the existence of 
 $L=L(p)>0$ such that for all $0\leq s\leq 2$, 
\begin{equation}\label{fors}
\underset{x \in \R^3}{\sup} \int \frac{\rho_0(dz)}{|x-z|^s} \leq L \|\rho_0\|_{L^1\cap L^p} .
\end{equation}
Finally, from the definition of the the Oseen tensor in 
\eqref{Oseen} we deduce the following property.
\begin{lemme}\label{lemme_oseen_rec}
There exists 
$K>0$ such that for all $x$  in $\R^3\setminus \{0\}$ and all multi-index $\alpha \in \mathbb{N}^3$
\begin{equation}
    \label{alpha-U}
|\partial^\alpha \mathcal{U}(x)| \leq \frac{K^{|\alpha|} |\alpha|!}{|x|^{1+|\alpha|}} .
\end{equation}
\end{lemme}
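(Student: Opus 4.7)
The plan is to combine the $-1$-homogeneity of $\mathcal{U}$ with Cauchy's estimates for holomorphic functions of several complex variables.

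First, I would exploit homogeneity. The Oseen tensor satisfies $\mathcal{U}(\lambda x)=\lambda^{-1}\mathcal{U}(x)$ for every $\lambda>0$, so differentiating this identity $|\alpha|$ times in $x$ shows that $\partial^\alpha \mathcal{U}$ is positively homogeneous of degree $-1-|\alpha|$, i.e. $\partial^\alpha \mathcal{U}(x)=|x|^{-1-|\alpha|}\partial^\alpha \mathcal{U}(x/|x|)$. The estimate \eqref{alpha-U} then reduces to exhibiting a universal constant $K$ with $|\partial^\alpha \mathcal{U}(\xi)|\leq K^{|\alpha|}|\alpha|!$ for every unit vector $\xi\in S^2$.

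Next, I would construct a uniform complex neighborhood on which $\mathcal{U}$ extends holomorphically. Setting $\phi(z):=z_1^2+z_2^2+z_3^2$, the entries of the Oseen tensor take the rational form $\mathcal{U}_{ij}(z)=\frac{1}{8\pi}\frac{\delta_{ij}\phi(z)+z_i z_j}{\phi(z)^{3/2}}$, which is holomorphic wherever $\phi(z)$ avoids the branch cut of the chosen square root. For $\xi\in S^2$ and $w\in\C^3$ in the polydisc $\{|w_j|\leq r_0\}$, a direct computation gives $|\phi(\xi+w)-1|=|2\xi\cdot w+w\cdot w|\leq 2\sqrt{3}\,r_0+3r_0^2$, which can be made smaller than $1/2$ by choosing $r_0$ a small enough universal constant. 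Hence $\mathcal{U}$ extends holomorphically to such a polydisc around every $\xi\in S^2$, with a uniform bound $\sup|\mathcal{U}|\leq M$ for some universal constant $M$ (rotation invariance of $\mathcal{U}$ together with compactness of $S^2$ guarantees that $r_0$ and $M$ can be chosen independently of $\xi$).

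Finally, I would invoke the multivariate Cauchy inequality to obtain $|\partial^\alpha \mathcal{U}(\xi)|\leq \frac{\alpha!\,M}{r_0^{|\alpha|}}\leq \frac{M\,|\alpha|!}{r_0^{|\alpha|}}$, where the last step uses $\alpha!\leq|\alpha|!$ via the multinomial identity. Choosing $K$ large enough to absorb $M$ (for instance $K:=\max(M,1)/r_0$) yields the reduced bound, and combining with the homogeneity of the first paragraph gives \eqref{alpha-U}. The only real obstacle is the verification in the second paragraph that the complexified norm $\sqrt{\phi(z)}$ remains single-valued and bounded away from zero in a polydisc whose radius can be chosen uniformly in $\xi\in S^2$; this is an elementary computation once the explicit formula above is written down, and everything else reduces to standard Cauchy estimates.
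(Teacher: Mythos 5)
Your argument is correct, and it takes a genuinely different route from the paper. The paper proceeds by direct computation: it proves by induction that $\partial^\alpha(1/|x|)$ is a finite sum of the form $\sum_k P_k^\alpha(x)/|x|^{1+2k}$ with $P_k^\alpha$ homogeneous polynomials, and it tracks two bookkeeping quantities — the maximal number of monomials appearing (which at worst triples at each differentiation) and the largest coefficient (which grows by a factor $2n+1$ at step $n$) — to obtain an explicit bound of the form $3^n 2^{3n+2} n!/|x|^{1+n}$; it then repeats the computation for $1/|x|^3$ and assembles the full Oseen tensor via Leibniz. Your approach instead exploits the $(-1)$-homogeneity of $\mathcal{U}$ to reduce the estimate to the unit sphere, then produces a holomorphic extension of $\mathcal{U}$ to a complex polydisc of uniform radius $r_0$ around each $\xi\in S^2$ by controlling $\phi(\xi+w)=1+2\xi\cdot w+w\cdot w$ away from the branch cut, and finally applies the multivariate Cauchy inequality to get $|\partial^\alpha\mathcal{U}(\xi)|\leq \alpha!\,M/r_0^{|\alpha|}\leq |\alpha|!\,M/r_0^{|\alpha|}$. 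Both proofs are valid; yours is shorter and more conceptual (it is the standard route to factorial derivative bounds for real-analytic functions), while the paper's is elementary and self-contained, avoiding complex analysis. One small remark on your write-up: the appeal to rotation invariance and compactness of $S^2$ at the end of the second paragraph is superfluous, since the explicit estimate $|\phi(\xi+w)-1|\leq 2\sqrt{3}\,r_0+3r_0^2$ already shows that $r_0$ and $M$ are uniform in $\xi$; compactness would be needed only if you did not have such an explicit bound.
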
 
\begin{proof}
Let $n\geq 1$ and $\alpha \in \mathbb{N}^3$ a multi-index such that $|\alpha|=n $. We consider first the term $u(x):=\frac{1}{|x|}$ in the Oseen tensor. Since \begin{equation} \label{marre}
    \partial_{x_i} \frac{1}{|x|^p}=-p \frac{x_i}{|x|^{p+2}} ,
\end{equation} 
one can show by induction that for all $|\alpha|=n$ there exists polynomials $P_k^\alpha$, $k=\lceil\frac{n}{2}\rceil,\cdots, n$ such that
\begin{equation}\label{eq:reccurence}
\partial^\alpha u(x)=\underset{k=\lceil \frac{n}{2}\rceil }{\overset{n}{\sum}} \frac{P_k^\alpha(x)}{|x|^{1+2k}},
\end{equation}
where $P_k^\alpha$ is a a sum of monomials in $x_1,x_2,x_3$ such that each monomial is of degree 
$  -n+2k \geq 0 $ and $\lceil \frac{n}{2}\rceil$ denotes the ceiling function of $\frac{n}{2}$. 

Indeed the induction formula is satisfied for $n=1$ as a consequence 
of  \eqref{marre} with $p=1$. 
Now we assume that \eqref{eq:reccurence} holds true up to $n$, and we observe that,  for any $1 \leq i \leq 3$ and for any $\alpha  \in \mathbb{N}^3$ with $|\alpha|=n$,  by the induction assumption \eqref{eq:reccurence}, Leibniz' rule and  \eqref{marre}, 
\begin{align}\label{eq:reccurence_derivation}
    \partial_i \partial^\alpha u(x)&= - \underset{k=\lceil \frac{n}{2}\rceil}{\overset{n}{\sum}} (1+2k)\frac{x_i P_k^\alpha(x)}{|x|^{3+2k}}+\underset{k=\lceil\frac{n}{2}\rceil}{\overset{n}{\sum}} \frac{\partial_i P_k^\alpha(x)}{|x|^{1+2k}} \notag \\
    &= \frac{\partial_i P^\alpha_{\lceil\frac{n}{2}\rceil}(x)}{|x|^{1+2\lceil\frac{n}{2}\rceil}}
     + \underset{k=\lceil\frac{n}{2}\rceil+1}{\overset{n}{\sum}} \frac{\partial_i P^\alpha_k(x)-(2k-1)x_i P^\alpha_{k-1}(x)}{|x|^{1+2k}}
     - (1+2n)\frac{x_i P_n^\alpha(x)}{|x|^{3+2n}} .
\end{align}
Since $\lceil \frac{n}{2} \rceil+1= \lceil \frac{n+1}{2} \rceil$ for $n\geq0$, all the terms in \eqref{eq:reccurence_derivation} except the first one correspond to a term of the form $\frac{P_k^\beta}{|x|^{1+2k}} $ with $\beta=e_i+\alpha$ and $k\in \{\lceil \frac{n+1}{2} \rceil,\dots,n+1 \}$. For the first term we distinguish two cases 
\begin{itemize}
    \item If $n=2p$, the polynomial $P_{\lceil \frac{n}{2} \rceil}^\alpha$ is of degree $-n+ 2\lceil \frac{n}{2} \rceil=-n+2p=0  $ hence $\partial_i P_{\lceil \frac{n}{2} \rceil}^\alpha=0 $.
    \item If $n=2p+1$, then $\lceil \frac{n}{2} \rceil=p+1=\lceil \frac{2p+2}{2} \rceil=\lceil \frac{n+1}{2} \rceil$ and hence the polynomial $\partial_i P_{\lceil \frac{n}{2} \rceil}^\alpha$ corresponds to a term of the form $P_k^\beta$ with $k=\lceil \frac{n+1}{2} \rceil$.
\end{itemize}
Now if we set $D_n$ the { maximal number of monomials appearing in} \eqref{eq:reccurence} for all $|\alpha|=n$ we have using \eqref{eq:reccurence_derivation}
$$
D_{n+1} \leq D_n + 2 D_n = 3 D_n \leq 3^{n} D_1 .
$$
If we set $C_n$ the largest coefficient (in absolute value) appearing in \eqref{eq:reccurence} for all  $\alpha  \in \mathbb{N}^3$ with $|\alpha|=n$, we have, by using \eqref{eq:reccurence_derivation}, that 
$$
C_{n+1} \leq (1+2n) C_{n} \leq \underset{k=1}{\overset{n}{\prod}} (2k+1) C_1 .
$$
We observe that $C_1=1$, that $D_1=1$ and that 
$$
|\partial^\alpha \frac{1}{|x|} |\leq \frac{D_n C_n}{|x|^{1+n}}  \leq \frac{3^{n} 2^{3n+2}(n!)} {|x|^{1+n}},
$$
with $n=|\alpha|$. Above we used that $$(2n+1)!=\left(\underset{k=1}{\overset{n}{\prod}} (2k+1)\right)\left(\underset{k=1}{\overset{n}{\prod}} (2k)\right)=2^n n! \left(\underset{k=1}{\overset{n}{\prod}} (2k+1)\right),
$$
and 
$$
(2n+1)! = (2n+1) (2n)! \leq n(2n+1) 2^{n+1} (n!)^2 \leq 2^{3n+1} 2^{n+1} (n!)^2 .
$$
For the second term in the Oseen tensor $\frac{x_ix_j}{|x|^3} $ one can show analogously that the derivatives of $u(x)=\frac{1}{|x|^3} $ are of the form
$$
\partial^\alpha u(x)= \underset{k=\lceil \frac{n}{2}\rceil}{\overset{n}{\sum}}\frac{P_k^\alpha(x)}{|x|^{2k+3}},
$$
where   in $P_k^\alpha$ is a sum of monomials  of degree $ -n+2k  $.
Using the same arguments as above,
we get that for  $\alpha  \in \mathbb{N}^3$ 
with $n=|\alpha|$
$$
\left|\partial^\alpha \frac{1}{|x|^3}\right|\leq \frac{3^{n} 2^{3n+5} }{|x|^{3+n}} (n+1)! \leq \frac{3^{11n}}{|x|^{3+n}} n! .
$$
we conclude by using Leibniz formula 
\begin{align*}
\left|\partial^\alpha \frac{x_i x_j}{|x|^3} \right|&= \underset{\beta+\gamma=\alpha}{\sum} \begin{pmatrix} \beta \\ \alpha \end{pmatrix} \partial^\beta \left( \frac{1}{|x|^3}\right) \partial^{\gamma} (x_ix_j) \\
&\leq \underset{|\alpha|-2\leq |\beta| \leq |\alpha|}{\sum} \begin{pmatrix} \beta \\ \alpha \end{pmatrix}  \frac{3^{11|\beta|}}{|x|^{3+|\beta|}}|\beta|! 2|x|^{2-|\alpha|+|\beta|}\\
&\leq \frac{3^{13|\alpha|}}{|x|^{1+|\alpha|}}|\alpha|!.
\end{align*}
where we used $ \underset{0\leq |\beta| \leq |\alpha|}{\sum} \begin{pmatrix} \beta \\ \alpha \end{pmatrix}=2^{|\alpha|} $.
This concludes the proof of Lemma \ref{lemme_oseen_rec}.
\end{proof}

\subsection{The case where \texorpdfstring{$n=1$}{n=1}}
From \eqref{cflow-def}, we deduce that 
\begin{align*}
\|\partial_t X(t,\cdot)\|_\infty \leq \|u\|_\infty \leq C \|\rho_0\| \quad \text{ and }
\quad 
\|\nabla \partial_t X(t,\cdot)\|_\infty \leq \|\nabla u\|_\infty \|\nabla X \|_\infty \leq C \|\rho_0\| \lambda ,
\\ \quad \text{ and }
\quad 
|\nabla \partial_t X(t,x)-\nabla \partial_t X(t,y)| \leq [\nabla u]_{0,\mu}|X(t,x)-X(t,y)|^\mu \|\nabla X\|_\infty + 
|\nabla X(t,x)-\nabla X(t,y)| \|\nabla u\|_\infty .
\end{align*}
Therefore by the 
 Gronwall lemma, we arrive at 
$$
[\nabla \partial_t X]_{0,\mu} \leq C\|u\|_{2,p}\lambda^\mu \|\nabla X\|_\infty e^{\|\nabla u\|_\infty t}.
$$
This entails that the induction assumption is satisfied 
for  $n=1$, with some $C_0=C_0(\|\rho_0\|, \lambda, T,\mu)$ such that 
\begin{equation}
\|\partial_t X(t,\cdot)\|_{1,\mu} \leq \frac{C_0}{2}.
\end{equation}

\subsection{Beginning of the iteration}
Assume that \eqref{rec_formule} is satisfied up to $n \in \N^*$, We need to prove by induction the bound \eqref{rec_formule} for the $L^\infty$ estimate $\|\partial_t^{n+1} X(t,\cdot)\|_\infty$, the Lipschitz estimate $\|\nabla \partial_t^{n+1}  X(t,\cdot)\|_\infty$ and the H\"older estimate $[\nabla \partial_t^{n+1} X(t,\cdot)]_{0,\mu}$. Hence we split the proof into three parts. 

From \eqref{cflow-def} and \eqref{formu-conv}, we deduce the key formula 
\begin{equation}
    \label{mainformula}
\partial_t X(t,x)=\int \mathcal{U}_3(X(t,x)-X(t,y)) \rho_0(y) dy,
\end{equation}
where $$\mathcal{U}_3(x):=- \mathcal{U}(x)e_3 .$$

\subsection{\texorpdfstring{$L^\infty$}{L infty} Estimates}

By \eqref{mainformula} and 
 the Multivariate Fa\`a di Bruno formula \eqref{FDB_formula}, 
\begin{align*}
\partial_t^{n+1} X(t,x)&= \partial_t^n \partial_t X(t,x)\\
&=n! \underset{1\leq |\alpha|\leq n}{\sum} \int (\partial^\alpha \mathcal{U}_3(X(t,x)-X(t,z))\underset{s=1}{\overset{n}{\sum}}\underset{P_s(n,\alpha)}{\sum}\underset{j=1}{\overset{s}{\prod}}\frac{\partial_t^{l_j} (X(t,x)-X(t,z))^{k_j}}{(k_j!) (l_j!)^{|k_j|}}\rho_0(dz),
\end{align*}
Since the integers $l_j$ in the formula above satisfy $l_j\leq n$, we get by using the (Lipschitz part of the) induction hypothesis \eqref{rec_formule} and   \eqref{alpha-U},
\begin{align*}
\left|\partial_t^{n+1} X(t,x)\right|
&\leq n! \underset{1\leq |\alpha|\leq n}{\sum} \int_{\R^3} \frac{K^{|\alpha|}|\alpha|!}{|X(t,x)-X(t,z)|^{1+|\alpha|}}\\
&\quad \times \underset{s=1}{\overset{n}{\sum}}\underset{P_s(n,\alpha)}{\sum}\underset{j=1}{\overset{s}{\prod}}\frac{\left| (-1)^{l_j-1}(l_j)!C_0^{l_j}C_1^{l_j-1} \begin{pmatrix}1/2\\ l_j \end{pmatrix}\right|^{|k_j|} |x-z|^{|k_j|}}{(k_j!) (l_j!)^{|k_j|}}\rho_0(dz) ,\end{align*}
and therefore 
\begin{align*}\left|\partial_t^{n+1} X(t,x)\right|
&\leq n!(-1)^n C_0^n C_1^n \underset{1\leq |\alpha|\leq n}{\sum} K^{|\alpha|}|\alpha|!(-1)^{|\alpha|}C_1^{-|\alpha|}\\
&\quad  \times \left(\int_{\R^3}
 \frac{|x-z|^{|\alpha|}}{|X(t,x)-X(t,z)|^{1+|\alpha|}}\rho_0(dz)\right) \underset{s=1}{\overset{n}{\sum}}\underset{P_s(n,\alpha)}{\sum}\underset{j=1}{\overset{s}{\prod}} \frac{\begin{pmatrix}1/2\\ l_j \end{pmatrix}^{|k_j|}}{k_j!},\end{align*}
 by using the definition of $P_s(n,\alpha)$.
 Then, by \eqref{aveclambda} and  \eqref{fors}, we deduce that 
\begin{align*}
\left|\partial_t^{n+1} X(t,x)\right|
&\leq n!(-1)^n C_0^nC_1^n\lambda L\|\rho_0\|\underset{1\leq |\alpha|\leq n}{\sum}|\alpha|!(-1)^{|\alpha|} (K C_1^{-1} \lambda)^{|\alpha|}\underset{s=1}{\overset{n}{\sum}}\underset{P_s(n,\alpha)}{\sum}\underset{j=1}{\overset{s}{\prod}} \frac{\begin{pmatrix}1/2\\ l_j \end{pmatrix}^{|k_j|}}{k_j!} .
\end{align*}
Hence, considering $C_0>0$ and $C_1>0$ such that 
 $K C_1^{-1} \lambda<1 $ and $L\|\rho_0\|\lambda<C_0/2 $, and
 using that, according to  \cite[Lemma 3.3]{CVW}, there holds:
\begin{equation}\label{formule_magique1}
\underset{1\leq |\alpha|\leq n}{\sum}|\alpha|!(-1)^{|\alpha|} \underset{s=1}{\overset{n}{\sum}}\underset{P_s(n,\alpha)}{\sum}\underset{j=1}{\overset{s}{\prod}} \frac{\begin{pmatrix}1/2\\ l_j \end{pmatrix}^{|k_j|}}{k_j!}=2(n+1)\begin{pmatrix}1/2\\ n+1\end{pmatrix},
\end{equation}
 we get that 
$$
\left|\partial_t^{n+1} X(t,x)\right|\leq  (n+1)!(-1)^n C_0^{n+1} C_1^n  \begin{pmatrix}1/2\\ n+1\end{pmatrix} .
$$ 


\subsection{Lipschitz estimates}

First, from \eqref{mainformula} we deduce, by derivation in space, that 
\begin{align} \label{lip-e}
\partial_t \nabla X(t,x)= \int_{\R^3} \nabla \mathcal{U}_3(X(t,x)-X(t,z))\nabla X(t,x) \rho_0(dz).
\end{align}
Moreover, by applying $\partial_t^{n}$ 
and applying the Leibniz formula, we arrive at 
\begin{align*}
\partial_t^{n+1} \nabla X(t,x)&=\underset{k=0}{\overset{n}{\sum}} \begin{pmatrix}n \\k \end{pmatrix} \int_{\R^3} \partial_t^k \left( \nabla \mathcal{U}_3(X(t,x)-X(t,z))\right) \partial_t^{n-k}\nabla X(t,x) \rho_0(dz).
\end{align*}
A subtlety here is that it is necessary to distinguish between the case where  $0 \leq k<n$ and the case where $k=n$.

First, for $0 \leq k<n$, it follows from the induction hypothesis \eqref{rec_formule} that 
\begin{align}\label{formule_reccurence_gradX}
\left| \partial_t^{n-k}\nabla X(t,x) \right|&\leq (-1)^{n-k-1}(n-k)! \begin{pmatrix}1/2 \\ n-k \end{pmatrix} C_0^{n-k} C_1^{n-k-1}.
\end{align}
Second using for all $0 \leq k\leq n$, by the multivariate Fa\`a di Bruno formula \eqref{FDB_formula}, 
\begin{align*}
\partial_t^k \left( \nabla \mathcal{U}_3(X(t,x)-X(t,z))\right)&=k! \underset{1\leq |\alpha|\leq n}{\sum} \left(\partial^\alpha \nabla \mathcal{U}_3 \right)(X(t,x)-X(t,z))\\
&\quad \times \underset{s=1}{\overset{n}{\sum}}\underset{P_s(n,\alpha)}{\sum}\underset{j=1}{\overset{s}{\prod}}\frac{\partial_t^{l_j} (X(t,x)-X(t,z))^{k_j}}{(k_j!) (l_j!)^{|k_j|}} .
\end{align*}
Therefore, for $0 \leq k <n$, by  \eqref{alpha-U} and  the induction assumption 
 \eqref{rec_formule},  which is assumed to be  satisfied up to $n \in \N^*$, 
\begin{align*}
&\int_{\R^3} \left|\partial_t^k \left( \nabla \mathcal{U}_3(X(t,x)-X(t,z))\right) \right| \rho_0(dz) \leq k! \underset{1\leq |\alpha|\leq k}{\sum} \int_{\R^3} \frac{K^{|\alpha|+1}(|\alpha|+1)!}{|X(t,x)-X(t,z)|^{2+|\alpha|} }\\
&\times \underset{s=1}{\overset{k}{\sum}}\underset{P_s(n,\alpha)}{\sum}\underset{j=1}{\overset{s}{\prod}}\frac{\left| (-1)^{l_j-1}(l_j)!C_0^{l_j}C_1^{l_j-1} \begin{pmatrix}1/2\\ l_j \end{pmatrix}\right|^{|k_j|} |x-z|^{|k_j|}}{(k_j!) (l_j!)^{|k_j|}}\rho_0(dz)\\
& \leq (-1)^{k} k!C_0^kC_1^k\underset{1\leq |\alpha|\leq k}{\sum}      C_1^{-|\alpha|}K^{1+|\alpha|}(1+|\alpha|)\lambda^{2+|\alpha|} \left(\int_{\R^3}\frac{\rho_0(dz)}{|x-z|^2} \right) \\
&\quad \times (-1)^{|\alpha|}|\alpha|!\underset{s=1}{\overset{k}{\sum}}\underset{P_s(k,\alpha)}{\sum}\underset{j=1}{\overset{s}{\prod}}\frac{ \begin{pmatrix}1/2\\ l_j \end{pmatrix}^{|k_j|} }{(k_j!) },
\end{align*}
by \eqref{aveclambda}.
Therefore,
\begin{align*}
\int_{\R^3} \left|\partial_t^k \left( \nabla \mathcal{U}_3(X(t,x)-X(t,z))\right) \right| \rho_0(dz) 
& \leq (-1)^{k} k!C_0^kC_1^k \underset{1\leq |\alpha|\leq k}{\sum}   (2KL \|\rho_0\| \lambda^2)   (2 C_1^{-1}K\lambda)^{|\alpha|} \\
&\quad \times (-1)^{|\alpha|}|\alpha|!\underset{s=1}{\overset{k}{\sum}}\underset{P_s(k,\alpha)}{\sum}\underset{j=1}{\overset{s}{\prod}}\frac{ \begin{pmatrix}1/2\\ l_j \end{pmatrix}^{|k_j|} }{(k_j!) }\\
&\leq (-1)^{k} (k+1)!C_0^{k+1}C_1^{k+1}/4  \begin{pmatrix}1/2\\ k+1 \end{pmatrix},
\end{align*}
where we used \eqref{formule_magique1} and assumed that $2 C_1^{-1}K\lambda<1  $ and $2KL \|\rho_0\| \lambda^2<\frac{C_0}{2}\frac{C_1}{4} $.

We treat the case $n=k$ slightly differently by assuming that $2KL \|\rho_0\| \lambda^2<\frac{C_0}{2}\frac{1}{4}\frac{1}{\|\nabla X\|_\infty} $ instead.
Thus,
\begin{align*}
\int_{\R^3} \left|\partial_t^n \left( \nabla  \mathcal{U}_3(X(t,x)-X(t,z))\right) \right| \rho_0(dz)  
&\leq (-1)^{n} (n+1)!C_0^{n+1}C_1^{n}\frac{1}{4 \|\nabla X\|_\infty}  \begin{pmatrix}1/2\\ n+1 \end{pmatrix} .
\end{align*}

Gathering the estimates we get
\begin{align*}
&\left|\partial_t^{n+1} \nabla X(t,x)\right|\\
&\leq \underset{k=0}{\overset{n}{\sum}} \begin{pmatrix}n \\k \end{pmatrix} (-1)^{n-k-1}(n-k)!  \begin{pmatrix}1/2 \\n-k \end{pmatrix}C_0^{n-k}C_1^{n-k-1} (-1)^k(k+1)!C_0^{k+1}\frac{C_1^{k+1}}{4} \begin{pmatrix}1/2 \\k+1 \end{pmatrix} \\
&\leq n!  C_1^{n} C_0^{n+1}\underset{k=0}{\overset{n}{\sum}} (k+1) (-1)^{n-k-1}  \begin{pmatrix}1/2 \\n-k \end{pmatrix} (-1)^k\frac{1}{4} \begin{pmatrix}1/2 \\k+1 \end{pmatrix} \\
&\leq (n+1)!  C_1^{n} C_0^{n+1}(-1)^n\begin{pmatrix}1/2 \\n+1 \end{pmatrix},
\end{align*}
where we used \cite[Lemma 4.2]{CVW} for $r=n$ to get 
\begin{equation}\label{formule_magique2}
\underset{k=0}{\overset{n}{\sum}} (k+1) (-1)^k \begin{pmatrix}1/2 \\k+1 \end{pmatrix}  (-1)^{n-k-1}\begin{pmatrix}1/2 \\n-k \end{pmatrix} \leq 4(n+1)(-1)^n  \begin{pmatrix}1/2 \\n+1 \end{pmatrix}.
\end{equation}
Indeed we emphasize that  $(-1)^n  \begin{pmatrix}1/2 \\n+1 \end{pmatrix}\geq 0 $ which allows us to bound the sum on $m=0 \dots n$ and $r=n$ by the double sum on $m,r=0 \cdots n$  in \cite[Lemma 4.2]{CVW}.


\subsection{H\"older estimates}
By  \eqref{lip-e} and Leibniz' rule, 
\begin{align*}
&\partial_t^{n+1}(\nabla X(t,x)-\nabla X(t,y))=
\\
&= \partial_t^n \int_{\R^3} \left[\nabla \mathcal{U}_3(X(t,x)-X(t,z))-\nabla \mathcal{U}_3(X(t,y)-X(t,z)) \right]\nabla X(t,x) \rho_0(dz)\\
&\quad+ \partial_t^n \int_{\R^3} \left[\nabla \mathcal{U}_3(X(t,y)-X(t,z)) \right] \left( \nabla X(t,x)-\nabla X(t,y) \right) \rho_0(dz)\\
&=\underset{k=0}{\overset{n}{\sum}} \begin{pmatrix} n\\ k \end{pmatrix} (A_k+ B_k),
\end{align*}
with 
\begin{align*}
A_k &:= \int_{\R^3} \partial_t^k \left(\nabla \mathcal{U}_3(X(t,x)-X(t,z))-\nabla \mathcal{U}_3(X(t,y)-X(t,z)) \right)\rho_0(dz) \left( \partial_t^{n-k}\nabla X(t,x)\right) ,
\\ B_k &:=\int_{\R^3}  \partial_t^k \left[\nabla \mathcal{U}_3(X(t,y)-X(t,z)) \right] \rho_0(dz) \partial_t^{n-k}\left( \nabla X(t,x)-\nabla X(t,y) \right) .
\end{align*}

For $A_k$, by the Multivariate Fa\`a di Bruno formula \eqref{FDB_formula},
\begin{align*}
& \partial_t^k \left(\nabla \mathcal{U}_3(X(t,x)-X(t,z))-\nabla \mathcal{U}_3(X(t,y)-X(t,z)) \right)\\
&=k! \underset{1\leq |\alpha|\leq k}{\sum} \Big\{  \left(\partial^\alpha \nabla \mathcal{U}_3 \right)(X(t,x)-X(t,z)) \underset{s=1}{\overset{k}{\sum}}\underset{P_s(k,\alpha)}{\sum}\underset{j=1}{\overset{s}{\prod}}\frac{\partial_t^{l_j} (X(t,x)-X(t,z))^{k_j}}{(k_j!) (l_j!)^{|k_j|}}\\
&\quad - \left(\partial^\alpha \nabla \mathcal{U}_3 \right)(X(t,y)-X(t,z)) \underset{s=1}{\overset{k}{\sum}}\underset{P_s(k,\alpha)}{\sum}\underset{j=1}{\overset{s}{\prod}}\frac{\partial_t^{l_j} (X(t,y)-X(t,z))^{k_j}}{(k_j!) (l_j!)^{|k_j|}} \Big\},
\end{align*}
and therefore 
\begin{equation}
    \label{tard}
 \partial_t^k \left(\nabla \mathcal{U}_3(X(t,x)-X(t,z))-\nabla \mathcal{U}_3(X(t,y)-X(t,z)) \right)
=k! \underset{1\leq |\alpha|\leq k}{\sum} G_\alpha(t,x,z)-G_\alpha(t,y,z),
\end{equation}
with, for $1\leq |\alpha|\leq k$, 
$$
G_\alpha(t,x,z):=\left(\partial^\alpha \nabla \mathcal{U}_3 \right)(X(t,x)-X(t,z)) \underset{s=1}{\overset{k}{\sum}}\underset{P_s(k,\alpha)}{\sum}\underset{j=1}{\overset{s}{\prod}}\frac{\partial_t^{l_j} (X(t,x)-X(t,z))^{k_j}}{(k_j!) (l_j!)^{|k_j|}}.
$$
We shall use the following Lemma.
%
\begin{lemme}\label{lemme_Miot_general}
Let $k\geq 1$ and
$1\leq |\alpha|\leq k$. Then  there exists a constant $C_p\geq 2L$ independent of $k$ such that
\begin{equation}\label{Rapp}
    \int \left|G_\alpha(t,y,z) \right|\rho_0(dz) \leq L \|\rho_0\|_{L^1\cap L^p} \Lambda ,
\end{equation}
and
\begin{equation}\label{Rapp-diff}
\int \left|G_\alpha(t,x,z)-G_\alpha(t,y,z)\right| \rho_0(dz)   \leq  \Lambda |x-y|^\mu  C_p\|\rho_0\|_{L^1\cap L^p},
\end{equation}
with 
\begin{equation}
    \label{def-Lamb}
\Lambda:= \lambda^{2+|\alpha|} K^{1+|\alpha|}(2+|\alpha|)! \left(K\lambda+1 \right)\left( \underset{s=1}{\overset{k}{\sum}}\underset{P_s(k,\alpha)}{\sum}\underset{j=1}{\overset{s}{\prod}}\frac{\|\partial_t^{l_j}X(t,\cdot) \|_{1,\mu}^{|k_j|}}{(k_j!) (l_j!)^{|k_j|}}\right).
\end{equation}
\end{lemme}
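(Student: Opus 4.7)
\medskip
\noindent\textbf{Proof plan.}
For the first estimate \eqref{Rapp}, I would apply the Oseen derivative bound \eqref{alpha-U} to control $|(\partial^\alpha\nabla\mathcal{U}_3)(X(t,y)-X(t,z))|$, and use the equivalence \eqref{aveclambda} to replace $|X(t,y)-X(t,z)|$ by $|y-z|/\lambda$, getting a pointwise bound by $\lambda^{2+|\alpha|}K^{1+|\alpha|}(1+|\alpha|)!/|y-z|^{2+|\alpha|}$. Each scalar factor $\partial_t^{l_j}(X_i(t,y)-X_i(t,z))=\partial_t^{l_j}X_i(t,y)-\partial_t^{l_j}X_i(t,z)$ is then estimated by the mean value theorem, yielding the bound $\|\partial_t^{l_j}X(t,\cdot)\|_{1,\mu}|y-z|$. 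Because the partition constraint in $P_s(k,\alpha)$ forces $\sum_j|k_j|=|\alpha|$, the product in the definition of $G_\alpha$ accumulates a factor $|y-z|^{|\alpha|}$ that cancels all but two powers of $|y-z|^{-1}$ coming from the Oseen decay. The residual integrand $\rho_0(z)/|y-z|^2$ is integrable by \eqref{fors}, and the bookkeeping of constants fits under $L\|\rho_0\|_{L^1\cap L^p}\Lambda$ since $(1+|\alpha|)!\leq(2+|\alpha|)!(K\lambda+1)$.

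For \eqref{Rapp-diff}, I would first decompose
$$G_\alpha(t,x,z)-G_\alpha(t,y,z)=[(\partial^\alpha\nabla\mathcal{U}_3)(U)-(\partial^\alpha\nabla\mathcal{U}_3)(V)]\,\Pi_x+(\partial^\alpha\nabla\mathcal{U}_3)(V)\,[\Pi_x-\Pi_y],$$
with $U:=X(t,x)-X(t,z)$, $V:=X(t,y)-X(t,z)$ and $\Pi_x,\Pi_y$ the corresponding products in the definition of $G_\alpha$ evaluated at $x$ or $y$. I would then split the $z$-integration into a near region $\{|y-z|\leq 2\lambda|x-y|\}$ and a far region $\{|y-z|>2\lambda|x-y|\}$. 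On the near region, the triangle inequality reduces the task to bounding $|G_\alpha(t,x,z)|+|G_\alpha(t,y,z)|$ by the pointwise estimate already obtained for \eqref{Rapp}; integrating $|y-z|^{-2}$ over a ball of radius of order $|x-y|$ against $\rho_0\in L^p$ by H\"older's inequality with conjugate exponent $p'=p/(p-1)$ yields precisely the Hölder factor $|x-y|^\mu$, since $3/p'-2=1-3/p=\mu$ identically. On the far region, the segment between $U$ and $V$ keeps a uniform distance from the origin, so the mean value theorem applied to $\partial^\alpha\nabla\mathcal{U}_3$ produces the next-order Oseen bound $K^{2+|\alpha|}(2+|\alpha|)!/|V|^{3+|\alpha|}$ and thus an additional factor $K\lambda$; in parallel, $\Pi_x-\Pi_y$ is handled by telescoping combined with the Lipschitz bound $\|\partial_t^{l_j}X(t,\cdot)\|_{1,\mu}|x-y|$ on each factor. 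Using $|x-y|^{1-\mu}\leq(|y-z|/2\lambda)^{1-\mu}$, which holds throughout the far region, the leftover $|x-y|$ is converted into $|x-y|^\mu|y-z|^{1-\mu}$, restoring integrability of the kernel against $\rho_0\in L^1\cap L^p$.

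The principal technical difficulty is the far-region calculation, where the integrand reduces to $|x-y|^\mu|y-z|^{-(2+\mu)}$ with exponent precisely at the Morrey borderline $(2+\mu)p'=3$. A naive H\"older application would produce a spurious $\log(1/|x-y|)$ factor, which must be avoided by subdividing the far region further at the unit scale, bounding the inner annulus through $\|\rho_0\|_{L^p}$ with a refined weighted estimate and the outer region through $\|\rho_0\|_{L^1}$, in the same spirit as the endpoint argument used in the proof of Proposition \ref{prop_log_lip_u} and exploiting that $W^{2,p}\hookrightarrow C^{1,\mu}$ is a non-endpoint embedding when $p>3$. Once this is done, the near-region constant carries a factor $(1+|\alpha|)!$ and the far-region constant carries a factor $K\lambda(2+|\alpha|)!$, whose sum is majorized by $(K\lambda+1)(2+|\alpha|)!$, reproducing exactly the combinatorial factor appearing in the definition \eqref{def-Lamb} of $\Lambda$, and the constant $C_p\geq 2L$ in \eqref{Rapp-diff} absorbs the dependence of the near-region H\"older constant on $p$.
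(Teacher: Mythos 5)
Your treatment of \eqref{Rapp} and of the near region of \eqref{Rapp-diff} follows the paper: pointwise Oseen bound \eqref{alpha-U}, equivalence \eqref{aveclambda}, the cancellation of $|\alpha|$ powers of $|y-z|$ from the product factors, and the $L^1\cap L^p$ integration via \eqref{fors}; the near-region H\"older computation over a ball of radius $\sim|x-y|$ indeed gives $|x-y|^{3/p'-2}=|x-y|^\mu$. Your decomposition of $G_\alpha(t,x,z)-G_\alpha(t,y,z)$ into an Oseen-difference piece and a $\Pi_x-\Pi_y$ piece is a reasonable alternative to the paper's $C^1$-path argument from \cite{Hofer} used to get \eqref{lipschitz_type}; both routes deliver the Lipschitz-type kernel $|x-y|\bigl(|x-z|^{-3}+|y-z|^{-3}\bigr)$ on the far region.

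The genuine gap is in how you then integrate the far region. You propose to convert $|x-y|$ into $|x-y|^\mu|y-z|^{1-\mu}$ \emph{before} applying H\"older, producing the kernel $|x-y|^\mu|y-z|^{-(2+\mu)}$. You correctly notice that $(2+\mu)p'=3$ exactly, so the H\"older integral $\int_{c|x-y|}^{1}s^{2-(2+\mu)p'}\,ds=\int_{c|x-y|}^{1}s^{-1}\,ds$ is logarithmic, and no ``refined weighted estimate'' against $\|\rho_0\|_{L^p}$ on that annulus will remove the $\log(1/|x-y|)$ once the conversion has been made: the criticality is intrinsic to the exponent $2+\mu$. The fix is not a sharper bound on the converted kernel but simply to \emph{not} convert. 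Keeping the Lipschitz kernel intact, one applies H\"older to $|y-z|^{-3}$ over the truncated annulus $\{d\lesssim|y-z|\leq 1\}$ with $d=|x-y|$: since $2-3p'<-1$ strictly when $p>3$, the integral $\int_{d/2}^{1+d/2}s^{2-3p'}\,ds$ is of order $d^{3-3p'}$ with no log, so after the $1/p'$-power and the prefactor $|x-y|=d$ one lands exactly on $d^{1+3/p'-3}=d^{\mu}$. The outer region $\{|y-z|>1\}$ is then trivially controlled by $|x-y|\,\|\rho_0\|_{L^1}\leq|x-y|^\mu\|\rho_0\|_{L^1}$ since $d<1$. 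This is precisely what the paper does with its three-region split around $A=(x+y)/2$ — inner ball $B(A,d)$ via the pointwise bound \eqref{borne_G}, middle annulus $B(A,1)\setminus B(A,d)$ via the Lipschitz bound \eqref{lipschitz_type} and a truncated H\"older integral, outer region $\R^3\setminus B(A,1)$ via the Lipschitz bound and $\|\rho_0\|_{L^1}$. Your two-region split can be made to work, but you must postpone the extraction of the $|x-y|^\mu$ factor to \emph{after} the H\"older computation, exactly as in the paper's $J_2$ estimate.

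One smaller caveat: your claim that the straight segment from $U=X(t,x)-X(t,z)$ to $V=X(t,y)-X(t,z)$ stays away from the origin throughout the far region requires $|y-z|$ to exceed roughly $2\lambda^2|x-y|$ rather than $2\lambda|x-y|$ (since $|U-V|\leq\lambda|x-y|$ while $\min(|U|,|V|)\gtrsim|y-z|/\lambda$); this only affects the constant but should be adjusted. The paper sidesteps this by establishing the Lipschitz bound \eqref{lipschitz_type} globally via the curved path of \cite[Lemma 3.15]{Hofer}, which you may find cleaner.
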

\begin{proof}
First, by  \eqref{alpha-U}, 
\begin{eqnarray*}
|G_\alpha(t,x,z)|&\leq \frac{K^{2+|\alpha|}(2+|\alpha|)!}{|X(t,x)-X(t,z)|^{2+|\alpha|}}\left( \underset{s=1}{\overset{k}{\sum}}\underset{P_s(k,\alpha)}{\sum}\underset{j=1}{\overset{s}{\prod}}\frac{\|\partial_t^{l_j}X(t,\cdot) \|_{1,\mu}^{|k_j|}}{(k_j!) (l_j!)^{|k_j|}}\right)|x-z|^{|\alpha|} ,
\end{eqnarray*}
and therefore, by \eqref{aveclambda}, 
\begin{eqnarray}\label{borne_G}
|G_\alpha(t,x,z)| \leq \Lambda_1 \frac{1}{|x-z|^2} ,
\end{eqnarray}
with 
$$
\Lambda_1= \lambda^{2+|\alpha|}K^{2+|\alpha|}(2+|\alpha|)!\left( \underset{s=1}{\overset{k}{\sum}}\underset{P_s(k,\alpha)}{\sum}\underset{j=1}{\overset{s}{\prod}}\frac{\|\partial_t^{l_j}X(t,\cdot) \|_{1,\mu}^{|k_j|}}{(k_j!) (l_j!)^{|k_j|}}\right).
$$
Observing that, since $\lambda>1$, $ \Lambda_1 \leq \Lambda$, where we recall that $\Lambda$ is defined in  \eqref{def-Lamb}, we conclude the proof of 
\eqref{Rapp} using \eqref{fors}.

Next, to prove \eqref{Rapp-diff}, 
we first observe that, for any distinct $x$, $y$ and $z$ in $\R^3$, we have that 
\begin{align}\label{app1}
|G_\alpha(t,x,z)-G_\alpha(t,y,z)| \leq \underset{u\in S_{x,y}}{\sup}|\nabla_u G_\alpha(t,u,z)| |x-y|,
\end{align}
where $S_{x,y}$ is a $C^1$ path from $x$ to $y$ 
such  that for all $u \in S_{x,y}$, 
\begin{align}\label{app2}
 |u-z| \geq \min(|z-x|,|z-y|).
 \end{align}
We refer to  \cite[Lemma 3.15]{Hofer} for the proof of existence of such a path. 

Now, using \eqref{alpha-U} and \eqref{aveclambda}, 
we obtain that
\begin{align*}
|\partial_{u} G_\alpha(t,u,z)|&\leq \frac{K^{2+|\alpha|}(2+|\alpha|)!}{|X(t,u)-X(t,z)|^{3+|\alpha|}}|u-z|^{|\alpha|}\left( \underset{s=1}{\overset{k}{\sum}}\underset{P_s(k,\alpha)}{\sum}\underset{j=1}{\overset{s}{\prod}}\frac{\|\partial_t^{l_j}X(t,\cdot) \|_{1,\mu}^{|k_j|}}{(k_j!) (l_j!)^{|k_j|}}\right)\\
&+  \frac{K^{1+|\alpha|}(1+|\alpha|)!}{| X(t,u)-X(t,z)|^{2+|\alpha|} }|\alpha||u-z|^{|\alpha|-1}\left( \underset{s=1}{\overset{k}{\sum}}\underset{P_s(k,\alpha)}{\sum}\underset{j=1}{\overset{s}{\prod}}\frac{\|\partial_t^{l_j}X(t,\cdot) \|_{1,\mu}^{|k_j|}}{(k_j!) (l_j!)^{|k_j|}}\right)\\
&\leq \lambda^{2+|\alpha|} K^{1+|\alpha|}(2+|\alpha|)! \left(K\lambda+1 \right) \frac{1}{|u-z|^3}\left( \underset{s=1}{\overset{k}{\sum}}\underset{P_s(k,\alpha)}{\sum}\underset{j=1}{\overset{s}{\prod}}\frac{\|\partial_t^{l_j}X(t,\cdot) \|_{1,\mu}^{|k_j|}}{(k_j!) (l_j!)^{|k_j|}}\right).
\end{align*}
By combining with \eqref{app1} and \eqref{app2}, 
this ensures that 
\begin{equation}\label{lipschitz_type}
|G_\alpha(t,x,z)-G_\alpha(t,y,z)|\leq \Lambda |x-y| \left(\frac{1}{|x-z|^3}+\frac{1}{|y-z|^3} \right),
\end{equation}
recalling the definition of 
$
\Lambda
$ in  \eqref{def-Lamb}.

Now, following  the proof of \cite[Lemma 2]{Miot} with the kernel $G_\alpha(t,x,z)$, we 
 set $d:=|x-y|$ and observe that, thanks to \eqref{borne_G}, it is enough to show the result for $d<1 $. Indeed if $d=|x-y|\geq 1$ then 
 \begin{equation}\label{eq:Holder_d>1}
 \int_{\R^3} |G_\alpha(t,x,z)-G_\alpha(t,y,z)|\rho_0(dz)\leq 2 \underset{x}{\sup}\int_{\R^3} |G_\alpha(t,x,z)|\rho_0(dz) \leq 2\Lambda_1 L \|\rho_0\|_{L^1\cap L^p} \leq 2\Lambda_1 L \|\rho_0\|_{L^1\cap L^p} |x-y|^\mu.
 \end{equation}
 Now assume $d<1$, we
 set $A:=\frac{x+y}{2}$, and we split  the following integral into three parts:
\begin{align}\label{trich}
&\int_{\R^3} |G_\alpha(t,x,z)-G_\alpha(t,y,z)|\rho_0(dz) =J_1 +J_2+ J_3,
\end{align}
where
\begin{align*}
J_1 :=&\int_{\R^3\setminus B(A,1)}|G_\alpha(t,x,z)-G_\alpha(t,y,z)|\rho_0(dz) ,
\\  J_2 :=&\int_{B(A,1)\setminus B(1,d)}|G_\alpha(t,x,z)-G_\alpha(t,y,z)|\rho_0(dz) ,
\\  J_3 :=& \int_{B(A,d)}|G_\alpha(t,x,z)-G_\alpha(t,y,z)|\rho_0(dz) .
\end{align*}
\begin{itemize}
    \item For $J_1$, we have for all $z\in B(A,d)$, $|z-x|\leq |z-1|+d/2 \leq 3d/2$. This yields,  by \eqref{borne_G}, that 
\begin{align*}
J_1&\leq 2 \Lambda_1 \int_{B(x,3d/2)} \frac{1}{|x-z|^2}\rho_0(dz)\\
&\leq  C\Lambda_1  \left(\int_0^{3d/2} s^{2-2p'} \right)^{1/{p'}} \|\rho_0\|_{L^p} ,
\end{align*}
and therefore 
\begin{align}\label{J1}
J_1 &\leq \tilde{C}_p \Lambda_1 \|\rho_0\|_{L^p} d^{3/{p'}-2}\leq  \tilde{C}_p \Lambda \|\rho_0\|_{L^p} d^{3/{p'}-2}.
\end{align}
\item For $J_2$ we use that for $ z\in B(A,1)\setminus B(A,d)$, 
$$
|z-x|>|z-A|-d/2>d/2
\text{ and } |z-x|\leq |z-A|+d/2 \leq 1+d/2 ,$$ 
and \eqref{lipschitz_type} to arrive at
\begin{align*}
J_2&\leq  |x-y|\Lambda \int_{ B(A,1)\setminus B(1,d)}\left(\frac{1 }{|x-z|^3}+\frac{1}{|y-z|^3}  \right)\rho_0(dz)\\
&\leq C |x-y|\Lambda \|\rho_0\|_{L^p} \left(\int_{d/2}^{1+d/2} s^{2-3p'}\right)^{1/{p'}} ds ,
\end{align*}
and therefore 
\begin{align}\label{J2}
J_2&\leq \tilde{C}_p |x-y|\Lambda \|\rho_0\|_{L^p}d^{3/{p'}-3}=\tilde{C}_p \Lambda \|\rho_0\|_{L^p}d^{3/{p'}-2} .
\end{align}
\item For $J_3$ we have that any $z$ such that $|z-A|>1$ satisfies 
$$
|z-x|>|z-A|- \frac{1}{2} d>|z-A|- \frac{1}{2} > \frac{1}{2} |z-A|,
$$
which yields using \eqref{lipschitz_type},
\begin{align}\label{J3}
J_3 &\leq \Lambda |x-y| \int_{\R^3\setminus B(A,1)}\left(\frac{1}{|x-z|^3}+\frac{1}{|y-z|^3}  \right)\rho_0(dz)\leq  16|x-y|\Lambda \|\rho_0\|_{L^1} .
\end{align}
\end{itemize}
By combining \eqref{eq:Holder_d>1},  \eqref{trich},  \eqref{J1},  \eqref{J2} and  \eqref{J3}, we establish the inequality \eqref{Rapp-diff} by choosing $C_p\geq \max(2L, 2\tilde{C}_p+16)$. 
This concludes the proof of Lemma \ref{lemme_Miot_general}.
\end{proof}
We start with the treatment of the term $A_k$ which is the most difficult.
 By combining \eqref{tard} and \eqref{Rapp-diff}
 we first deduce that
\begin{align*}
&\frac{1}{|x-y|^\mu}\int_{\R^3}
\left|  \partial_t^k \left(\nabla \mathcal{U}_3(X(t,x)-X(t,z))-\nabla \mathcal{U}_3(X(t,y)-X(t,z)) \right)\rho_0(dz)  \right| \\
& \leq k! \underset{1 \leq |\alpha|\leq k}{\sum} \lambda^{2+|\alpha|} K^{1+|\alpha|}C_p \|\rho_0\|_{L^1\cap L^p}(2+|\alpha|)! \left(K\lambda+1 \right)\left( \underset{s=1}{\overset{k}{\sum}}\underset{P_s(k,\alpha)}{\sum}\underset{j=1}{\overset{s}{\prod}}\frac{\|\partial_t^{l_j}X(t,\cdot) \|_{1,\mu}^{|k_j|}}{(k_j!) (l_j!)^{|k_j|}}\right) .
\end{align*}
 Then we use the induction assumption 
 \eqref{rec_formule},  which is assumed to be  satisfied up to $n \in \N^*$, to infer that 
\begin{align*}
&\frac{1}{|x-y|^\mu}\int_{\R^3}  \left|    \partial_t^k \left(\nabla \mathcal{U}_3(X(t,x)-X(t,z))-\nabla \mathcal{U}_3(X(t,y)-X(t,z)) \right)\rho_0(dz) \right| \\
& \leq k! \underset{1 \leq |\alpha|\leq k}{\sum} \lambda^{2+|\alpha|} K^{1+|\alpha|}C_p \|\rho_0\|_{L^1\cap L^p}(2+|\alpha|)! \left(K\lambda+1 \right)\left( \underset{s=1}{\overset{k}{\sum}}\underset{P_s(k,\alpha)}{\sum}\underset{j=1}{\overset{s}{\prod}}\frac{\left( (-1)^{l_j-1} (l_j!) \begin{pmatrix}1/2\\ l_j \end{pmatrix}C_0^{l_j}C_1^{l_j-1}\right)^{|k_j|}}{(k_j!) (l_j!)^{|k_j|}}\right)\\
& \leq k!C_0^k C_1^k  (-1)^k \underset{1 \leq |\alpha|\leq k}{\sum} \lambda^{2+|\alpha|} C_1^{-|\alpha|}(-1)^{|\alpha|}K^{1+|\alpha|}C_p \|\rho_0\|_{L^1\cap L^p}(2+|\alpha|)! \left(K\lambda+1 \right)\left( \underset{s=1}{\overset{k}{\sum}}\underset{P_s(k,\alpha)}{\sum}\underset{j=1}{\overset{s}{\prod}}\frac{ \begin{pmatrix}1/2\\ l_j \end{pmatrix})^{|k_j|}}{(k_j!)}\right)\\ 
&= k!C_0^k C_1^k  (-1)^k \underset{1 \leq |\alpha|\leq k}{\sum} \lambda^{2+|\alpha|} C_1^{-|\alpha|}K^{1+|\alpha|}C_p \|\rho_0\|_{L^1\cap L^p}2^{2+|\alpha|}2^{1+|\alpha|} \left(K\lambda+1 \right)\left( (-1)^{|\alpha|} |\alpha|! \underset{s=1}{\overset{k}{\sum}}\underset{P_s(k,\alpha)}{\sum}\underset{j=1}{\overset{s}{\prod}}\frac{ \begin{pmatrix}1/2\\ l_j \end{pmatrix})^{|k_j|}}{(k_j!)}\right).
\end{align*}

 We now assume that $4\lambda C_1^{-1}K<1$ and $ \lambda^2 K C_p \|\rho_0\|_{L^1\cap L^p}2^3(K\lambda+1)\leq  \frac{C_0}{2} \frac{C_1}{4}$, in order to get for $n\neq k$
\begin{align*}
&\frac{1}{|x-y|^\mu}
 \left|    \int \partial_t^k \left(\nabla \mathcal{U}_3(X(t,x)-X(t,z))-\nabla \mathcal{U}_3(X(t,y)-X(t,z)) \right)\rho_0(dz)  \right| \\
&\leq \frac{1}{4}(k+1)!C_0^{k+1} C_1^{k+1}  (-1)^k \begin{pmatrix}1/2\\ k+1 \end{pmatrix},
\end{align*}
by using again \eqref{formule_magique1}.
Again, we treat the case $n=k$ differently by assuming  $$ \lambda^2 K C_p \|\rho_0\|_{L^1\cap L^p}2^3(K\lambda+1)\leq  \frac{C_0}{2} \frac{1}{4\|\nabla X\|_{0,\mu}},$$ which yields
\begin{align*}
&\frac{1}{|x-y|^\mu}\int_{\R^3} \partial_t^n \left(\nabla \mathcal{U}_3(X(t,x)-X(t,z))-\nabla \mathcal{U}_3(X(t,y)-X(t,z)) \right)\rho_0(dz)\\
&\leq \frac{1}{4 \|\nabla X\|_{0,\mu}}(n+1)!C_0^{n+1} C_1^{n}  (-1)^n \begin{pmatrix}1/2\\ n+1 \end{pmatrix}.
\end{align*}

We get, by using again \eqref{formule_magique1} and \eqref{formule_reccurence_gradX}, that 
\begin{align*}
\underset{k=0}{\overset{n}{\sum}}\begin{pmatrix}n \\k \end{pmatrix}\frac{|A_k|}{|x-y|^\mu}&\leq 
\underset{k=0}{\overset{n}{\sum}}\begin{pmatrix}n \\k \end{pmatrix}(k+1)!C_0^{k+1} C_1^{k+1} (-1)^k   \frac{1}{4} \begin{pmatrix}1/2\\ k+1 \end{pmatrix}(-1)^{n-k-1}(n-k)! \begin{pmatrix}1/2 \\ n-k \end{pmatrix} C_0^{n-k} C_1^{n-k-1}\\
&\leq n! \underset{k=0}{\overset{n}{\sum}}\frac{1}{4} C_0^{n+1} C_1^{n}   (k+1)(-1)^k\begin{pmatrix}1/2\\ k+1 \end{pmatrix} (-1)^{n-k-1}\begin{pmatrix}1/2 \\ n-k \end{pmatrix} \\
&\leq C_0^{n+1} C_1^{n}  (-1)^n (n+1)!\begin{pmatrix}1/2 \\ n+1 \end{pmatrix},
\end{align*}
where we used \eqref{formule_magique2}.

Analogously for $B_k$ we emphasize that 
$$
\partial_t^k \left[\nabla \mathcal{U}_3(X(t,y)-X(t,z)) \right]=k! \underset{1\leq |\alpha|\leq k}{\sum} G_\alpha(t,y,z), 
$$
and that, analogously to \eqref{formule_reccurence_gradX} we have for $n \neq k$
$$
 \left| \partial_t^{n-k}\left(\nabla X(t,x)-\nabla X(t,y)\right) \right|\leq (-1)^{n-k-1}(n-k)! \begin{pmatrix}1/2 \\ n-k \end{pmatrix} C_0^{n-k} C_1^{n-k-1} |x-y|^\mu .
$$
Using \eqref{Rapp} and 
the two estimates above, we can treat the term $B_k$ as the term $A_k$ and conclude the proof.

\section{Proof of Theorem \ref{TS-control}}\label{sec-cont}
\label{sec-control}
This section is devoted to the proof of Theorem \ref{TS-control}. 
By  a reversibility argument, it is enough to find a control which sends the initial density $\rho_0$ to $0$ on the time interval $[0,T/2]$, indeed one can then proceed in a similar way and send $0$ to the final density $\rho_f$ in the time interval $[T/2,T]$ by switching the time variable $t$ into $T-t$.
We emphasize that it is actually sufficient to prove that it is possible to prove that there exists $\eps\in (0,\frac12)$ and a control which steers the initial density $\rho_0$ to $0$ on the time interval $[0,\eps T]$, since then one may turn off the control and let the density stays at zero during the rest of the interval $]\epsilon T, T/2[$.

 Without loss of generality we assume that there exists $\delta>0$ small enough such that $B(0,4\delta) \subset \omega$.

 \subsection{Construction of a controlled auxiliary solution  starting from $0$ and returning at $0$}

 Following \cite{G20}, in this subsection we construct, for any $T>0$, a vector field $u$ with zero initial and final values and such that during the imparted time the corresponding flow map sends the support of $\rho_0$ to the ball $B(0,2 \delta) $ in several time steps and then back to the support of $\rho_0$. More precisely we have the following result.

 \begin{prop}\label{lemme1}
Let $T>0$, $\omega$ and $\rho_0$ as above. There exists  $f\in C_c((0,T);L^\infty(\R^3))$ compactly supported in $(0,T)\times \omega$  and $(u_\aux,p_{\aux}) \in C_c((0,T);\bigcup_{q\geq3} W^{2,q}(\R^3) \times \bigcup_{q\geq3}W^{1,q}(\R^3))$ satisfying 
$$-\Delta u_\aux +\nabla p_\aux = f \quad  \text{ and } \quad \div u_\aux=0,
 \quad \text{ on } \quad \R^3 , $$ 
 and
there exists  a covering $\underset{i=1}{\overset{L}{\bigcup}} B(x_i,r_i)$ of $\supp \rho_0$ and a sequence $0\leq t_i<t_{i+1/4}<t_{i+1/2}<t_{i+1}\leq T/2$, $0\leq i \leq L $ with $t_0=T/4$, $t_{L+1}=T/2$ such that 
\begin{equation}\label{forumule_controle_zone}
 \forall 1 \leq i \leq L,\: \forall x\in B(x_i, r_i), \quad \forall \, t \in ]t_{i+1/4},t_{i+1/2}[ \,\Rightarrow
X_\aux(t,0,x)\in B(0,2\delta) ,
\end{equation}
$$
\forall 1 \leq i \leq L, \forall\, x \in \R^3, X_{\aux}(t_i,0,x)=x .
$$
\end{prop}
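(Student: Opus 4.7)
The proof follows Coron's return method as adapted to the Stokes setting by Glass in \cite{G20}. The strategy is to construct $u_\aux$ piecewise in time, with one slot per ball of a finite cover of $\supp \rho_0$: on each slot the velocity takes the product form (scalar time profile) $\times$ (Stokes-compatible spatial field). The spatial field is a divergence-free vector field on $\R^3$ that solves Stokes with a source supported in $\omega$ and whose autonomous flow carries the corresponding ball into $B(0,2\delta)$ in finite time. A time profile with zero net integral then forces the cumulative flow to return to identity at each endpoint $t_i$, because the flow of a time-modulated autonomous field is a time reparameterization of that autonomous flow.

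I would first cover the compact set $\supp \rho_0$ by finitely many open balls $B(x_i,r_i)$, $i=1,\dots,L$, partition $[T/4, T/2]$ into $L$ time slots $[t_i,t_{i+1}]$, and choose intermediate times $t_i<t_{i+1/4}<t_{i+1/2}<t_{i+1}$ inside each. For the hard analytic step I would invoke the Runge-type density input borrowed from \cite{G20} (together with standard Lax--Malgrange theory for elliptic operators): given a smooth path $\gamma_i$ from $x_i$ to a point of $B(0,\delta)$, produce a smooth divergence-free vector field $u^{(i)}=\mathcal U \star f^{(i)}$ on $\R^3$, where $f^{(i)}\in C_c^\infty(\omega;\R^3)$, whose autonomous flow $\Phi^{(i)}_s$ satisfies $\Phi^{(i)}_{s_i}(x_i)\in B(0,\delta)$ for some $s_i>0$. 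Since $u^{(i)}$ is obtained by Oseen-convolution with a smooth compactly supported source, it lies in $\bigcap_{q\geq 3}W^{2,q}(\R^3)$, is divergence-free, and its Stokes residual $-\Delta u^{(i)}+\nabla p^{(i)}=f^{(i)}$ is supported in $\omega$ by construction. By continuous dependence of the ODE flow on the initial condition, shrinking $r_i$ if necessary one ensures $\Phi^{(i)}_{s_i}(B(x_i,r_i))\subset B(0,2\delta)$.

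Next I would pick smooth time profiles $\chi_i\in C_c^\infty((t_i,t_{i+1}))$ vanishing on $[t_{i+1/4},t_{i+1/2}]$, with $\int_{t_i}^{t_{i+1/4}}\chi_i=s_i$ and $\int_{t_{i+1/2}}^{t_{i+1}}\chi_i=-s_i$, so that $\int_{t_i}^{t_{i+1}}\chi_i=0$. Set $u_\aux(t,x):=\chi_i(t)u^{(i)}(x)$, $p_\aux(t,x):=\chi_i(t)p^{(i)}(x)$, and $f(t,x):=\chi_i(t)f^{(i)}(x)$ on the $i$-th slot, and zero outside $\bigcup_i[t_i,t_{i+1}]\subset(0,T)$. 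On each slot the ODE $\dot X=\chi_i(t)u^{(i)}(X)$ yields $X_\aux(t,t_i,\cdot)=\Phi^{(i)}_{\tau_i(t)}$ with $\tau_i(t):=\int_{t_i}^t\chi_i$, so that $\tau_i(t_{i+1})=0$ gives slot-wise return to identity, and iterating over slots produces $X_\aux(t_i,0,\cdot)=\Id$ for every $i$. On $(t_{i+1/4},t_{i+1/2})$ one has $\tau_i\equiv s_i$, whence $X_\aux(t,0,B(x_i,r_i))=\Phi^{(i)}_{s_i}(B(x_i,r_i))\subset B(0,2\delta)$, which is \eqref{forumule_controle_zone}.

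The main obstacle is the Runge-type construction of $u^{(i)}$: producing a Stokes flow on $\R^3$ with source confined to $\omega$ whose autonomous flow realizes a prescribed transport from $x_i$ to $B(0,\delta)$. This is the substantive analytic input from \cite{G20}, and it relies crucially on the fact that $B(0,4\delta)\subset\omega$, which guarantees a reservoir of admissible source positions near the target. Everything else --- the covering, the time modulation, the Bogovski/divergence bookkeeping inside the convolution $\mathcal U\star f^{(i)}$, and the flow reparameterization --- is essentially routine. A mild subtlety to keep in mind is that during the $i$-th slot the vector field $u^{(i)}$ drags the whole of $\R^3$, not just $B(x_i,r_i)$; but the identity condition at $t_{i+1}$ restores the positions of all the other balls before the next slot begins, so the slot-by-slot construction indeed decouples.
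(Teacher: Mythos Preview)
Your approach is correct but differs from the paper's in the structure of the per-slot building blocks. The paper does \emph{not} use autonomous spatial fields modulated by scalar time profiles; instead, on each slot it builds a genuinely time-dependent Stokes velocity that tracks a prescribed curve. Concretely, the paper first proves a pointwise reachability lemma (Lemma~\ref{lemme3}): for any $x\notin \overline{B(\alpha,r)}\subset\omega$ and any $a\in\R^3$, there is $f$ supported in $B(\alpha,r)$ with $(\mathcal U\star f)(x)=a$, obtained by an explicit computation with the Oseen tensor. From this it deduces (Lemma~\ref{lemme2}) that for any smooth curve $\gamma:[0,1]\to\R^3\setminus\overline{B(\alpha,r)}$, stationary near its endpoints, there is a \emph{time-dependent} $f\in C_c((0,1);L^\infty)$ supported in $(0,1)\times B(\alpha,r)$ whose associated Stokes flow satisfies $X(t,0,\gamma(0))=\gamma(t)$. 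Return to identity is then obtained by running this time-dependent field backward on the second half of each slot, rather than via a zero-integral scalar profile.

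Your product-form construction is more economical --- the slot flow is literally a reparameterization $\Phi^{(i)}_{\tau_i(t)}$ of a single autonomous flow, and $\int\chi_i=0$ gives the return for free --- but the price is the existence of a single autonomous Stokes field, sourced in $\omega$, whose integral curve through $x_i$ reaches $B(0,\delta)$. This is where your proposal is thin. The pointer to \cite{G20} is not quite apt: that paper (3D Euler) uses time-dependent harmonic potentials, not an autonomous field with a prescribed integral curve. What actually makes your step go through is a Runge/Lax--Malgrange approximation for the Stokes system: approximate the constant field $V=(0-x_i)/|0-x_i|$ (a homogeneous Stokes solution) uniformly on a thin tube $K$ around the segment $[x_i,0]$ by some $u^{(i)}=\mathcal U\star f^{(i)}$ with $f^{(i)}$ supported in a ball $B\subset\omega\setminus K$; since $V$ has Lipschitz constant zero, a direct Gr\"onwall argument keeps the integral curve of $u^{(i)}$ through $x_i$ inside $K$ and delivers it to $B(0,\delta)$. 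You should also note, as the paper does, that a second source ball $B(\beta,r')\subset\omega$ is needed to handle those $x_i$ that happen to lie in the first source ball. With these clarifications your argument is complete; the paper's route via Lemmas~\ref{lemme2}--\ref{lemme3} avoids the Runge step by working pointwise in time along the curve with the explicit Oseen tensor.
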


\begin{proof}[Proof of Proposition \ref{lemme1}]
By a scaling argument we may choose $T=1$. Let $\alpha \in \omega $ and $r>0$ such that $B(\alpha,r)\subset \omega \setminus \overline{}{B(0,4 \delta)}$.
The proof relies on the following Lemma that we prove later.
\begin{lemme}\label{lemme2}
Let $ \gamma : [0,1] \to \R^3\setminus \overline{B(\alpha,r)}$ a  curve of class $C^\infty$ such that 
$\gamma(t)=\gamma(0)$ for $t \in [0,\zeta]$ and 
$\gamma(t)=\gamma(1)$, for $ t \in [1-\zeta,1]$, 
for a given small $\zeta>0$. Then there exists
$f\in C(0,1,L^\infty(\R^3))$ compactly supported in $(0,1)\times B(\alpha,r)$ and the unique associated solution $u\in C_c(0,1,\bigcap_{q\geq 3}W^{2,q}(\R^3) )$ of 
$$-\Delta u +\nabla p = f,\quad \div u =0,\quad  \text{ on } \R^3$$
such that the associated characteristic flow $X$  satisfies $X(t,0,\gamma(0))=\gamma(t)$.
\end{lemme}

Now let $a \in \supp \rho_0 \setminus \overline{B(\alpha,r)}$. 
There exists  a  curve $\gamma_a:[0,1] \to \R^3 \setminus \overline{B(\alpha,r)}$  of class $C^\infty$ such that $$\gamma_a(t)=a \text{ for } t\in[0,1/4], \: \gamma_a(t)=0 \text{ for }t\in[1/2, 1]$$ 
Denote by $u_a$ the velocity and $f_a$ the right hand side obtained by Lemma \ref{lemme2} associated to the curve $\gamma_a$. 
By continuity of the flow $X_a$, there exists $r_a>0$ and $\eta_a>0$ small enough such that for all $x \in B(a,r_a)$ and for all $t \in ]1/2-\eta_a,1/2+\eta_a[$ we have $X_a(t,0,x)\in B(0,2\delta)$. We emphasize that the stationarity of the curve near $t=0$ and $t=1$ is crucial in order to have a compactly supported velocity field $u_a$.

Now for $ a \in \supp \rho_0\cap \overline{B(\alpha,r)} $ we consider  $B(\beta,r')$ such that  $B(\beta,r') \subset  \omega \setminus \overline{\left( B(0,4\delta)\cup B(\alpha,r)\right )}$ and apply Lemma \ref{lemme2} for $B(\beta,r')$ instead of $B(\alpha,r)$ in order to drive each point $a$ of $\supp \rho_0\cap \overline{B(\alpha,r)}$ to $0$. This yields the existence of a control $u_a$ satisfying a Stokes equation with a right hand side $f_a$ compactly supported in $B(\beta,r')$ and we may apply the same continuity argument as above to get the existence of $r_a$ and $\eta_a$ such that $X_a(t,0,x)\in B(0,2\delta)$ for $x\in B(a,r_a)$, $t\in ]1/2-\eta_a,1/2+\eta_a[$.

Hence, to summarize, for all $a\in \supp \rho_0 $, there exists $r_a>0$ and $u_a$ compactly supported in $(0,1)$  which solves the Stokes equation with a right hand side $f_a$ compactly supported in $(0,1)\times \omega$ and such that for all $x \in B(a,r_a)$, in $X_a(t,0,x) \in B(0,2\delta)$ for $t\in]1/4,1/2[$.

By compactness one can extract a finite subcover
$$\underset{i=1}{\overset{L}{\bigcup}}B(x_i,r_i),$$ 
of $\supp \rho_0$. 
Let $(u_i)_i$ the associated velocity fields.
 We split the time segment $[1/4,1/2]$  by considering the subdivision%
\begin{gather*}
t_0=1/4, \\
t_i=t_0+ \frac{i}{4(L+1)}, \quad \forall\: i \in \{1, \cdots, L+1\},\\
t_{i+1/4}=t_i +\frac{1}{4} \frac{1}{4(L+1)}, \quad \forall\: i \in \{0, \cdots, L\}, \\ 
t_{i+1/2}=t_i +\frac{1}{2} \frac{1}{4(L+1)},\quad  \forall\: i \in \{0, \cdots, L\}.
\end{gather*}
 We set then 
\begin{align*}
    u_\aux(t,x)&=8(L+1)u_i(8(L+1)(t-t_{i}),x), \: 0 \leq i \leq L, \: t \in [t_{i},t_{i+1/2}],
\\ u_\aux(t,x)&=-8(L+1)u_i(8(L+1)(t_{i}-t),x), \: 0 \leq i \leq L, \: t \in [t_{i+1/2},t_{i+1}].
\end{align*}
$u_\aux$ is then well defined and compactly supported in $(0,T)$ since each $u_i$ is compactly supported. Moreover, $u_\aux$ solves the Stokes equation on $\R^3 \setminus \omega$.
Moreover, on each time segment $]t_{i},t_{i+1}[ $  the domain $B(x_i,r_i)$ is transported inside $B(0,2\delta)$ during the time interval $]t_{i+1/4},t_{i+1/2}[$ and then again to $B(x_i,r_i)$ during the time interval $]t_{i+1/2},t_{i+1}[$, more precisely 
 the characteristic flow  $X_\aux$ associated with $u_\aux$ satisfies
\begin{equation*}
 \forall 1 \leq i \leq L,\: \forall x\in B(x_i, r_i), \quad \forall \, t \in ]t_{i+1/4},t_{i+1/2}[ \,\Rightarrow
X_\aux(t,0,x)\in B(0,2\delta).
\end{equation*}
$$
 \forall 1 \leq i \leq L,\forall \, x \in \R^3, \: X_\aux(t_{i},0,x)=x .$$
\end{proof}

Following \cite[Proposition 3.2.8]{G}, the proof of Lemma \ref{lemme2} relies on the following result which establishes the pointwise reachability of any value in $\R^3$ by the velocity vector fields satisfying  controlled steady Stokes systems. 
\begin{lemme}\label{lemme3}
Let $ x \in \R^3 \setminus B(\alpha,r)$, then we have
$$
\{ u(x),\exists \, f\in L^\infty(\R^3) \text{ compactly supported in } B(\alpha,r) \text{st } -\Delta u+\nabla p =f  \text{ and }   \div u= 0  \text{ on } \R^3\}=\R^3.
$$
\end{lemme}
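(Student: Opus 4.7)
The plan is a linearity-plus-duality argument relying on the explicit form of the Oseen tensor. First I would observe that, since any $f\in L^\infty(\R^3)$ supported in $B(\alpha,r)$ lies in $L^1\cap L^q$ for every $q\ge 1$, Proposition \ref{class-Stokes} provides a unique associated Stokes solution given by the convolution formulas
\begin{equation*}
u = \mathcal{U}\star f, \qquad p = \mathcal{P}\star f.
\end{equation*}
In particular the map $f\mapsto u(x)$ is linear, so the reachable set
\begin{equation*}
V := \bigl\{\,u(x) \,:\, f\in L^\infty(\R^3),\ \supp f\subset B(\alpha,r),\ u=\mathcal{U}\star f\,\bigr\}
\end{equation*}
is a linear subspace of $\R^3$. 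Therefore proving $V=\R^3$ reduces to proving that the only $v\in\R^3$ orthogonal to every element of $V$ is $v=0$.

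Second, I would convert this orthogonality condition into a pointwise identity. For any $v\in\R^3$, the symmetry of $\mathcal{U}$ yields
\begin{equation*}
v\cdot u(x) \;=\; \int_{B(\alpha,r)} \bigl(\mathcal{U}(x-y)\,v\bigr)\cdot f(y)\,dy.
\end{equation*}
If this integral vanishes for every $f\in L^\infty(B(\alpha,r);\R^3)$, and noting that $x\notin B(\alpha,r)$ makes $y\mapsto \mathcal{U}(x-y)\,v$ continuous on the open ball $B(\alpha,r)$, the fundamental lemma of the calculus of variations gives
\begin{equation*}
\mathcal{U}(x-y)\,v \;=\; 0 \qquad \text{for every } y\in B(\alpha,r).
\end{equation*}

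Third, I would conclude using the explicit form \eqref{Oseen} of the Oseen kernel. Fixing any $y$ in the open ball, $z:=x-y\ne 0$, and the matrix
\begin{equation*}
\mathcal{U}(z) \;=\; \frac{1}{8\pi|z|}\!\left(\Id + \frac{z\otimes z}{|z|^2}\right)
\end{equation*}
is symmetric and positive definite (its eigenvalues are $(8\pi|z|)^{-1}$ with multiplicity two, and $(4\pi|z|)^{-1}$ with multiplicity one), hence invertible. So $\mathcal{U}(x-y)\,v=0$ forces $v=0$, which is the desired contradiction and proves $V=\R^3$.

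The argument is short and I do not foresee a genuine obstacle: the only point of care is that Proposition \ref{class-Stokes} must be invoked to guarantee both the existence of a pressure $p$ making $(u,p)$ an admissible Stokes pair, and the validity of the convolution representation on which the duality identity rests. The invertibility of $\mathcal{U}(z)$ for $z\ne 0$ then does all the work, and no analytic continuation or refined unique-continuation statement is needed.
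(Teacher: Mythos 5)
Your proof is correct, and it takes a genuinely different route from the one in the paper. The paper argues \emph{constructively}: for an arbitrary target $a\in\R^3$ it writes down an explicit force
$F=8\pi|x-\alpha|\bigl(\Id-\tfrac12\tfrac{(x-\alpha)\otimes(x-\alpha)}{|x-\alpha|^2}\bigr)a$,
observes that the Stokeslet $u=\mathcal{U}(\cdot-\alpha)F$ with point source $\delta_\alpha F$ satisfies $u(x)=a$ by a direct matrix computation (this is exactly inverting the Oseen tensor at $x-\alpha$), mollifies $\delta_\alpha$ by $\chi_{B(\alpha,\varepsilon)}/|B(\alpha,\varepsilon)|$ to get an admissible $f\in L^\infty$ supported in $B(\alpha,r)$, and finally concludes by density plus the fact that the reachable set is a closed (finite-dimensional) subspace. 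You instead run a duality argument: the reachable set is a subspace of $\R^3$ by linearity, any $v$ orthogonal to it must satisfy $\int_{B(\alpha,r)}(\mathcal{U}(x-y)v)\cdot f(y)\,dy=0$ for all admissible $f$, whence $\mathcal{U}(x-y)v=0$ on the open ball by the fundamental lemma, and then $v=0$ from the positive-definiteness of $\mathcal{U}(z)$ for $z\neq0$. The same piece of linear algebra (invertibility of the Oseen matrix off the origin) does the essential work in both proofs, but your argument avoids the mollification/density step entirely, at the cost of being non-constructive — the paper's proof, in exchange, exhibits an explicit control force for each target velocity, which is in the spirit of a controllability result. One small point of care, which you handle correctly by invoking Proposition \ref{class-Stokes}: the convolution $u=\mathcal{U}\star f$ is indeed the unique decaying Stokes solution associated to $f$, so the duality identity is applied to the right object.
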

\begin{proof}[Proof of Lemma \ref{lemme3}]
 We set for a given $x \in \R^3 \setminus B(\alpha,r)$ and any $a \in \R^3$
$$
F:=8\pi|x-\alpha | \left(I-\frac{1}{2}\frac{(x-\alpha) \otimes (x-\alpha)}{|x-\alpha|^2} \right)a \in \R^3 ,
$$
we consider then the velocity given by $u=\mathcal{U}(\cdot-\alpha)F$
where we recall that $\mathcal{U} $ 
is the fundamental solution of the Stokes system in $\R^3$, see Section \ref{SSE}. We define as well the associated pressure as $p:=\mathcal{P}(\cdot-\alpha)F $.
Thus 
 $(u,p)$ satisfies $\div u= 0 $ and the steady Stokes equation  $-\Delta u+\nabla p =\delta_\alpha F $ on  on $\R^3$. 
 Moreover, 
 \begin{align*}
     u(x) =  \mathcal{U} (x-a)  F 
     = \frac{1}{8\pi} \left(\frac{I}{|x-\alpha|}+\frac{(x-\alpha) \otimes (x-\alpha)}{|x-a|^3} \right)  8\pi|x-\alpha| \left(I-\frac{1}{2}\frac{(x-\alpha) \otimes (x-\alpha)}{|x-\alpha|^2} \right)a = a .
 \end{align*}
In order to regularise the right hand side $\delta_{\alpha}F$ we set $\tilde{u} =\mathcal{U} \star \frac{\chi_{B(\delta,\varepsilon)}}{|B(\delta,\varepsilon)|}F$ with $\varepsilon<r$. One can show that $ |a-\tilde{u}(x)|=|u(x)-\tilde{u}(x)|\leq C_{x,\alpha} \varepsilon$ for $\varepsilon$ small enough and this shows that the set 
$$
\{ u(x),\exists \, f\in L^\infty(\R^3) \text{ compactly supported in } B(\alpha,r) \text{st } -\Delta u+\nabla p =f, \quad \div u= 0  \text{ on } \R^3\}
$$
is dense in $\R^3$, since it is a finite dimensional vector space and hence closed, we get the desired equality. 
\end{proof}

 In the sequel we will look for a solution $ (u,\rho)$ to the transport-Stokes system 
in $\R^3 \setminus \omega$ starting with  $\rho_0$  at time $0$, and exactly reaching the null state   at time $T$, whose main part is given by a rescaling of the auxiliary solution $u_\aux$, while the influence of the initial data will be considered as an error term. 
More precisely, let $\eps>0$, set 
$$u^\eps_\aux(t,x):=\frac{1}{\eps}u_\aux(\frac{t}{\eps},x) \quad \text{ and } \quad 
p^\eps_\aux(t,x):=\frac{1}{\eps} p_\aux(\frac{t}{\eps},x),
$$
which satisfy 
 \begin{equation}
\label{s*}
 -\Delta u^\eps_\aux +\nabla p^\eps_\aux = 0 \quad \text{ in } \quad \R^3 \setminus w, \quad
 \div u^\eps_\aux=0 \quad \text{ in } 
 \R^3 .
\end{equation}
Let us set accordingly
$$X_\eps(t,s,x):=X(t/\eps,s,x),$$
 the associated flow defined on $[0,\eps T]$.

 \subsection{Construction of an error term due to the initial density and reaching equilibrium} 
 This section is devoted to the proof of the following result, where, roughly speaking, it is proved that for $\eps >0$ small enough, one may construct an error term, due to the initial density, which  reaches equilibrium before the imparted time. 
\begin{prop}\label{propRn}
There exists $(u^* , \rho^*) $ in $C([0,+\infty),W^{2,p}(\R^3) \times  L_p(\R^3))$
so that, in  $\R^3$, for $\eps >0$ small enough, 
\begin{equation}
\label{ts*}
\left\{
\begin{array}{rcl}
&\partial_t \rho^* + \div ((u^\eps_\aux+u^*)\rho)=g^* ,\\
&-\Delta u^* + \nabla p = - \rho^* e_3 \quad \text{ and } \div u^*= 0  ,\text{ in } \R^3\\
&\rho^*(0,\cdot)=\rho_0 \quad \text{ and } \quad  \rho^*(\eps T ,\cdot)=0 .
\end{array}
\right.
\end{equation}
with 
 $g^*$ compactly supported in $(0,\eps T)\times \omega$.
\end{prop}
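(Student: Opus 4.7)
The plan is to use a return-method ansatz: smoothly kill the mass originating from each piece $B(x_i, r_i)$ of the covering of $\supp \rho_0$ during its dedicated window $[\eps t_{i+1/4}, \eps t_{i+1/2}]$, since by Proposition \ref{lemme1} the auxiliary flow maps $B(x_i, r_i)$ inside $B(0, 2\delta) \subset \omega$ throughout that window. The key reason why the return method works is the scaling: on $[0, \eps T]$ the auxiliary velocity $u_\aux^\eps$ has magnitude $1/\eps$ and produces order-one displacements, whereas $u^*$ is bounded by $C\|\rho_0\|_{L^1 \cap L^p}$ independently of $\eps$, so its cumulative displacement over $[0, \eps T]$ is $O(\eps)$ and becomes negligible compared to $\delta$ once $\eps$ is small enough.

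Concretely, I would first fix a smooth partition of unity subordinate to the cover $\bigcup_{i=1}^L B(x_i, r_i)$ and decompose $\rho_0 = \sum_{i=1}^L \rho_0^i$ with $\supp \rho_0^i \subset B(x_i, r_i)$ and $\rho_0^i \in L^1(\R^3) \cap L^p(\R^3)$, and pick smooth cutoffs $\eta_i : [0, \eps T] \to [0, 1]$ with $\eta_i \equiv 1$ on $[0, \eps t_{i+1/4}]$ and $\eta_i \equiv 0$ on $[\eps t_{i+1/2}, \eps T]$. Denoting by $Y^*$ the flow of $u_\aux^\eps + u^*$, the ansatz is
\begin{equation*}
\rho^*(t, \cdot) := \sum_{i=1}^L \eta_i(t) \, Y^*(t, 0, \cdot) \# \rho_0^i,
\end{equation*}
coupled with $-\Delta u^* + \nabla p^* = -\rho^* e_3$ and $\div u^* = 0$. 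This is a fixed-point problem for $u^*$: its existence in $C([0, \eps T], W^{2,p})$ would be established by an iteration scheme in the spirit of Section \ref{sec-proof-princi}, combined with stability in the Wasserstein distance as in Theorem \ref{thm}, using that the pushforwards preserve $L^1$ and $L^p$ norms and that $\eta_i \leq 1$ to obtain the uniform bound $\|\rho^*(t)\|_{L^1 \cap L^p} \leq \|\rho_0\|_{L^1 \cap L^p}$ along iterates; the smallness of $\eps$ yields the contraction on the time interval $[0, \eps T]$.

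Since each $t \mapsto Y^*(t, 0, \cdot) \# \rho_0^i$ solves the homogeneous transport equation with velocity $u_\aux^\eps + u^*$, the Leibniz rule directly gives
\begin{equation*}
\partial_t \rho^* + \div\bigl((u_\aux^\eps + u^*) \rho^*\bigr) = \sum_{i=1}^L \eta_i'(t) \, Y^*(t, 0, \cdot) \# \rho_0^i =: g^*.
\end{equation*}
The initial condition $\rho^*(0, \cdot) = \rho_0$ holds because $\eta_i(0) = 1$ and $\sum_i \rho_0^i = \rho_0$, and the terminal condition $\rho^*(\eps T, \cdot) = 0$ holds because $\eta_i(\eps T) = 0$ for every $i$; we extend $\rho^*, u^*, g^*$ by zero for $t > \eps T$ to get continuity on $[0, +\infty)$.

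The main obstacle is verifying that $\supp g^*(t, \cdot) \subset \omega$. Since $\eta_i'$ is supported in $[\eps t_{i+1/4}, \eps t_{i+1/2}]$, I need $Y^*(t, 0, B(x_i, r_i)) \subset \omega$ throughout that window. Proposition \ref{lemme1} gives $X_\aux^\eps(t, 0, B(x_i, r_i)) \subset B(0, 2\delta)$, so it suffices to control the difference $Y^* - X_\aux^\eps$. The Stokes estimate \eqref{reg_u,p} and Sobolev embedding give $\|u^*(s)\|_\infty \leq C\|\rho^*(s)\|_{L^1 \cap L^p} \leq C\|\rho_0\|_{L^1 \cap L^p}$, hence
\begin{equation*}
\sup_{t \in [0, \eps T]} \|Y^*(t, 0, \cdot) - X_\aux^\eps(t, 0, \cdot)\|_\infty \leq \int_0^{\eps T} \|u^*(s)\|_\infty \, ds \leq C \eps T \|\rho_0\|_{L^1 \cap L^p}.
\end{equation*}
Choosing $\eps$ small enough so this bound is below $\delta$, one obtains $Y^*(t, 0, B(x_i, r_i)) \subset B(0, 3\delta) \subset \omega$ throughout the window, and therefore $g^*$ is supported in $(0, \eps T) \times \omega$ as required.
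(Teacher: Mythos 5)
Your proposal is correct and follows essentially the same strategy as the paper: you decompose $\rho_0$ by a partition of unity on the covering of its support, introduce time cutoffs $\eta_i$ tied to the windows $]\eps t_{i+1/4}, \eps t_{i+1/2}[$, define $\rho^*$ as the sum of cutoff pushforwards by the flow of $u^\eps_\aux+u^*$ (matching the paper's formula $\rho^{n+1}(t,x)=\sum_i\beta_i(t)\rho_0(X^n_\eps(0,t,x))\chi_i(X^n_\eps(0,t,x))$), and control the support of $g^*$ by the $O(\eps)$ bound on $Y^*-X^\eps_\aux$, which is exactly the content of \eqref{formule2}. The only cosmetic difference is that you frame the self-referential construction as a contraction for small $\eps$ while the paper extracts the limit of the iterates via Aubin--Lions compactness, but both realize the same fixed-point.
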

 \begin{proof}[Proof of Proposition \ref{propRn}]
 We introduce the following sequence $(\rho^n,u^n)_{n \geq 0}$ defined on $[0,\eps T]\times \R^3$ such that 
\begin{equation}\label{formule1}
\|\rho^n\|_{L^1\cap L^p} \leq \|\rho_0\|_{L^1\cap L^p} , \|u^n\|_{W^{2,p}} \leq K  ,
\end{equation}
defined as follows.
\begin{itemize}
\item We set $(u^0,\rho^0):=(u_0,\rho_0)$ where $u_0$ the solution to the Stokes equation on $\R^3$ with a right hand side given by $-\rho_0e_3$.
\item Given $(u^n,\rho^n)$ satisfying \eqref{formule1}, we set $\rho^{n+1/2}$ as 
$$
\rho^{n+1/2}:=X_\epsilon^n \# \rho_0
$$
where $X^n_\varepsilon$ the characteristic flow associated to $u^\eps_\aux+u^n $. Then $\rho^{n+1/2}$ satisfies weakly
\begin{eqnarray*}
 \partial_t \rho^{n+1/2}+ \div ((u^\eps_\aux+u^n)\rho^{n+1/2})=0 , {\text{ on } \R^3 }\\
\rho^{n+1/2}(0,\cdot)=\rho_0.
\end{eqnarray*}
\item Using \eqref{forumule_controle_zone} together with \eqref{formule1}, there exists $\epsilon_0=C(T,K,\delta, \Lip(u_\aux))>0$ independent of $n$ such that for all $\varepsilon$ in $(0, \varepsilon_0)$,
\begin{equation}\label{formule2}
 \forall t \in [0,\eps T],\: \forall x\in B(x_i,r_i), \quad  t \in ]t_{i+1/4},t_{i+1/2}[ \Rightarrow
X^n_\eps(t,0,x)\in B(0,3 \delta).
\end{equation}
With the same notations as above, let $\{ \chi_i \}_{1 \leq i \leq L}$ a partition of unity such that $\supp \chi_i \subset B(x_i, r_i)$. 
We set then for $t\in[0,\eps T]$ and $x\in \R^3$, 
$$
\rho^{n+1}(t,x):= \underset{i=1}{\overset{L}{\sum}}\beta_i(t) \rho_0(X^n_\varepsilon(0,t,x)) \chi_i(X^n_\varepsilon(0,t,x))\\
=\underset{i=1}{\overset{L}{\sum}}\beta_i(t) \rho^{n+1/2}(t,x) \chi_i(X^n_\varepsilon(0,t,x)),
$$
with $\beta_i(t) :=\beta\left(8(L+1)(t-t_i) \right) $ with $\beta$ a cutoff function satisfying $\beta_i=1 $ for $t\leq 1/2$ and $\beta_i=0$ for $t\geq 1$.
We observe that 
$\rho^{n+1}$ satisfies weakly
\begin{equation} \label{n-den}
\partial_t \rho^{n+1}+ \div ((u^\eps_\aux+u^n)\rho^{n+1})=g^n \text{ on } \R^3 , \quad 
\rho^{n+1}(0,\cdot)=\rho_0 \quad \text{ and } \quad
\rho^{n+1}(\eps T ,\cdot)=0 ,
\end{equation}
with 
$$
g^n(t,x) := \underset{i=1}{\overset{L}{\sum}}\beta'_i(t) \rho_0(X^n_\varepsilon(0,t,x)) \chi_i(X^n_\varepsilon(0,t,x)) . $$
\item Finally we define $u^{n+1}$ as the unique solution to the Stokes equation:
\begin{equation} \label{n-sto}
-\Delta u^{n+1} + \nabla p^{n+1} =-\rho^{n+1}e_3 \quad \text{ and } \quad \div u^{n+1}= 0 \quad \text{ in } \quad \R^3.
\end{equation}
\end{itemize}

\begin{lemme}\label{lemmeRn}
The vector fields $g^n$ are compactly supported in $(0,\eps T)\times \omega$.
\end{lemme}
\begin{proof}[Proof of Lemma \ref{lemmeRn}]
 If $g^n(t,x)\neq 0$ there exists $1 \leq i \leq L$ such that  $\beta'_i(t) \rho_0(X^n_\varepsilon(0,t,x)) \chi_i(X^n_\varepsilon(0,t,x))\neq 0$ hence $t\in \eps ]t_{i+1/4},t_{i+1/2}[$ with $X_\eps^n(0,t,x)\in \supp \rho_0 \cap B(x_i, r_i)$ which yields $x \in X_\eps^n(t,0,B(x_i,r_i)) \subset \omega $ according to \eqref{formule2}.
\end{proof}


The sequences $\rho^n$ and $g^n$ satisfy the following uniform bounds for all $q\in [0,+\infty]$, 
$$
\|g^n\|_{L^q(0,\eps T;L^1\cap L^p)}+\|\rho^n\|_{L^q(0,\eps T;L^1\cap L^p)}+ \|\partial_t\rho^n\|_{L^q(0,\eps T;L^p+\dot{W}^{-1,p})} \leq M
$$
which yields for $u^n$ using the fact that $-\Delta \partial_t u^n+ \nabla \partial_t p^n= - \partial_t \rho^n e_3$, $\div \partial_t u^n=0$,
\begin{eqnarray*}
\|u^n\|_{L^q(0,\eps T;W^{2,p})}+ \|\partial_t u^n\|_{L^q(0,\eps T;\dot{W}^{1,p})} \leq M, & \text{ if } p>3,\\
\|u^n\|_{L^q(0,\eps T;W_{\loc}^{2,3})}
+ \|\partial_t u^n\|_{L^q(0,\eps T;\dot{W}^{-1,3})} \leq M, & \text{ if } p=3.
\end{eqnarray*}
Hence using the Aubin-Lions Theorem we get the convergence of a subsequence $u^n \to u^* $ in $L^q(0,\eps T;  C^{1,\mu}_{\loc}\cap W^{1,p}_{\loc} (\R^3))$  if $p>3$. This allows us to pass in the limit in the weak formulations of the Stokes equations \eqref{n-sto} and the transport equation \eqref{n-den}, using the weak convergence $\rho^n \rightharpoonup \rho^* $ and $g^n \rightharpoonup  g^* $ in $L^q(0,\eps T;L^p) $. For $p=3$ we get analogously $u^n \to u^* $ in $L^q(0,\eps T,W^{1,3}_{\loc}(\R^3)) $ together with $\rho^n \rightharpoonup  \rho^* $  weakly in $L^q(0,\eps T;L^s) $ for any $1 \leq s \leq 3$ which allows us to pass in the transport equation for $s=3/2$, so that, in  $\R^3$, \eqref{ts*} holds true. 

We emphasize that an Ascoli-type argument similar to the one used in the proof of Theorem 1.1 still holds true for the sequence of flows $X_\epsilon^n $ and we get $X^n_\epsilon(\cdot,s,\cdot) \to X^* (\cdot,s,\cdot) $ in $ C[0,\eps T], C^1_{\loc}) $ (resp. in $ C[0,\eps T], C_{\loc}) $) if $p>3$ (resp. $p=3$). 

This allows us to get a pointwise convergence of $g^n$ to 
$$g^* =\underset{i=1}{\overset{L}{\sum}} \partial_t \beta_i(t) \rho_0(X^*(0,t,x))\chi_i(X^*(0,t,x)),$$
in $(0,\eps T)\times \R^3$ and ensures that $g^*$ is compactly supported in $(0,\eps T)\times \omega$. 
Moreover, passing in the limit, pointwise, in the identity:
$$
\rho^{n+1}(t,x):= \underset{i=1}{\overset{L}{\sum}}\beta_i(t) \rho_0(X^n_\varepsilon(0,t,x)) \chi_i(X^n_\varepsilon(0,t,x)),
$$
we obtain that $\rho^* $ is in $  C(0,T; L^p) $ and the continuity for the velocity $u^*$ as well.
This concludes the proof of Proposition \ref{propRn}. 
\end{proof}


 \subsection{Conclusion of the proof of Theorem \ref{TS-control}} 

Combining  Proposition \ref{lemme1}
and 
Proposition \ref{lemmeRn}, in particular  by \eqref{s*} and \eqref{ts*}, and since $g^*$ is compactly supported in $(0,\eps T)\times \omega$, the vector field
 $(u:= u^\eps_\aux+u^*,\rho :=\rho^*)$ satisfies
\begin{gather*}
\partial_t \rho+ \div (\rho u)= 0  \quad \text{ in } \quad \R^3 \setminus \omega ,
\\  -\Delta u + \nabla  p = - \rho e_3  \quad \text{ in } \R^3 \setminus \omega ,\\
\div u = 0 
 \quad \text{ in } \quad \R^3 ,\\
\rho(0,\cdot)=\rho_0 \quad \text{ and } \quad 
\rho(T\eps ,\cdot)=0 .
\end{gather*}
Recalling the preliminary remark at the beginning of this section, this concludes the proof of Theorem \ref{TS-control}.

\smallskip 
\smallskip 
\smallskip 
\noindent

\subsection*{Acknowledgment}
A.M. and F.S. are supported by the SingFlows project, grant ANR-18-CE40-0027 of the French National Research Agency (ANR).
 This work was partly accomplished while F.S. was participating in a program hosted by the Mathematical Sciences Research Institute in Berkeley, California, during the Spring 2021 semester, and supported by the National Science Foundation under Grant No. DMS-1928930.



 \Addresses
 
\end{document}